\documentclass[vecarrow,chaprefs]{svmult} 

\usepackage{type1cm}        
%
\usepackage{graphicx}
\usepackage[bottom]{footmisc}

\usepackage{newtxtext}
\usepackage{newtxmath}
\usepackage{mathtools}


\newtheorem{assumption}[theorem]{Assumption}

\def\R{\mathbb{R}}  
\def\M{\mathcal{M}} 


\DeclareMathOperator{\Sq}{sq}
\DeclareMathOperator{\Sqrt}{sqrt}
\DeclareMathOperator{\Cov}{Cov}
\DeclareMathOperator{\Deriv}{D}

\DeclareMathOperator{\Dom}{dom}
\DeclareMathOperator{\Entropy}{\mathcal H}
\DeclareMathOperator{\Expectation}{\mathbb E} 
\DeclareMathOperator{\Grad}{grad}
\DeclareMathOperator{\Maxexp}{\mathcal E}
\DeclareMathOperator{\eDeriv}{D_{\text{e}}}
\DeclareMathOperator{\hDeriv}{D_0}
\DeclareMathOperator{\mDeriv}{D_{\text{m}}}

\newcommand{\KH}[2]{\operatorname{DH}\left(#1 \middle| #2 \right)}

\newcommand{\Derivby}[1]{\frac{\Deriv}{d#1}}
\newcommand{\KL}[2]{\operatorname{D}\left(#1 \middle| #2 \right)}
\newcommand{\acceleration}[1]{\overset{\star\star}{#1}}
\newcommand{\avalof}[1]{\left\vert#1\right\vert}
\newcommand{\coshtwo}{\cosh_2}
\newcommand{\covat}[3]{\Cov_{#1}\left(#2,#3\right)}

\newcommand{\derivby}[1]{\frac{d}{d#1}}

\newcommand{\displacement}{\operatorname{\mathbb S}}
\newcommand{\domof}[1]{\Dom\left(#1\right)}
\newcommand{\eDerivby}[1]{\frac{\eDeriv}{d#1}}

\newcommand{\entropyof}[1]{\Entropy\left(#1\right)}
\newcommand{\etransport}[2]{\prescript{\text{e}}{} {\mathbb U} _ {#1} ^ {#2}}
\newcommand{\htransport}[2]{\prescript{0}{} {\mathbb U} _ {#1} ^ {#2}}
\newcommand{\euler}{\mathrm{e}}
\newcommand{\expbundleat}[1]{S\!\maxexpat{#1}}
\newcommand{\expectat}[2]{{\Expectation}_{#1}\left[#2\right]}
\newcommand{\expfiberat}[2]{S_{#1}\maxexpat{#2}}

\newcommand{\expof}[1]{\exp\left(#1\right)}

\newcommand{\fullbundleat}[1]{\prescript{1}{}S^1\maxexpat{#1}}
\newcommand{\fullfiberat}[2]{\prescript{1}{}S^1_{#1}\maxexpat{#2}}
\newcommand{\gaussdensity}{\gamma}

\newcommand{\hDerivby}[1]{\frac{\hDeriv}{d#1}}

\newcommand{\mDerivby}[1]{\frac{\mDeriv}{d#1}}
\newcommand{\macc}{\prescript{m}{}\Deriv^2}
\newcommand{\maxexpat}[1]{\Maxexp\left(#1\right)}
\newcommand{\mixbundleat}[1]{{}^*\!S\!\maxexpat{#1}}
\newcommand{\mixfiberat}[2]{{}^*S_{#1}\maxexpat{#2}}

\newcommand{\mtransport}[2]{\prescript{\text{m}}{} {\mathbb U} _ {#1} ^ {#2}}

\newcommand{\normat}[2]{\left\Vert #2 \right\Vert_{#1}}
\newcommand{\normof}[1]{\left\Vert #1 \right\Vert}
\newcommand{\orliczof}[2]{L_{#1}\left(#2\right)}
\newcommand{\orliczpof}[3]{L_{#1}^{#2}\left(#3\right)}
\newcommand{\pderivby}[1]{\frac {\partial} {\partial #1}}
\newcommand{\probabilities}{\mathbb P}
\newcommand{\pstar}{\prescript{*}{}}

\newcommand{\scalarat}[3]{\left\langle #2 , #3 \right\rangle_{#1}}
\newcommand{\scalarof}[2]{\left\langle#1,#2\right\rangle}
\newcommand{\setof}[2]{\left\{ #1 \,\middle|\, #2 \right\}}
\newcommand{\set}[1]{\left\{ #1 \right\}}
\newcommand{\smeasures}{\mathbb M}
\newcommand{\tensorat}[3]{\prescript{#1}{}S^{#2}\maxexpat{#3}}

\newcommand{\transport}[2]{{\mathbb U}_{#1}^{#2}}
\newcommand{\tricovat}[4]{\Cov_{#1}\left(#2,#3,#4\right)}
\newcommand{\tvnorm}[1]{\normat {\operatorname{TM}} {#1}}
\newcommand{\vectorat}[2]{S_{#1}(#2)}
\newcommand{\velocity}[1]{\overset{\star}{#1}}

\usepackage{bm}
\usepackage{url}

\AtBeginDocument{%
}

\begin{document}


\title*{Dually affine Information Geometry modeled on a Banach space}

\titlerunning{Affine statistical bundles}

\author{Goffredo Chirco and Giovanni Pistone}
\authorrunning{G. Chirco, G. Pistone}
\institute{Goffredo Chirco \at Dipartimento di Fisica ``Ettore Pancini'', Universit\`a degli Studi di Napoli Federico II, Complesso Universitario di Monte Sant'Angelo, Via Cinthia, 21, 80126 Napoli, Italy  \\
\email{goffredo.chirco@unina.it}
\and Giovanni Pistone \at de Castro Statistics, Collegio Carlo Alberto, piazza Vincenzo Arbarello 8, 10122  Torino, Italy \email{giovanni.pistone@carloalberto.org}}

\maketitle

\abstract{In this review paper, we present dually affine Information Geometry in terms of the mathematical structure of a statistical bundle $SM$, that is, a set of couples $(q,u)$, where the probability density $q$ belongs to an affine space $M$ and $u$ is a random variable with $\expectat q u  = 0$.}

\section{Overview}

Information Geometry (IG) is one of the modern outcomes of various lines of research. The first non-parametric version of IG dates back to the work of J. W. Gibbs. In his 1902 monograph titled \emph{Elementary Principles in Statistical Mechanics}, Gibbs presented an innovative approach using the ``theory of error'' to describe uncertainty in mechanical systems. His program deploys new concepts intended to describe the time evolution of probability distributions. We review these concepts below in contemporary mathematical language.

Strictly positive probability densities describe uncertainty in some measurable space. Because of the strict positivity, any such density $q$ is of exponential form
\begin{equation*}
    q = \euler^{-v} \quad \text{where} \quad v = \log \frac 1 q \ .
\end{equation*}

The logarithmic expression is related to the Boltzmann entropy, while the exponential expression refers to the Maxwell distribution of densities. Gibbs assumes $v$ to be bounded below and rewrites as
\begin{equation*}
    q = \euler^{c - w} \quad \text{where $c$ is a constant and $\inf w = 0$} \ .
\end{equation*}

The next step is the mechanical interpretation of $w$, the computation of its mean value $\expectat q w$, and the variance expression as $w - \expectat q w$. In conclusion, Gibbs obtains the key equation
\begin{equation*}
    q = \euler^{u - H(q)}, \text{where $\expectat q u = 0$, hence $H(q) = - \expectat q {\log q}$.}
\end{equation*}

The expression above has many consequences. First, the "typical value" of $u$ is 0. Hence, the typical value of $- \log q$ is $H(q)$. This observation is the starting point of the applications of this formalism to Information Theory made by C. E. Shannon in his \emph{A Mathematical Theory of Communication} (1948). Second, the random variable $u$ is uniquely defined and, in turn, uniquely defines the probability density:
\begin{equation*}
u = \log q - \int q \log q \quad \text{and} \quad q = \frac{\euler^u}{\int \euler^u} = \euler^{u - \log \int \euler^u} \ .
\end{equation*}
Third, an interesting time evolution $t \mapsto q(t)$ is given by a linear model in $u$ as 
\begin{equation*}
    t \mapsto \euler^{tu - \log \int \euler^{tu}} \ .
\end{equation*}

The last equation is a case of what was subsequently called an exponential family. See the monograph  \cite{brown:86}. An exponential family is a family of probability densities of the form
\begin{equation*}
    q = \exp\left(\sum_{j=1}^d \theta_j u_j - \psi(\theta)\right) \cdot p \ .
\end{equation*}
If the random variables $u_1,\dots,u_d$ are affinely independent, then the map $q \mapsto \theta$ is well defined and provides an example of a chart in the geometrical sense of the term. This opens the way to the idea of considering statistical models as manifolds. 

R. A. Fisher has provided a further ingredient in his ``Mathematical Foundations of Theoretical Statistics'' (1922). He considers a statistical model $\theta \mapsto q_\theta$ versus a reference probability density $p$ and studies the likelihood $q_\theta/p$ in the logarithmic scale $\theta \mapsto \log q_\theta - \log p$. In such a scale, the velocity of variation is $\derivby \theta \log q_\theta = \dot q_\theta/q_\theta$ and is named Fisher's score, while its squared intensity is the variance 
\begin{equation*}
    \expectat{q_\theta}{\left(\frac{\dot q_\theta}{q_\theta}\right)^2} = \int \frac{\dot q_\theta ^2}{q_\theta} \ ,
\end{equation*}
which is named Fisher's Information. It is relevant to note that $\expectat {q_\theta}{\derivby \theta \log q_\theta} = 0$.

The use of differential geometry in the study of statistical models was first devised by C.R. Rao \cite{rao:45} who recognized that the Information matrix, that is, the variance matrix of $\nabla \log q_\theta$ defines a Riemannian manifold on the model parameters. It was later recognized by B. Efron \cite{efron:1975,efron:1978}  that the affine geometry of the exponential families provides a more interesting geometric setup that connects with such fundamental topics as entropy, information, and Gibbs distribution. See also \cite{dawid:75,dawid:1977as}. It was the work of \v Cenkov \cite{cenkov:1982} and Amari \cite{amari:85} to connect the various branches in the topic we call Information Geometry.

 The modern form of this line of research is due to A. Amari and I. Nagaoka; see the monograph \cite{amari|nagaoka:2000}. Their presentation of the affine geometry of statistical models depends on the standard formalism of differential manifolds of finite dimension. We present here a version that holds in a non-parametric case, as it was the original approach by Gibbs.

\section{Non-parametric manifold}
\label{sec:non-param-manif}

 This section concisely presents those parts of differential geometry that are useful in our context. We specifically chose to avoid the use of parametric statistical models and refer systematically to statistically meaningful natural operations. For this reason, geometric notions such as the tangent space of the manifold and parallel transports are dealt with peculiarly. Due to the specific issues of the non-parametric setup, we adopt Bourbaki's definition of a differentiable manifold. For non-parametric differential geometry, see \cite[sec.~5]{bourbaki:71variete}, \cite[Ch~II]{lang:1995}, and \cite[Ch~3]{abraham|marsden|ratiu:1988}. 
 In particular, sec.~\ref{subsec:generalities} defines the notions of differentiable manifold and vector bundle.

 \subsection{Generalities} \label{subsec:generalities}
A \emph{chart} on a set $M$ is a triple $(s,U,B)$, where $s$ is a 1-to-1 mapping from its domain $U \subset M$ to an \emph{open} subset $s(U)$ of a topological vector space $B$. The space $B$ is called coordinates' space or model space. A \emph{topological vector space} is a (real) vector space endowed with a topology such that all the vector space operations are continuous, see \cite[Ch~2 \S1]{lang:1995}. We will call top linear mappings all continuous linear mappings between topological linear spaces. Normed vector spaces and Banach spaces are instances of topological vector spaces, but there are other instances of interest. In our applications, the topological vector spaces of interest will be Banach spaces of random variables. We are not going to use any especially sophisticated notion from Functional Analysis. For basic reference, see \cite[Ch.~2]{abraham|marsden|ratiu:1988} or \cite{rudin:1987-3rd}. In general, the definitions below are a specialization of those in \cite{lang:1995}. The monograph \cite[Ch.~10]{kass|vos:1997} was the first to mention the specific interpretation of IG we adopt.

The manifold structure consists of a given set of compatible charts. We do not assume that the model space $B$ is finite-dimensional, nor is it the same for each chart.

\begin{definition}[$C^k$-atlas]\label{def:manifold} Let $M$ be a set and let $B_\alpha$, $\alpha \in A$, be a family of Banach spaces. For each $\alpha \in A$, $(s_\alpha,U_\alpha,B_\alpha)$ is a \emph{chart} on $M$, that is, $s_\alpha \colon U_\alpha \to B_\alpha$ is 1-to-1 from $U_\alpha \subset M$ to the open set $s_\alpha(U_\alpha) \subset B_\alpha$.  We assume that each couple of charts, say $(s_\alpha,U_\alpha,B_\alpha)$ and $(s_\beta,U_\beta,B_\beta)$, either have disjoint domains, $U_\alpha \cap U_\beta = \emptyset$, or the \emph{change of chart} (or transition mapping)
  \begin{equation} \label{eq:manifold}
    s_{\beta} \circ s^{-1}_{\alpha} \colon s_\alpha(U_\alpha \cap U_\beta) \to s_\beta(U_\alpha \cap U_\beta) 
  \end{equation}
  is a 1-to-1 $C^k$ mapping, $k \geq 0$, between open sets. The set $\mathcal A$ of all charts is a $C^k-$\emph{atlas}.
\end{definition}

In the definition above, for each couple of overlapping domains, there is a homeomorphism between open sets of the corresponding model spaces, say $B_1, B_2$. If $k \geq 1$, the derivative $d(s_2 \circ s_1^{-1})$ of change of chart provides a 1-to-1 continuous linear mappings of $B_1$ onto $B_2$. Hence, all the model spaces of overlapping domains are toplinear isomorphic. This remark is critical for model building in the infinite-dimensional case, where the equality of the respective dimensions does not ensure such an isomorphism.

\begin{definition}[$C^k$-manifold] Two atlases are equivalent if their union is again an atlas. A class of equivalent atlases on the set $M$ is a \emph{manifold} $\M$. 
\end{definition}

From now on, we assume our manifold to be differentiable, that is, $k \geq 1$.

\begin{definition}[Velocity in a chart]
If $s \colon U \to B$ is a chart on the set $M$, and $t \mapsto x(t) \in s(U) \subset B$ is a curve, then $t \mapsto s(x(t))$ is the \emph{expression} of the curve in the chart $s$ and $\derivby t s(x(t)) \in B$ is the \emph{expression of the velocity} of the curve in the chart $s$.
\end{definition}

 If the curve has values in the intersection of the domains of two charts $s_1$ and $s_2$, the expression of the velocity in the chart $s_1$ is mapped to the expression of the velocity in the chart $s_2$ by the derivative of the change of chart,
\begin{equation*}
  \derivby t s_2(x(t)) = \derivby t s_2 \circ s_1^{-1} (s_1(x(t)) = d \left(s_2 \circ s_1^{-1}\right) \left[\derivby t  s_1(x(t))\right] \ .
\end{equation*}

In particular, we are interested in atlases with one distinguished chart for each point of $M$.

\begin{definition}[Frame bundle]\label{def:frame-bundle}
A \emph{frame bundle} for the $C^k$-manifold $\M$ is a defining atlas $(U_x,s_x,B_x)$, $x \in M$, such that $s_x(x) = 0$ together with a system of toplinear isomorphism $\transport x y \colon B_x \to B_y$ such that $\transport y x \transport x y = \transport x x$ is the identity, $x,y \in M$. We assume that the vector bundle $N = \setof{(x,v)}{x \in M, v \in B_x}$ is a $C^k$-manifold for the charts
\begin{equation}
    S_x \colon (y,w) \mapsto (s_x(y), \transport y x w) \in B_x \times B_x \ .
\end{equation}The isomorphisms $\transport x y$ are the \emph{parallel transport} of the frame bundle. 
\end{definition}

Different parallel transports can exist on the same atlas.  Some parallel transports have the \emph{cocycle property}, $\transport y z \transport x y = \transport x z$.

The frame bundle provides a \emph{moving frame}, a smooth system of charts associated with each reference point.

\begin{definition}[Velocity and auto-parallel curves]\label{def:velocity-parallel}
 Let $(M,(s_x)_{x \in M},(B_x)_{x \in M},(\transport x y)_{x,y \in M})$ is a given frame bundle according to def.~\ref{def:frame-bundle}. If $t \mapsto x(t) \in M$ is a smooth curve its \emph{velocity} is 
\begin{equation}\label{eq:velocity-general}
\velocity x(t) = \left. \derivby t s_x(x(t)) \right|_{x = x(t)} \ . 
\end{equation}
The curve is \emph{auto-parallel} if
\begin{equation}
    \velocity x(t) = \transport {x(s)} {x(t)} \velocity x(s) \ .
\end{equation}
\end{definition}

\begin{remark}
Many models in IG involve the use of a distance or divergence. Riemannian manifolds have a distance induced by the metric and a notion of geodesic, that is, a curve between two points of minimal length. However, affine manifolds do not have a distance, and the notion of an auto-parallel curve replaces the notion of geodesic. Geodesics or auto-parallel curves provide a notion of a ``straight line'', hence a geometry. Moreover, we shall see that the computation of the velocity of the lifted curve $t \mapsto (x(t), \velocity x(t))$ defines a notion of acceleration so that auto-parallel curves have zero acceleration.
\end{remark}

In sec.~\ref{subsec:L2-shere-as-a manifold} below, we introduce the first example of differential geometric construction in statistics. The original form of the argument is the classical Hellinger's divergence between probability densities,
\begin{equation*}
    \frac{1}{2} \int \left(\sqrt q - \sqrt p \right)^2 = 1 - \int \sqrt {pq} \ .
\end{equation*}
In Hellinger divergence, each density maps into its square root, a random variable belonging to the unit sphere of the $ L^2$ space, and half the squared $ L^2$-norm provides a measure of the divergence.

\subsection{The unit sphere of $L^2(\mu)$ as a manifold}
\label{subsec:L2-shere-as-a manifold}
Let $(\Omega,\mathcal F,\mu)$ be a probability space. We refer to \cite{malliavin:1995} for background material in measure theory. The unit sphere of the Hilbert space $L^2(\mu)$ with norm $\normof \rho ^2 = \int \avalof \rho ^2 \ d\mu$ and inner product $\scalarof u v = \int uv \, d\mu$, is our base set,
\begin{equation}
    \label{eq:unit-sphere}
    M = \setof{\rho \in L^2(\mu)}{ \normof \rho = 1} \ .
    \end{equation}

The unit shere is a sub-manifold of $L^2(\mu)$ on general principles; see, for example, \cite[II, \S2]{lang:1995}. Here, we prefer to define our charts directly to obtain a frame bundle. If $p$ is a probability density, then $\sqrt p \in M$. This remark provides a 1-to-1 mapping from probability densities into $M$, which suggests a way to transfer the sphere's geometry to the set of probability densities. See a more general case in \cite{ay|jost|le|schwachhofer:2017}. The geometry of unit spheres of $L^p$ spaces, $p > 1$, is discussed in \cite{gibilisco:2020}.

\subsubsection{Orthogonal projection}
We construct a frame bundle according to def.~\ref{def:frame-bundle} by orthogonally projecting the unit spere $M$ to its tangent hyperplanes.  For each $\alpha \in M$, the chart's domain is the open half-sphere $U_\alpha = \setof{\rho \in M}{\scalarof \rho \alpha > 0}$ and the coordinates' space is the space of random variables in $L^2(\mu)$ which are orthogonal to $\alpha$, $B_\alpha = \setof{u \in L^2(\mu)}{\scalarof u \alpha = 0}$. The chart centred at $\alpha$, and its inverse are, respectively,
\begin{gather}
  s_\alpha \colon U_\alpha \ni \rho \mapsto \rho-\scalarof{\rho}{\alpha} \alpha = u \in B_\alpha \ , \label{eq:op-chart} \\
  s_\alpha^{-1} \colon S_\alpha \ni u \mapsto u + \sqrt{1 - \normof u ^2} \alpha = \rho  \in U_\alpha \ , \label{eq:inverse-op-chart}
\end{gather}
with chart domain $s_\alpha(U_\alpha) = \setof{u \in B_\alpha}{\normof u < 1}$. Eq.~\eqref{eq:inverse-op-chart} follows from eq.~\eqref{eq:op-chart} by observing that $\normof u ^2 = 1 - \scalarof \alpha \rho ^2$.

The change of chart and its derivative in the direction $h$ are, respectively,
\begin{gather}
  \label{eq:1}
  s_\beta \circ s_\alpha^{-1}(u) = (u - \scalarof u \beta \beta) + \sqrt{1 - \normof u ^2} (\alpha - \scalarof \alpha \beta \beta) \ , \\
  d s_\beta \circ s_\alpha^{-1}(u)[h] = (h - \scalarof h \beta \beta) - \frac{\scalarof h u}{\sqrt{1 - \normof u ^2}} (\alpha - \scalarof \alpha \beta \beta) \ . 
\end{gather}
Note that we use the notation $df(u)[k]$ to denote the derivative of the function $f$ in the direction $k$. Alternative notations are $D_hf(u)$, $Df(u) \cdot k$, $f'(u) k$. For example, the notation with square brackets is used in \cite{absil|mahony|sepulchre:2008}. 

The change-of-chart map depends non-linearly on $u$ in the term $\sqrt{1 - \normof u^2}$, which is infinitely differentiable on the given domain. Hence, our atlas defines a $C^k$-manifold for all $k$.

Especially we can define a linearization of the change-of-chart by computing the derivative at $u = 0$,
\begin{gather}
  d s_\beta \circ s_\alpha^{-1}(0) \colon B_\alpha \ni h \mapsto  d s_\beta \circ s_\alpha^{-1}(0)[h] = h - \scalarof h \beta \beta \in B_\beta \ , \\
    \left(d s_\beta \circ s_\alpha^{-1}(0)\right)^{-1} \colon B_\beta \ni   k \mapsto  k - \frac {\scalarof k \alpha}{\scalarof \beta \alpha} \beta \in B_\alpha \ . \end{gather}
  Such mapping is a topological vector space isomorphism of the Hilbert spaces $B_\alpha$ and $B_\beta$.
  
  We now look for a natural parallel transport, one which is a Hilbert isometry. For each $\alpha,\beta \in M$, define
  \begin{equation}
    \label{eq:sphere-transport}
    \transport \alpha \beta \colon B_\alpha \ni v \mapsto v - (1 + \scalarof \alpha \beta)^{-1}\scalarof \beta v (\alpha+\beta) \ .
  \end{equation}

By noting that $\normof{\alpha+\beta}^2 = 2(1 + \scalarof \alpha \beta)$, one easily verifies that
\begin{enumerate}
\item $\transport \alpha \beta \colon B_\alpha \to B_\beta$;
\item $\transport \beta \alpha = \left(\transport \alpha \beta\right)^{-1}$ and $\transport \alpha \alpha = \operatorname{Id}$;
\item $\normof {\transport \alpha \beta v} ^2 = \normof v ^2$.
\end{enumerate}
  
The velocity of a curve $t \mapsto \rho(t) \in M$ in the chart centred at $\alpha$ is
\begin{equation}
  \label{eq:2}
  \derivby t s_\alpha(\rho(t)) = \derivby t \left( \rho(t) - \scalarof {\rho(t)} {\alpha} \alpha \right) = \dot \rho(t) - \scalarof {\dot\rho(t)} \alpha \alpha \ , 
\end{equation}
where $\dot \rho$ is the derivative computed in $L^2(\mu)$. The velocity in the moving chart, that is at $\alpha = \rho(t)$, is
\begin{equation}
  \label{eq:3}
  \velocity \rho(t) = \left. \derivby t s_\alpha(\rho(t)) \right|_{\alpha=\rho(t)} =  \dot \rho(t) - \scalarof {\dot\rho(t)} {\rho(t)} \rho(t) = \dot \rho(t) \ .
\end{equation}

We use the family $\left(\transport \alpha \beta\right)_{\alpha,\beta}$ as a system of parallel transports, meaning that the vector $w \in B_\beta$ is parallel to the vector $v \in B_\alpha$ if $w = \transport \alpha \beta v$. The curve $t \mapsto \rho(t)$ is auto-parallel if 
\begin{equation}
  \label{eq:7}
  \transport {\rho(s)}{\rho(t)} \velocity \rho(s) = \velocity \rho(t) \ .
\end{equation}

The auto-parallel curve such that $\rho(0) = \rho$ and $\velocity \rho(0) = v$ satisfies the differential equation
\begin{equation}
  \label{eq:9}
  \dot \rho(t) = v - (1+ \scalarof \rho {\rho(t)})^{-1} (\rho+\rho(t)) \scalarof {\rho(t)} v \ .
\end{equation}

Let us show that
\begin{equation}
  \label{eq:riemannian-geodesic}
\rho(t) = \rho \cos \normof v t + \normof v ^{-1} v \sin \normof v t  
\end{equation}
is an auto-parallel curve with $\rho(0) = \rho$ and $\velocity \rho(t) = v$. The velocity is
\begin{equation}
  \label{eq:19}
  \velocity \rho(t) = - \rho \normof v \sin \normof v t + v \cos \normof v t \ .
\end{equation}

The equations $\scalarof {\rho(t)}\rho=\cos \normof v t$ and $\scalarof {\rho(t)} v = \normof v \sin \normof v t$ follow from eq.~\eqref{eq:riemannian-geodesic}. A simple computation shows that it satisfies eq.~\eqref{eq:9}.

Conversely, the auto-parallel \eqref{eq:riemannian-geodesic} connecting $\rho_0 = \rho(0)$ to $\rho(1) = \rho_1$ satisfies 
\begin{equation*}
\rho_1 = \rho_0 \cos \normof {\velocity \rho(0)} + \normof {\velocity \rho(0)} ^{-1} \velocity \rho(0) \sin \normof {\velocity \rho(0)} \ ,  
\end{equation*}
hence $v = \velocity \rho(0)$ is
\begin{equation}\label{eq:18}
  v = \normof v \left(1 - \normof v ^2\right)^{-1/2}(\rho_1 - \scalarof {\rho_1}{\rho_0} \rho_0) \quad \ , \quad \normof v = \arccos \scalarof {\rho_0}{\rho_1} \ . 
\end{equation}

\begin{remark}
  The auto-parallel curves \eqref{eq:riemannian-geodesic} are geodesics; that is, they have a minimal  squared length
  \begin{equation}
    \label{eq:20}
    \int_0^1 \normat {L^2(\mu)} {\velocity \rho(t)} ^ 2 dt \ .
  \end{equation}
  We do not discuss the metric properties of the geometry here because we focus on the dually affine geometry. For the metric properties, we refer to general treatises on Riemannian geometry and geometric analysis, such as \cite{lang:1995}, \cite{klingenberg:1995}, and \cite{jost:2017textbook}.
  \end{remark}

\begin{remark}\label{rem:other-projections}
The classical device of stereographic projection produces the same manifold as the orthogonal projection, but the atlas differs. For each reference point $\alpha \in M$, the chart's domain is $U_\alpha = M \setminus \set{-\alpha}$ and the coordinates' space is $B_\alpha = \setof{u \in L^2(\mu)}{\scalarat \mu u \alpha = 0}$. The chart centered at $\alpha$, $s_\alpha$, maps each $\rho \in M$ to the unique point of the tangent affine hyperplane at $\alpha$ --- identified to $B_\alpha$ by taking $\alpha$ itself as the origin --- which aligns with $-\alpha$ and $\rho$. The equations for $s_\alpha$ and its inverse are, respectively,
  \begin{gather*}
    s_\alpha \colon U_\alpha \ni \rho \mapsto \frac 2 {1+\scalarof \alpha \rho} \rho - \frac {2\scalarof \alpha \rho} {1+\scalarof \alpha \rho} \alpha
    = s_\alpha(\rho) \in B_\alpha \ , \\
    s^{-1}_\alpha \colon B_\alpha \ni u \mapsto \frac 1 {1 + \normof{u/2}^2}  u + \frac{1 - \normof{u/2} ^2}{ 1+ \normof{u/2}^2} \alpha = s^{-1}_\alpha(u) \in U_\alpha \ .
  \end{gather*}

   Projection from the origin gives the atlas
   \begin{gather*}
       \label{eq:origin}
       s_\alpha(\rho) = \scalarof \rho \alpha ^{-1} \rho - \alpha \\
       s_\alpha^{-1}(v) = \left(1+\normof u ^2)\right)^{-1/2}(u + \alpha)
   \end{gather*}
  \end{remark}

\subsection{Square root  embedding}

Unit sphere geometry has been one of the first attempts to give a geometrical structure to the set of probability densities via the square root embedding $p \mapsto \sqrt p$. The argument applies to probability measures
\cite{kakutani:1948}. See the monograph \cite{ay|jost|le|schwachhofer:2017} for a presentation of IG along these lines. The $L^p$ case is discussed in \cite{gibilisco:2020}.

The sphere $M$ of $L^2(\mu)$ maps onto the set of probability densities by the square function, $\Sq \colon M \ni \rho \mapsto \rho^2 \in L^1(\mu)$. In fact, $p = \rho^2$ is non-negative and has $\int p \, d\mu = \int \rho^2 \, d\mu = 1$. Conversely, given any probability density $p \in P(\mu)$, it holds $\sqrt p \in M$. Unfortunately, this transformation is not smooth unless the probability space has a finite number of atoms. In the infinite case, the set $P(\mu) \subset L^1(\mu)$ and the set of non-negative points on the sphere have an empty interior, so it is impossible to restrict the transformation to a homeomorphism. In other words, the square mapping is smooth but not locally invertible. This obstruction leaves two options: using the full sphere manifold or restricting the manifold with a model vector space with a stronger topology. 

We can rephrase the argument above in the language of manifolds. The expression of $\Sq$ in the orthogonal projection atlas is
\begin{equation}
  \label{eq:combe-presentation}
  \Sq_\alpha = \Sq \circ s_\alpha^{-1} \colon u \mapsto \left(u + \sqrt{1 - \normof u ^2}\alpha\right)^2 \ , 
\end{equation}
with derivative in the direction $h$
\begin{equation}
  \label{eq:22}
  d_h \Sq_\alpha(u) = 2 s_\alpha^{-1}(u) d_h s_\alpha^{-1}(u) \ .
\end{equation}
If $u=0$, the derivative is $h \mapsto \alpha h$, and the conditions for the inverse theorem do not hold but in the finite case. The presentation of probability densities in eq.~\eqref{eq:combe-presentation} appeared first in \cite{burdet|combe|nencka:2001}in the case $\alpha=1$. 

On the positive side, every statistical model $\theta \mapsto p_\theta$, with $p_\theta \cdot \mu$ equivalent to $\mu$ (that is, $p_\theta > 0$ $\mu$-a.s.) maps to a curve $\theta \mapsto \rho(\theta) = \sqrt{p_\theta}$ and studied as a smooth curve on the unit sphere.

Let us introduce a new equivalent presentation of the Hilbert bundle $(B_\alpha)$ restricted to $\alpha = \sqrt p$ with $p$ in the set of positive probability densities $P_>(\mu)$. For each $p \in P_>(\mu)$, let us write, for short, the square root of the chart in eq.~\eqref{eq:op-chart} as
\begin{multline}
  \label{eq:24}
  s_p \circ \Sqrt \colon P_>(\mu) \ni q \mapsto \sqrt q \mapsto s_{\sqrt p}(\sqrt q) = \sqrt q - \sqrt p \int \sqrt {pq} \, d\mu = \\
  \left(\sqrt\frac q p - \int \sqrt \frac q p \, p d\mu\right) \, \sqrt p = \left(\sqrt\frac q p - \expectat p {\sqrt\frac q p}\right) \, \sqrt p \in B_{\sqrt p} \ .
\end{multline}
The mapping is well defined because $\scalarof {\sqrt p}{\sqrt q} = \int \sqrt {pq} \, d\mu > 0$. In Fisher's statistical terms, the quantity in brackets is the centered square root transformation of the likelihood $q / p$.

To find an even better presentation, we change the Hilbert spaces. The 1-to-1 mapping 
\begin{equation}
  \label{eq:twice-isometry}
  J_{\sqrt p} \colon B_{\sqrt p} \ni u \mapsto \frac2{\sqrt p} u \in L^2_0(p \cdot \mu)  
\end{equation}
is twice an isometry,
\begin{gather}
  \label{eq:12}
  \int \frac2{\sqrt p} u \, p d\mu = 2 \int u \sqrt p\, d\mu = 0 \ , \\
  \int \left(\frac2{\sqrt p} u\right)^2 p d\mu = 4 \int u^2 \, d\mu = 4 \normof u ^2 \ .
\end{gather}

We have so introduced as a frame bundle the Hilbert bundle introduced by \cite[sec.~5]{amari:87}, \cite{amari:87dual}, and \cite[sec.~10.3]{kass|vos:1997}. Simple computations define the Hilbert bundle, the charts, and their inverse,
\begin{gather}
  \label{eq:Hilbert-bundle}
  H P_>(\mu) = \setof {(q,u)}{q \in P_>(\mu), u \in L^2(\mu), \expectat p u = 0} \ , \\
\label{eq:Hilbert-chart}
  s_p(q) = J_{\sqrt p} \circ s_{\sqrt p}(\sqrt q) = 2\left(\sqrt\frac q p - \expectat p {\sqrt\frac q p} \right) \ , \\
  \label{eq:Hilbert-inverse-chart}
  s_p^{-1}(v) = \left(\frac 12 v + \sqrt{1 - \normat p {\frac v 2} ^2} \right)^2 \cdot p \ .
\end{gather}

On the Hilbert bundle~\eqref{eq:Hilbert-bundle}, we have defined the pseudo-charts~\eqref{eq:Hilbert-chart} that are re-parameterized restrictions of the orthogonal projection charts defined on the unit sphere. Moreover, we can construct the Hilbert transports
\begin{equation}
  \label{eq:h-transport}
  \htransport p q = J_{\sqrt q} \circ \transport {\sqrt p} {\sqrt q} \circ J_{\sqrt p}^{-1} \colon L^2_0(p \cdot \mu) \to L^2_0(q \cdot \mu) \ ,
\end{equation}
that is, from eq.~\eqref{eq:sphere-transport},
\begin{multline}
  \label{eq:28}
  \htransport p q u = \frac 2 {\sqrt p} \left(\frac {\sqrt p} 2 u - (1 + \scalarof {\sqrt p} {\sqrt q})^{-1}\scalarof {\sqrt q} {\frac {\sqrt p} 2 u} ({\sqrt p} + {\sqrt q}) \right) = \\
u - \left(1 + \expectat p {\sqrt\frac q p} \right)^{-1} \scalarat p {\sqrt\frac q p} u \left(1 + \sqrt\frac q p\right) \ . \end{multline}
One can easily check by computation that eq.~\eqref{eq:h-transport} holds and $\htransport p q$ is isometric. Cf. the direct computation in \cite{pistone:2020-NPCS}.

Let $t \mapsto q(t) \in P_>(\mu)$ be such that $t \mapsto \sqrt {q(t)} \in M$ is smooth. Then, the velocity in the chart at $p$ is
\begin{equation}
  \label{eq:15}
\derivby t s_p(q(t)) =    
  \frac{\dot q(t)}{\sqrt{pq(t)}} - \expectat p {\frac{\dot q(t)}{\sqrt{pq(t)}}} \ ,
    \end{equation}
    while the velocity in the moving frame $p = q(t)$ is
    \begin{equation}   \label{eq:velocity}
   \velocity q(t) =   \frac {\dot q(t)}{q(t)} - \expectat {q(t)} {\frac {\dot q(t)}{q(t)}} = \frac {\dot q(t)}{q(t)} \ .
    \end{equation}

    The quantity $\velocity q(t) = \derivby t \log q(t)$ is called the Fisher score and is a natural way to score the rate of change in statistical models. The natural way to compute the intensity of change is the variance of the velocity,
    \begin{equation}
      \label{eq:17}
    \normat {q(t) \cdot \mu} {{\velocity q(t)}^2} = \expectat {q(t)} {{\velocity q(t)}^2} = \int \frac {{\dot q(t)}^2}{q(t)} \, d\mu \ ,
    \end{equation}
    which is called Fisher's information.

    The auto-parallel curve starting at $p(0) = p$ with initial velocity $\velocity p(0) = v$ is
    \begin{equation}
      \label{eq:29}
      t \mapsto p(t) = \left(\sqrt p \cos\left(\normat p v \frac t 2\right) + 2 v \sin\left(\normat p v \frac t 2\right) \right)^2 \ .
    \end{equation}

    \begin{remark}
      The same argument applies to the atlases of rem.~\ref{rem:other-projections}, for example, the stereographic projection. In such a case, the representation of probabilities is
      \begin{equation*}
        \label{eq:6}
        s_p^{-1}(v) = \left(1 + \expectat p {\normof {v/4} ^2} \right)^{-2}
        \left( \frac v 2 + ( 1 - \expectat p {\normof {v/4}^2}\right)^2 p \ .
      \end{equation*}
    \end{remark}

    \begin{remark}
The Hilbert bundle naturally produces the definition of velocity as Fisher's score and provides a coherent setup for IG. Precisely, the set of probability densities $\mathcal P(\mu)$ is in 1-to-1 correspondence with the subset of the unit sphere of non-negative functions, $p \leftrightarrow \sqrt p$. In turn, the set of positive elements of the unit sphere is a subset of an open set that supports a manifold structure,
\begin{equation*}
    \setof{\rho \in M}{\rho \geq 0} \subset \setof{\rho \in M}{\scalarof{\rho}{1} > 0} \ .
\end{equation*}

However, as it was suggested initially by \cite[sec.~5]{amari:87} and \cite{amari:87dual}, the actual setup of IG is a dually affine theory. We will show in the following that the affine and the Hilber approach are related to each other by a system of model spaces $B_p'  \hookrightarrow L^2_0(p) \hookrightarrow B_p$, where $\hookrightarrow$ is a dense continuous injection, and the Hilbert inner product extends to a duality pairing of $B'_p$ and $B_p$. On each Banach space, we will define an affine structure such that the definition of velocity is again the Fisher's score ~\eqref{eq:velocity}.
    \end{remark}
    
\subsection{\emph{Affine} space}
\label{sec:affine-space}
We turn now to our main subject, \emph{affine manifolds}. We begin with a general definition and shall turn to the specific case of statistical manifolds in sec.~\ref{sec:exponential-affine-manifold}.

The word "affine" above refers to the geometrical construction of vectors associated with displacements according to classical H.~Weyl's axioms, see \cite[sec.~I.2]{weyl:1952}. The L.~Schwartz' textbook \cite[sec.~III.1]{schwartz:1981} has the same presentation in a slightly different language.

Let $M$ be a set and $V$ a real finite-dimensional vector space. A \emph{displacement} mapping is a mapping
\begin{equation}
M \times M \ni (P,Q) \mapsto \overrightarrow{PQ} \in V \ ,
\end{equation}
such that
\begin{enumerate}
\item for each fixed $P$ the partial mapping $s_P \colon Q \mapsto \overrightarrow{PQ}$ is \emph{1-to-1 and onto}, and
\item the \emph{parallelogram law} or \emph{Chasles rule}, $\overrightarrow{PQ} + \overrightarrow{QR} = \overrightarrow{PR}$, holds true.
\end{enumerate}

The notation $Q = P + \overrightarrow{PQ}$ shows the action of the vector space $V$ on the set $M$. From the parallelogram law, it holds $\overrightarrow{PP} = 0$ and $\overrightarrow{PQ}+\overrightarrow{QP}=0$. The structure $(M,V,\overrightarrow{\phantom{pq}})$ is an \emph{affine space}. The corresponding affine manifold is derived from the atlas of charts $s_P \colon M \to V$, $P \in M$. Notice that the change of chart is the choice of a new origin. Such a structure supports a full geometrical development. See, for example, the monograph \cite{nomizu|sasaki:94}.

Because of our non-parametric perspective, we re-define the object to fit our def.~\ref{def:manifold} of a manifold. We consider a generalization of Weyl's axioms that allows for a family of (possibly infinite-dimensional) topological vector spaces instead of a single finite-dimensional vector space. Cf. \cite[p. 42]{lang:1995}.

\begin{definition}[Affine space] \label{def:affine-space}Let $M$ be a set and  let $B_\mu$, $\mu \in M$, be a family of toplinear spaces. Let $(\transport {\nu}{\mu})$, $\nu,\mu \in M$ be a family of toplinear isomorphisms $\transport {\nu} {\mu} \colon B_{\nu} \to B_{\mu}$ satisfying the \emph{cocycle conditions} AF0 below.
  \begin{description}
  \item[(AF0)] \label{item:affine-space-cocycle}  $\transport \mu \mu = I$ and $\transport \nu \rho \transport \mu \nu  = \transport \mu \rho$. 
  \end{description}
  We call $\transport \nu \mu$ the \emph{parallel transport} from $B_\nu$ onto $B_\mu$. 
  Consider a \emph{displacement} mapping
\begin{equation}
\displacement \colon  (\nu,\mu) \mapsto s_\nu(\mu)\in B_{\nu}
\end{equation}
defined on a subset of the product space $\domof \displacement \subset M \times M$. We assume
\begin{description}\label{desc:affineaxiom}
\item[(AF1)] \label{item:affineaxiom1}
  For each fixed $\nu$ the mapping $M_\nu \ni \mu \mapsto s_\nu(\mu) = \displacement(\nu,\mu)$ is 1-to-1.
\item[(AF2)] \label{item:affineaxiom2}
  $\displacement (\mu_1,\mu_2) + \transport {\mu_2}{\mu_1} \displacement (\mu_2,\mu_3) = \displacement (\mu_1,\mu_3)$.
  \end{description}
We will say that the structure $(M,(B_\mu)_{\mu \in M},(\transport {\nu}{\mu})_{\mu,\nu\in M},\displacement)$ is an \emph{affine space}.
\end{definition}

When the vector space does not depend on the point $\mu$, $B_\mu = B$, and the parallel transport maps are the identity, we recover Weyl's definition. Note that in our definition, the domain of $s_\mu$ map could be smaller than $M$, and the image could be smaller than the vector space.

Def.~\ref{def:affine-space}(AF2) with $\mu_1=\mu_3=\nu$ and $\mu_2=\mu$ becomes
\begin{equation}
  \displacement(\nu,\mu) + \transport \mu \nu \displacement(\mu,\nu) = 0 \ .
\end{equation}

Let us compute, where defined, the \emph{change-of-origin} map $s_\mu \circ s_\nu^{-1}$ in an affine space. At $\rho = s_\nu^{-1}(w)$, $w \in B_\nu$, it holds
\begin{equation}\label{eq:change-of-origin}
  s_\mu \circ s_\nu^{-1} (w) = s_\mu(\rho) = s_\mu(\nu) + \transport \nu \mu s_\nu(\rho) = s_\mu(\nu) + \transport \nu \mu w \ .
\end{equation}
Notice that the change-of-origin map extends to an affine map, a top linear isomorphism.

\subsection{\emph{Affine} manifold}

An affine space provides a family of candidates to charts $s_\nu \colon M_\nu \to B_\nu$, $\nu \in M$, that we could use as an atlas. We want to add a smoothness condition.

\begin{definition}[Affine manifold]\label{def:affine-manifold} 

Let $\left(M,(B_\mu)_{\mu \in M},(\transport {\nu}{\mu})_{\mu,\nu\in M},\displacement \right)$ be an affine space as in def.~\ref{def:affine-space}. \begin{description}
\item[(AF3)]We assume that for each $\nu$, the image set $s_\nu(M_\nu)$ is a neighborhood of 0 in $B_\mu$.
\end{description} That is, its interior $s_\nu(M_\nu)^\circ$ is an open set containing $s_\nu(\nu)=0$. Define the coordinates domains as $U_\nu = s_\nu^{-1} \left(s_\nu(M)^\circ\right)$, so that $(s_\nu, U_\nu, B_\nu)$ is a chart on $M$. Such a chart is said to have \emph{origin} $\nu$. Such charts are compatible, and the resulting manifold is, by definition, the \emph{affine manifold} associated with the given affine space.
\end{definition}

\begin{proof}
   All assumptions of def.~\ref{def:manifold} hold, but the fact that the domains in eq.~\eqref{eq:manifold} are both open. For each couple $\mu,\nu \in M$, we have defined $U_\mu = s_\mu^{-1} \left(s_\mu(M)^\circ\right)$ and $U_\nu = s_\nu^{-1} \left(s_\nu(M)^\circ\right)$. Use the change-of-origin equation \eqref{eq:change-of-origin} to see that
   \begin{multline}
     s_\nu(U_\nu \cap U_\mu) = s_\nu(M)^\circ \cap s_\nu \circ s_\mu^{-1} (s_\mu(M)^\circ) = \\  s_\nu(M)^\circ \cap (s_\nu(\mu) + \transport \mu \nu s_\mu(M)^\circ)
   \end{multline}
   is open. \smartqed
 \end{proof}
 
 Given the affine atlas, we can locally express the displacement mapping $\displacement$ with respect to any origin $\sigma \in M$. From the parallelogram law for the points $\sigma,\nu,\mu$, we write
 \begin{equation}
s_\sigma(\nu) + \transport \nu \sigma \displacement(\nu,\mu) = s_\sigma(\mu) \ .
\end{equation}
Because of the cocycle property \ref{def:affine-space}(AF0), we can write
\begin{equation}
    \displacement(\nu,\mu) = \transport \sigma \nu \left(s_\sigma(\mu) - s_\sigma(\nu)\right) \ ,
\end{equation}
which in turn implies the expression in the chart at $\sigma$ of the
displacement is given by
\begin{equation}\label{eq:expressio-displacement}
  \displacement_\sigma(u,v) =      \displacement(S^{-1}_\sigma(u),S^{-1}_\sigma(v)) = \transport \sigma {S^{-1}_\sigma(u)} (v - u) \ .
\end{equation}

The expression $\displacement_\sigma$ is affine in the second variable, hence the derivative
\begin{equation}\label{eq:d2}
 \left. \derivby t \displacement_\sigma(u,v+tk) \right|_{t=0} = D_2 \displacement_\sigma(u,v)[k] = \transport \sigma {S^{-1}_\sigma(u)} k \ .
\end{equation}
Using \eqref{eq:d2} and \eqref{eq:expressio-displacement}, we find that the expression of the displacement solves a differential equation, namely
\begin{equation}
\displacement_\sigma(u,v) = D_2 \displacement_\sigma(u,v)[v-u] \ .
\end{equation}

The previous equation provides a differential equation for the derivative in the first variable. The first partial derivative in the direction $h$ is
\begin{equation}
  \label{eq:d1}
  D_1\displacement_\sigma(u,v)[h] = D_1 D_2 \displacement_\sigma(u,v)[h,v-u] - D_2\displacement_\sigma(u,v)[h] \ .
\end{equation}

\begin{example}[Compact sample space, continuous positive densities] \label{ex:running-1} 
Let $\Omega$ be a compact metric space and $\mathcal B$ be its Borel $\sigma$-algebra. $m$ is a reference finite measure on $(\Omega,\mathcal B)$. $C(\Omega)$ is the Banach space of continuous functions on $\Omega$ with the uniform norm $\normat {\infty} f = \sup _{x \in \Omega} \avalof {fx)}$. Let $M$ be the set of \emph{strictly positive continuous probability densities}, that is, positive functions $f \in C(\Omega)$ such that $\int f \ dm = 1$. It is easy to see that $M$ is an open convex subset of the affine space $A = \setof{f \in C(\Omega)}{\int f \ dm =1}$. The previous set-up applies to the case where $\Omega$ is finite and $C(\Omega)$ is the space of all real vectors with indices in $\Omega$.

Many exciting ways exist to give the set $M$ an affine geometry. We introduce here three basic cases.

\begin{description}
\item[\emph{Flat:}] \label{item:flat} As $M$ is an open convex subset of an affine space $A$, it inherits the affine geometry of the larger space and the displacement vector is simply $\displacement(p,q) = q - p \in B_1$, where $B_1$ is the vector space parallel to $A$, $B_1 = \setof{f \in C(\Omega)}{\int f \ dm = 0}$. That is $s_p(q) = q - p$ and $s_p^{-1}(u) = u + p$ for all $u \in B_1$ such that $u + p > 0$. 
\item[\emph{Mixture:}] \label{item:mixture} If $B_p = \setof{f \in C(\Omega)}{\int f \ p \cdot dm = 0}$ and $\mtransport p q u = \frac p q u$, then we can define
  $s_p(q) = \frac q p -1$ for all $q \in M$. The parallelogram law follows from
  \begin{equation}
    \left(\frac q p -1\right) + \frac q p \left( \frac r q - 1\right ) = \frac r p - 1 \ .
      \end{equation}
The inverse mapping is $s^{-1}_p(u) = (1+u) \cdot p$ and it is defined on the open set $U_p = \setof{u \in B_p}{ u > -1}$. The expression of the displacement is $\displacement_p(u,v) = (1+u)^{-1}(v-u)$ and the partial derivatives are $D_1\displacement_p(u,v)[h] = -(1+u)^{-2}(1+v)h$ and $D_2\displacement_p(u,v)[k] = (1+u)^{-1}k$.
\item[\emph{Exponential:}] \label{item:exponential} With the same $B_p$ as above, we define $\etransport p q u = u - \int u \ q \cdot dm$ and define $s_p(q) = \log \frac q p - \int \log \frac q p \ p \cdot dm$. The parallelogram law follows from
  \begin{multline}
  \left(\log \frac q p - \int \log \frac q p \ p \cdot dm\right) + \\  \left(\log \frac r q - \int \log \frac r q \ p \cdot dm - \int  \left(\log \frac r q - \int \log \frac r q \ p \cdot dm\right)  \ dm \right) = \\ \left(\log \frac r p - \int \log \frac r p \ p \cdot dm\right)    
\end{multline}
The inverse of the chart is 
\begin{equation}
  s_p^{-1}(u) = \expof{u - K_p(u)} \cdot p \ , \quad K_p(u) = \log \int \euler^u \ p \ dm \ , \quad u \in B_p \ .
\end{equation}
The expression of the displacement is
\begin{equation}
  \displacement_r(u,v) = (v-u) - \int (v-u) \euler^{u - K_r(u)} \ r \ dm \ . 
\end{equation}
\end{description}
\end{example}

\subsection{\emph{Affine} bundle}

The specific form of the atlas defining the affine manifold allows the extension of the same atlas to define the affine bundle.
 
\begin{definition}[Affine bundle]\label{def:affine-bundle} Given the affine manifold $\M$ of def.~\ref{def:affine-manifold}, consider the vector bundle
 \begin{equation}
   SM = \setof{(\mu,v)}{\mu \in M, v \in B_\mu} \ .
 \end{equation}
 The equation
 \begin{equation}
   SM \times SM \ni ((\nu,u),(\mu,v)) \mapsto (s_\nu(\mu),\transport \mu \nu v) \in B_\nu \times B_\nu
 \end{equation}
 defines a displacement on the bundle. For each $\nu$ define the chart
 \begin{equation}\label{eq:charts-bundle}
     s_\nu \colon SM \ni (\mu,v) \mapsto (s_\nu(\mu), \transport \mu \nu v) \in B_\nu \times B_\nu 
 \end{equation}
 to define the affine bundle $S\M$ as a manifold. Equivalently, we can say that $S\M$ is a linear bundle with trivialization
 \begin{equation}
     s_\nu \colon (\mu,v) \mapsto (s_\nu(\mu), \transport \mu \nu v) \ .
 \end{equation}
\end{definition}

If we define the velocity as follows, the affine bundle is a convenient expression of the tangent bundle of the affine manifold.
 
\begin{definition}[Velocity] The \emph{velocity} of the smooth curve $t \mapsto \gamma(t)$ of the affine manifold is the curve $t \mapsto (\gamma(t), \velocity \gamma(t))$ of the affine bundle whose second component is
\begin{equation}\label{eq:velocity-1} \velocity \gamma(t) = \lim _{h \to 0} h^{-1} (s_{\gamma(t)}(\gamma(t+h)) = \left. \derivby h s_{\gamma(t)}(\gamma(t+h)) \right|_{h=0} \ .
\end{equation}
\end{definition}

By eq.~\eqref{eq:charts-bundle} and def.~\ref{def:affine-space}(AF2) applied to the points, the expression in the chart centered at $\nu$ of $\velocity \gamma(t)$ is
  \begin{multline}\label{eq:velocity-in-chart}
\transport {\gamma(t)} \nu \velocity \gamma(t) =  \lim_{h \to 0} h^{-1} \transport {\gamma(t)} \nu s_{\gamma(t)}(\gamma(t+h)) = \\ \lim_{h \to 0} h^{-1} \left(s_\nu(\gamma(t+h)) - s_\nu(\gamma(t)\right) = \derivby t s_\nu(\gamma(t))\ ,
  \end{multline}
and, conversely,
  \begin{equation}\label{eq:star}
    \velocity \gamma(t) = \transport {\nu} {\gamma(t)} \derivby t s_\nu(\gamma(t)) \ .
  \end{equation}
  
  \begin{definition}[Integral curve and flow of a section] Let $F$ be a section of the affine bundle, that is, $(\mu,F(\mu)) \in S \M$. An \emph{integral curve} of the section $F$ is a curve $t \mapsto \gamma(t)$ such that $\velocity \gamma(t) = F(\gamma(t))$. A \emph{flow} of the section $F$ is a mapping
\begin{equation}
    M \times I \ni (\nu,t) \mapsto \Gamma_t(\nu)
\end{equation}
such that for each $\nu$ the curve $t \mapsto \Gamma_t(\nu)$ is an integral curve and $\gamma(0,\nu) = \nu$. 
\end{definition}

\begin{definition}[Auto-parallel curve] A curve $I \colon t \mapsto \gamma(t)$ is \emph{auto-parallel} in the affine bundle if 
\begin{equation}\label{eq:auto-parallel-1}
    \velocity \gamma(t) = \transport {\gamma(s)} {\gamma(t)} \velocity \gamma(s) \quad s,t \in I \ .
  \end{equation}
\end{definition}

\begin{proposition}
  The following conditions are equivalent.
  \begin{enumerate}
  \item The curve $\gamma$ is autoparallel.
  \item The expression of the curve in each chart is affine.
  \item For all $s,t$  
\begin{equation}\label{eq:auto-parallel-2}
  \gamma(t) = S^{-1}_{\gamma(s)}\left((t-s) \velocity \gamma(s)\right) \ .    
\end{equation}
\end{enumerate}
\end{proposition}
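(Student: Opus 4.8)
The plan is to prove the cycle of implications $(1)\Rightarrow(2)\Rightarrow(3)\Rightarrow(1)$, using only two ingredients already established: the in-chart expression of the velocity \eqref{eq:velocity-in-chart} and the cocycle property Def.~\ref{def:affine-space}(AF0). Throughout I would tacitly assume, as the statement implicitly does, that $\gamma$ stays inside the relevant chart domains, so that every expression below is defined.

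$(1)\Rightarrow(2)$. Fix an origin $\nu$ and a base time $s$. By \eqref{eq:velocity-in-chart}, $\derivby t s_\nu(\gamma(t)) = \transport{\gamma(t)}{\nu}\velocity\gamma(t)$. Substituting the autoparallel identity \eqref{eq:auto-parallel-1} and collapsing the transports with the cocycle rule gives $\transport{\gamma(t)}{\nu}\velocity\gamma(t) = \transport{\gamma(t)}{\nu}\transport{\gamma(s)}{\gamma(t)}\velocity\gamma(s) = \transport{\gamma(s)}{\nu}\velocity\gamma(s)$, which does not depend on $t$. Hence $t\mapsto s_\nu(\gamma(t))$ has constant derivative on the interval, and therefore is affine by the mean value inequality in the Banach space $B_\nu$. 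Since $\nu$ was arbitrary, (2) holds.

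$(2)\Rightarrow(3)$. Fix $s$ and work in the chart with origin $\gamma(s)$. By (2), $t\mapsto s_{\gamma(s)}(\gamma(t))$ is affine, hence equals $s_{\gamma(s)}(\gamma(s)) + (t-s)\,a$ for some $a\in B_{\gamma(s)}$. Taking $\mu_1=\mu_2=\mu_3=\gamma(s)$ in Def.~\ref{def:affine-space}(AF2) and using $\transport{\gamma(s)}{\gamma(s)}=I$ shows $s_{\gamma(s)}(\gamma(s)) = \displacement(\gamma(s),\gamma(s)) = 0$; differentiating at $t=s$ and invoking \eqref{eq:velocity-in-chart} with $\nu=\gamma(s)$ gives $a=\velocity\gamma(s)$. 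Therefore $s_{\gamma(s)}(\gamma(t)) = (t-s)\velocity\gamma(s)$, which is \eqref{eq:auto-parallel-2}.

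$(3)\Rightarrow(1)$. From \eqref{eq:auto-parallel-2} we read off $s_{\gamma(s)}(\gamma(t)) = (t-s)\velocity\gamma(s)$, so $\derivby t s_{\gamma(s)}(\gamma(t)) = \velocity\gamma(s)$. Applying \eqref{eq:velocity-in-chart} with $\nu=\gamma(s)$ yields $\transport{\gamma(t)}{\gamma(s)}\velocity\gamma(t) = \velocity\gamma(s)$, and composing with $\transport{\gamma(s)}{\gamma(t)}$ and using (AF0) gives $\velocity\gamma(t) = \transport{\gamma(s)}{\gamma(t)}\velocity\gamma(s)$, i.e. \eqref{eq:auto-parallel-1}. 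I do not expect any of the three steps to pose a genuine obstacle: the only points needing a little care are that a Banach-valued curve with constant derivative is affine, and the bookkeeping of parallel transports through the cocycle rule — both routine.
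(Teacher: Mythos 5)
Your proof is correct and takes essentially the same route as the paper: the paper likewise derives from \eqref{eq:auto-parallel-1} and the cocycle rule that the in-chart velocity is constant, concludes the chart expression is affine, specializes the origin to $\gamma(s)$ to obtain \eqref{eq:auto-parallel-2}, and dismisses the converse with ``and conversely.'' Your only addition is to spell out the converse direction $(3)\Rightarrow(1)$ explicitly, which the paper leaves to the reader.
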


\begin{proof}
  From eq.~\eqref{eq:auto-parallel-1} we see that the velocity is constant in each chart, $\transport {\gamma(t)} \nu \velocity \gamma(t) = \transport {\gamma(s)} \nu \velocity \gamma(s)$, and, by \eqref{eq:velocity-in-chart}, we have that $\derivby t s_\nu(\gamma(t) = \derivby s s_\nu(\gamma(s)$. Hence $t \mapsto s_\nu(t)$ is an affine curve, $s_\nu(\gamma(t)) - s_\nu(\gamma(s)) = (t - s) \derivby s s_\nu(\gamma(s)) = (t-s) \transport {\gamma(s)} \nu \velocity \gamma(s)$. Now put $\nu = \gamma(t)$ to get $s_{\gamma(s)}(\gamma(t)) =  (t-s) \velocity \gamma(s)$ hence \eqref{eq:auto-parallel-2}. And conversely. \smartqed
\end{proof}

\begin{proposition} The affine bundle is an affine manifold for the displacement mapping
 \begin{equation}
     ((\nu,v),(\mu,w)) \mapsto \left(\displacement(\mu,\nu), \transport \mu \nu w - v\right) \in B_\nu \times B_\nu
   \end{equation}
   and the transports $\transport \nu \mu \times \transport \nu \mu$.
 \end{proposition}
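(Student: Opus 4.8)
The plan is to verify, one axiom at a time, that the data proposed for $S\M$ — the toplinear spaces $B_\nu\times B_\nu$ attached to the points $(\nu,v)\in S\M$, the maps $\transport\nu\mu\times\transport\nu\mu$, and the displacement $\displacement^{S}((\nu,v),(\mu,w))=(\displacement(\nu,\mu),\transport\mu\nu w-v)$ — fulfils axioms (AF0), (AF1), (AF2) of Def.~\ref{def:affine-space} and (AF3) of Def.~\ref{def:affine-manifold}; once this is done, the argument following Def.~\ref{def:affine-manifold} applies unchanged and produces the affine manifold. For the right-hand side to lie in $B_\nu\times B_\nu$ the first component must be read as $\displacement(\nu,\mu)=s_\nu(\mu)$, and with this reading $s_{(\nu,v)}(\mu,w)=(s_\nu(\mu),\transport\mu\nu w-v)$ is the bundle trivialization \eqref{eq:charts-bundle} translated by the constant $(0,-v)$, so the resulting atlas is equivalent to the one of Def.~\ref{def:affine-bundle}. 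Each of the four verifications is expected to be short, since it reduces directly to the corresponding property already established for $\M$.

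First I would observe that $\transport\nu\mu\times\transport\nu\mu\colon B_\nu\times B_\nu\to B_\mu\times B_\mu$ is a toplinear isomorphism, being a product of such, and that the cocycle identity (AF0) for the product maps follows componentwise from (AF0) for $(\transport\nu\mu)$. For (AF1): if two bundle points have the same displacement from $(\nu,v)$, equality of the first components forces equality of the base points by (AF1) for $\M$, and then equality of the second components forces equality of the fibre vectors because $\transport\mu\nu$ is injective. For (AF2) I would expand $\displacement^{S}((\nu_1,v_1),(\nu_2,v_2))+(\transport{\nu_2}{\nu_1}\times\transport{\nu_2}{\nu_1})\,\displacement^{S}((\nu_2,v_2),(\nu_3,v_3))$ componentwise: the first component is exactly Chasles' rule for $\M$, $\displacement(\nu_1,\nu_2)+\transport{\nu_2}{\nu_1}\displacement(\nu_2,\nu_3)=\displacement(\nu_1,\nu_3)$; the second is $(\transport{\nu_2}{\nu_1}v_2-v_1)+\transport{\nu_2}{\nu_1}(\transport{\nu_3}{\nu_2}v_3-v_2)$, which, after using $\transport{\nu_2}{\nu_1}\transport{\nu_3}{\nu_2}=\transport{\nu_3}{\nu_1}$ and cancelling the two $\transport{\nu_2}{\nu_1}v_2$ terms, telescopes to $\transport{\nu_3}{\nu_1}v_3-v_1$, which is the second component of $\displacement^{S}((\nu_1,v_1),(\nu_3,v_3))$.

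It remains to check (AF3). Here I would first identify the domain: $\displacement^{S}$ is defined at $((\nu,v),(\mu,w))$ precisely when $(\nu,\mu)\in\domof\displacement$, with no constraint on $v$ or $w$, so the set $S\M_{(\nu,v)}$ on which $s_{(\nu,v)}$ is defined is the part of the bundle lying over $M_\nu$. Its image under $s_{(\nu,v)}$ is $s_\nu(M_\nu)\times B_\nu$, because for each fixed $\mu\in M_\nu$ the affine map $w\mapsto\transport\mu\nu w-v$ maps $B_\mu$ onto $B_\nu$; hence its interior is the open set $s_\nu(M_\nu)^\circ\times B_\nu$, which contains $(0,0)=s_{(\nu,v)}(\nu,v)$ by (AF3) for $\M$. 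The proof is, in the end, pure bookkeeping; the single point where a hasty computation can slip is the telescoping in (AF2), where one must keep track of which of $B_{\nu_1},B_{\nu_2},B_{\nu_3}$ each of $v_1,v_2,v_3$ belongs to and transport accordingly. I do not anticipate any genuine obstacle beyond that.
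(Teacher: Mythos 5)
Your proposal is correct and follows exactly the route the paper intends: the paper's proof consists of the single instruction ``Check all the properties AF0--3,'' and you carry out precisely that componentwise verification (including the correct reading of the first component as $\displacement(\nu,\mu)\in B_\nu$ and the telescoping via the cocycle identity in AF2). Nothing to add.
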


 \begin{proof}
   Check all the properties AF0--3. \smartqed
 \end{proof}
 
 \begin{definition}[Acceleration] Consider the curve $t \mapsto \mu(t)$ with velocity $t \mapsto \velocity \mu(t)$. The acceleration $t \mapsto \acceleration \mu(t)$ is the velocity $t \mapsto (\mu(t),\velocity \mu(t))$.
 \end{definition}

 \begin{equation}
     (\velocity \mu(t),\acceleration \mu(t)) = \lim_{h \to 0} h^{-1} s_{\mu(t),\velocity \mu(t)}(\mu(t+h),\velocity \mu(t+h)) \ .
   \end{equation}
Especially, for all $\mu \in M$,
\begin{equation}
  \label{eq:starstar}
  \acceleration \mu(t) = \transport \mu {\mu(t)} \derivby t \transport {\mu(t)} \mu \velocity \mu(t) \ .
\end{equation}
From this equation, it follows that
 \begin{proposition}
   A curve with 0 acceleration is auto-parallel.
 \end{proposition}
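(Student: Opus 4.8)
The plan is to read the conclusion straight off the intrinsic formula \eqref{eq:starstar} for the acceleration, together with the expression \eqref{eq:velocity-in-chart} of the velocity in a chart, and then to invoke the equivalence proved in the Proposition characterising auto-parallel curves. Throughout I fix an origin $\nu \in M$ and apply \eqref{eq:starstar} with that $\nu$ in the role of the generic base point; note that the statement of \eqref{eq:starstar} reuses the letter $\mu$ both for the curve $t \mapsto \mu(t)$ and for the fixed origin, so one should keep these typographically distinct.

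\textbf{Step 1 (cancel the outer transport).} By hypothesis $\acceleration \mu(t) = 0$ for every $t$ in the (connected) parameter interval, so \eqref{eq:starstar} reads $\transport \nu {\mu(t)}\left(\derivby t \transport {\mu(t)} \nu \velocity \mu(t)\right) = 0$. For each fixed $t$ the map $\transport \nu {\mu(t)} \colon B_\nu \to B_{\mu(t)}$ is a toplinear isomorphism, with inverse $\transport {\mu(t)} \nu$ by the cocycle identity Def.~\ref{def:affine-space}(AF0); in particular it is injective, so we may drop it and conclude $\derivby t \transport {\mu(t)} \nu \velocity \mu(t) = 0$ for all $t$.

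\textbf{Step 2 (identify with the chart expression and finish).} By \eqref{eq:velocity-in-chart} applied to the curve $\mu$, we have $\transport {\mu(t)} \nu \velocity \mu(t) = \derivby t s_\nu(\mu(t))$, so Step 1 says that $t \mapsto s_\nu(\mu(t))$ has vanishing second derivative on the interval, i.e. the expression of the curve in the chart at $\nu$ is affine. Since this holds for every origin $\nu$, the Proposition characterising auto-parallel curves (implication $2 \Rightarrow 1$) shows that $\mu$ is auto-parallel. Equivalently, one can bypass that Proposition: Step 1 already gives that $t \mapsto \transport {\mu(t)} \nu \velocity \mu(t)$ is constant on the interval, and applying $\transport \nu {\mu(t)}$ together with the cocycle relation $\transport \nu {\mu(t)} \transport {\mu(s)} \nu = \transport {\mu(s)} {\mu(t)}$ yields $\velocity \mu(t) = \transport {\mu(s)} {\mu(t)} \velocity \mu(s)$ for all $s,t$, which is exactly \eqref{eq:auto-parallel-1}.

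The argument is essentially bookkeeping, so there is no serious obstacle; the one point worth a word of care is the passage ``vanishing derivative on an interval $\Rightarrow$ constant'', which is where the Banach (rather than merely toplinear) structure of the coordinate spaces is used, via the mean value inequality, and the already-mentioned need to keep the moving point $\mu(t)$ and the fixed origin $\nu$ apart in the notation inherited from \eqref{eq:starstar}.
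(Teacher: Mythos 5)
Your proof is correct and follows exactly the route the paper intends: the paper states the proposition as an immediate consequence of Eq.~\eqref{eq:starstar} without writing out the details, and your two steps (cancel the outer transport, then use the cocycle identity or the chart expression \eqref{eq:velocity-in-chart} to recover condition \eqref{eq:auto-parallel-1}) are precisely the omitted bookkeeping. Your remarks on the notational clash in \eqref{eq:starstar} and on where the Banach structure enters (vanishing derivative on an interval implies constant) are accurate and welcome.
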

 
\begin{example} [Sequel of Example~\ref{ex:running-1}] \label{ex:running-2}
 If $t \mapsto p(t)$ is a curve, let us compute the velocity by implicitly assuming enough smoothness to justify all the computations. The velocity in the mixture manifold is
\begin{multline}
  \velocity p(t) = \lim_{h \to 0} h^{-1} \mtransport {p(t+h)}{p(t)} \displacement(p(t),p(t+h)) = \\ \lim_{h \to 0} h^{-1} \left(\frac {p(t)}{p(t+h)} \left(\frac{p(t+h)}{p(t)}-1\right)\right) = \frac {\dot p(t)}{p(t)} \ .
\end{multline}
The velocity in the exponential manifold is
\begin{multline}
  \velocity p(t) = \lim_{h \to 0} h^{-1} \etransport {p(t+h)}{p(t)} \displacement(p(t),p(t+h)) = \\ \lim_{h \to 0} h^{-1} \left(\log \frac {p(t+h)}{p(t)} - \int \log \frac {p(t+h)}{p(t)} \ p(t) \ dm\right)  = \frac {\dot p(t)}{p(t)} \ .
\end{multline}
Remarkably, the expression of the velocity is the same in both cases. In the statistical literature, the quantity $\velocity p(t)$ is called the \emph{Fisher's score} of the 1-dimensional model $p(t)$. The exponential velocity for a curve of the form of a Gibbs model $p(t) \propto \euler^{\alpha(t) U} \cdot p$, that is $p(t) = \euler^{\alpha(t) U - \psi(t)} \cdot p$, is
\begin{equation}
  \velocity p(t) = \derivby t \left(\alpha(t) U - \psi(t)\right) = \dot \alpha(t) U - \dot \psi(t) = \dot \alpha(t) \left(U - \int U \ p(t) \ dm\right) \ .
\end{equation}
In this case, the quantity $\velocity p(t)$ is seen as $\dot \alpha(t)$ times the \emph{fluctuation} $\left(U - \int U \ p(t) \ dm\right)$. Cf. any textbook of Statistical Physics, for example \cite{landau|lifshits:1980}.

Let us compute the acceleration in both cases. In the mixture case,
\begin{equation}\label{eq:m-acceleration}
  \acceleration p(t) = \mtransport {p}{p(t)} \derivby t \mtransport {p(t)} p \velocity p(t) = 
  \frac{p}{p(t)} \derivby t \frac {p(t)} p \frac {\dot p(t)} {p(t)} = \frac {\ddot p(t)}{p(t)} \ . 
\end{equation}
In the exponential case,
\begin{multline}\label{eq:e-acceleration}
  \acceleration p(t) = \etransport {p}{p(t)} \derivby t \etransport {p(t)} p \velocity p(t) = 
  \etransport p {p(t)} \derivby t \left(\frac{\dot p(t)}{p(t)} - \int \frac{\dot p(t)}{p(t)} \ p \ dm\right) = \\
  \frac {\ddot p(t)}{p(t)} - \left(\frac {\dot p(t)}{p(t)}\right)^2 - \int \left(\frac {\ddot p(t)}{p(t)} - \left(\frac {\dot p(t)}{p(t)}\right)^2\right) \ p(t) \ dm = \\
  \frac {\ddot p(t)}{p(t)} - \left(\frac {\dot p(t)}{p(t)}\right)^2 + \int \left(\frac {\dot p(t)}{p(t)}\right)^2 \ p(t) \ dm\ . 
\end{multline}
For the Gibbs model above, the exponential acceleration is proportional to the velocity, namely
\begin{equation}
  \acceleration p(t) = \ddot \alpha(t) \left(U - \int U \ p(t) \ dm\right) = \frac {\ddot \alpha(t)}{\dot \alpha(t)} \velocity p(t) \ .
\end{equation}

The auto-parallel curves in the mixture geometry are of the form $\gamma(t) = \gamma(0) + \dot \gamma(0)t = (1 + \velocity \gamma(0)) \gamma(0) = (1-t) \gamma(0) + t \gamma(1)$. The last expression explains the name. In the exponential geometry, the form of the auto-parallel curve follows from Eq.~\eqref{eq:auto-parallel-2}, $\gamma(t) = S^{-1}_{\gamma(0)}(t \velocity(\gamma(0)) = \euler^{t \velocity \gamma(0) - K_{\gamma(0)}(\velocity \gamma(0))} \cdot \gamma(0)$, that is, it is an exponential family. The auto-parallel interval is the Hellinger arc $\gamma(t) \propto \gamma(0)^{1-t}\gamma(1)^t$.
\end{example}

 \begin{definition}[Duality] Let be given two affine manifolds on the same base set $M$, $\M_i = (M,(B^i_\mu)_{\mu \in M},(\prescript{i}{}{\transport {\nu}{\mu}})_{\mu,\nu \in M},\prescript{i}{}\displacement)$, $i = 1,2$, and let be given for each $\mu \in M$ a duality pairing
   \begin{equation}
     B^1_\mu \times B^2_\mu \ni (u_1,u_2) \mapsto \scalarat \mu {u_1}{u_2} \ .
        \end{equation}
The affine manifolds $\M_1$ and $M_2$ are in duality if for all $\mu,\nu \in M$, $u \in B_{\mu}$, $v \in B^2_{\nu}$, it holds
        \begin{equation}
          \scalarat \mu {u}{\prescript{2}{}{\transport \nu \mu v}} = \scalarat \nu {\prescript{1}{}{\transport \mu \nu u}}{v} \ .
        \end{equation}
      \end{definition}

\begin{example}[Duality. Follows from Examples~\ref{ex:running-1} and \ref{ex:running-2}] \label{ex:running-3}
In the present case, the mixture and the exponential fibers are equal, $\prescript{\text{m}}{}B_p=\prescript{\text{e}}{}B_p=B_p$, and there is a separating pairing $\scalarat p u v = \int u \ v \ p \ dm$. The mixture affine manifold and the exponential affine manifold are dual. Let us check this. For $u \in B_p$ and $v \in B_q$
      \begin{multline}
        \scalarat q {\mtransport p q u} v = \int \frac p q u \ v \ q \ dm = \int u \ v \ p \ dm = \\ \int u \left(v - \int vp \, dm\right) \ p \ dm = \scalarat p y {\etransport q p v} \ .
      \end{multline}
\end{example}

 \begin{definition}[Gradient] Consider a $M$ which is base of two dual affine manifolds $\M_1$ and $\M_2$. A real function $\phi$ on $\M_1$ has a gradient $\Grad \phi$ if $\Grad \phi$ is a section of the affine bundle $S\M_2$ and for all smooth curve $t \mapsto \gamma(t) \in M$ it holds
   \begin{equation}
     \label{eq:grad}
  \derivby t \phi(\gamma(t)) = \scalarat {\sigma} {\Grad \phi (\gamma(t))}{\velocity \gamma(t)} \ .
   \end{equation}
 \end{definition}

 As defined above, the gradient is related but does not coincide with the \emph{natural gradient} of S.I.~Amari. Let us express the gradient $\Grad$ in a chart with origin $\sigma$ with the ordinary gradient $\nabla_\sigma$. In the 1-chart, it holds $\phi \circ \gamma(t) = \left(\phi \circ  \prescript{1}{} S^{-1}_\sigma\right) \circ \left( \prescript{1}{} s_\sigma \circ \gamma(t)\right) = \phi_\sigma \circ \left( \prescript{1}{} s_\sigma \circ \gamma(t)\right)$, where $\phi_\sigma \colon U_\sigma \to \R$ is the expression of $\phi$, so that
 \begin{multline}
   \derivby t \phi \circ \gamma(t) = d\phi_\sigma \left[\derivby t \left( \prescript{1}{} s_\sigma \circ \gamma(t)\right)\right] = d\phi_\sigma \left[\prescript{1}{}{\transport {\gamma(t)} \sigma} \velocity \gamma(t) \right] = \\
   \scalarat \sigma {\nabla_\sigma \phi_\sigma}{\prescript{1}{}{\transport {\gamma(t)} \sigma} \velocity \gamma(t)} = \scalarat {\gamma(t)} {\prescript{2}{}{\transport \sigma {\gamma(t)}}\nabla_\sigma \phi_\sigma}{ \velocity \gamma(t)} \ ,
   \end{multline}
   where we have used \eqref{eq:star} and $\nabla_\sigma$ denotes the gradient computed in the duality of $\prescript{1}{}B_\sigma$ with $\prescript{2}{}B_\sigma$. In conclusion, the gradient of $\phi \colon M$ equals the gradient of $\phi_\sigma \colon \prescript{1}{}B_\sigma$, 
   \begin{equation}
     \Grad \phi(\mu) = \prescript{2}{}{\transport \sigma \mu}\nabla_\sigma \phi_\sigma \circ \prescript{1}{}s_\sigma(\mu) = \nabla_\mu \phi_\mu (0) .
   \end{equation}
   
   \begin{example}[Gradient of the entropy. Follows from Examples \ref{ex:running-1},\ref{ex:running-2}, and \ref{ex:running-3}]\label{ex:running-4}
     The entropy is $\entropyof q = - \int q \log q \ dm$. The expression of the entropy of $q$ at $p$ is $\Entropy_p(v) = - \int (1+v) p \log ((1+v)p) \ dm$, $v \in B_p$. Let $t \mapsto v(t)$ be a smooth curve in $B_q$ with $v(0) = 0$. We have
   \begin{multline}
     \left. \derivby t \Entropy_p(v(t)) \right|_{t=0} = \left. \int (1 + \log ((1+v(t))p) \dot v(t) \ p \ dm \right|_{t=0} = \\ \left. \scalarat p {- \log((1+v(t))p)+ \int \log((1+v(t) )p) \ dm}{\dot v(t)} \right|_{t=0} = \\ \scalarat p {-\log q - \entropyof q}{\dot v(0)} \ .  
   \end{multline}
   In conclusion $\Grad_\text{m} \entropyof q  = - \log q + \entropyof q$. The same result holds for $\Grad_\text{e}$.

   In the exponential geometry, as $\velocity \gamma(t) = \derivby t \log \gamma(t)$, the gradient flow equation for the entropy $\velocity \gamma(t) = - \Grad \entropyof {\gamma(t)}$ becomes
   \begin{equation}
     \derivby t \log \gamma(t) = \log \gamma(t) - \int \gamma(t) \log \gamma(t) \ dm
   \end{equation}
   If we compute the acceleration, we find the remarkable result $\acceleration \gamma(t) = \velocity \gamma(t)$. The curve $\gamma(t) = \euler^{a(t)v - K_p(v)} \cdot p$ has $\velocity \gamma(t) = \dot a(t) (v - \int v \ \gamma(t) \ dm)$, and acceleration $\acceleration \gamma(t) = \ddot a(t) (v - \int v \ \gamma(t) \ dm) = \frac {\ddot a(t)}{\dot a(t)} \velocity \gamma(t)$. We want $\dot a(t) = \ddot a(t)$, that is $a(t) = c\euler^t + b$.   
 \end{example}

 \begin{example}[Differentiable densities] In this example, we discuss how to construct an affine manifold for differentiable densities. For example, such a class of densities is used in functional data analysis. See, for example, assumption A1 in \cite{petersen|mueller:2016}. Let $\Omega$ be a bounded domain in $\R^d$ and let $m$ denote the Lebesgue measure on $\Omega$. Let $C_b^n(\Omega)$ be the Banach space of continuous functions on $\overline \Omega$ and $n$-times differentiable on $\Omega$, with bounded partial derivatives. The norm is $\normat {d} f = \normat \infty f + \sum_{i=1}^d \normat \infty {\partial_i f}$. Let $M_1$ be the affine subspace where $\int f \ dm = 1$. The convex subset
 \begin{equation}
     M = \setof{f \in C_b^n(\Omega)}{f > 0. \int f \, dm = 1}
 \end{equation}
 is an open subset of $M_1$. It is the set of positive differentiable density functions on $\Omega$.
   
 In \cite{hivarinen:2005} we find the divergence
 \begin{equation}
  \KH{q}{p} = \frac 12 \int \avalof{\nabla q - \nabla p} ^2 \, p dm = \frac 12 \normat p {\nabla q - \nabla p} ^2 \ .   
 \end{equation}
 
 Let us show that it is the squared norm of a displacement involving derivatives. For $q,p \in M$, define
   \begin{equation}
     \label{eq:hivarinen}
       \displacement (p,q) = \nabla \log \frac q p = \frac {\nabla q} q - \frac {\nabla p} p
   \end{equation}
Let us define the fibers as
\begin{equation}
   \bm B_p = \setof{\bm u \in C_b^{n-1}(\Omega;\R^d)}{\bm u = \nabla U, U \in C_b^n(\Omega), \int U \ p \ dm = 0} \ .
\end{equation}
All the fibers are equal, and we can assume trivial transports. The semi-group property is clear. Let us compute the inverse of a chart. If
\begin{equation}
    B_p = \setof{U \in C_b^{n}(\Omega)}{ \int U \ p \ dm = 0} \ ,
    \end{equation}
    then $s_p(\bm u)^{-1} = q$ if $\log \frac q p = U \in B_p$ and $\bm u = \nabla U$. It follows that, in the notations of Example~\ref{ex:running-1}, we can write
\begin{equation}
    s_p^{-1}(\bm u) = \euler^{U - K_p(U)} \cdot p \ , \quad \bm u = \nabla U \ .
\end{equation}

The bundle on $M$ with fibers $\bm B_p$ has an interesting inner product introduced by \cite{otto:2001}.
\begin{equation}
    \bm\langle \bm u , \bm v \bm\rangle _ p = \int \nabla U \cdot \nabla V \ p \ dm \ .
    \end{equation}
We do not further develop this example but refer to the relevant chapter in this Handbook. See \cite{lott:2008calculations}, and the application to IG in \cite{ogouyandjo|wadagni:2020}.

    \end{example}
 
\section{Non-parametric \emph{statistical} affine manifolds}\label{sec:nonparametric-statistical-affine-manifolds}

Let us focus now on affine manifolds whose base set is the vector space of signed measures $\smeasures$ on a given measurable space $(\Omega,\mathcal F)$. We refer to \cite{ay|jost|le|schwachhofer:2017} for a full treatment of Information Geometry in $\mathcal M$. We refer to \cite{rudin:1987-3rd} for basic Functional Analysis and Measure theory. More advanced textbooks are  \cite{aliprantis|border:2006} and \cite{brezis:2011fasspde}. We recall that the set of finite measures on a measurable space $(X, \mathcal X)$ is a lattice and a convex pointed cone; see \cite[Ch.~10-11]{aliprantis|border:2006}. 

A \emph{signed measure} is the numerical difference of two finite measures, $\mu = \mu_1 - \mu_2$. There exists a unique minimal decomposition $\mu = \mu_+ - \mu_-$, the \emph{Jordan decomposition}, where the positive part $\mu_+$ and the negative part $\mu_-$ have disjoint supports, that is, $\mu_- \wedge \mu_+  = 0$. The measure $\avalof \mu = \mu_+ + \mu_-$ is the absolute value of $\mu$ and $\mu \mapsto \tvnorm \mu = \int d\avalof \mu$ is the \emph{total variation norm}. The affine subspace of signed measures with total mass $\tau$ is $\smeasures_\tau$. In particular, we are interested in the affine subspace $\smeasures_1 = \setof{\mu \in \smeasures}{\mu(X)=1}$ and in the vector subspace $\smeasures_0=\setof{\mu \in \smeasures}{\mu(X)=0}$. The affine subspace $\smeasures_1$ contains the closed convex set of probability measures $\probabilities$, while $\smeasures_0$ is its tangent subspace.

The integral induces a natural pairing on $\smeasures \times \mathcal L^\infty$, that is, $(\mu,f) \mapsto \int f \ d\mu$. The space $\smeasures_0$ is closed in the weak topology. We have $\scalarof \mu f \leq \normat \infty f \tvnorm f$, and the weak convergence induced on $\mathcal L^\infty$ implies the point-wise convergence. 

For each curve $t \mapsto \mu(t)$ in $\smeasures_1$ and for any topology on $\smeasures$ compatible with the operations of vector space such that $\smeasures_0$ is closed, it holds $\derivby t \mu(t) = \dot\mu(t) \in \smeasures_0$ provided the curve is differentiable at $t$, that is
\begin{equation}
    \lim _ {h \to 0} \left(h^{-1}(\mu(t+h) - \mu(t)) - \dot\mu(t)\right) = 0
\end{equation}. 

The following special case is of high interest. Assume that the curve is smooth and stays in $\probabilities$. In such a case, $\mu_s(A) = 0$ implies that $s$ is a minimum point of $t \mapsto \mu_t(A)$, hence $\dot\mu_s(A) = 0$ so that the absolute continuity $\dot\mu_t << \mu_t$ holds. The \emph{Fisher's score} $\velocity \mu_t = \frac{d\dot\mu_t}{d\mu_t} \in L^1(\mu_t)$ is defined for smooth statistical models. Notice that $\int \velocity \mu_t \ d\mu_t = \dot \mu_t(X) = 0$.

For any topological vector space on $\smeasures$ such the the mapping $\mu \mapsto \int f \ d\mu = \scalarof f \mu$ is continuous, the \emph{Fisher-Rao equation} holds,
\begin{multline}
  \derivby t \int f \ d\mu_t = \int f \ d\dot\mu_t =  \int \left(f-\int f \ d\mu_t\right) \
  d\dot\mu_t = \\ \int \left(f-\int f \ d\mu_t\right) \ \frac{d\dot\mu_t}{d\mu_t} \ d\mu_t = \int \left(f-\int f \ d\mu_t\right) \ \velocity \mu_t \ d\mu_t = \\ \scalarat {\mu_t} {f-\int f \ d\mu_t} {\velocity \mu_t} \ .
\end{multline}
Notice that $\velocity \mu_t \in L^1(\mu_t)$, possibly a different space for each $t$. 

The existence of a common dominating measure is an option to be considered, that is, $\mu(t) = p_t \cdot \mu$. Assume moreover $p(t) > 0$ $\mu$-almost surely. In such a case,
\begin{equation}
\velocity \mu(t) = \frac{\dot p_t \cdot \mu}{p_t \cdot \mu} = \frac {\dot p_t}{p_t} = \derivby t \log p_t \quad \text{$\mu$-a.s.}     
\end{equation}

The basic scheme above has various qualifications in statistical application. A few examples, not further developed in this chapter, are listed below. The cases of interest are in the following sections.

\begin{example}[$\probabilities$ on a measurable space with the total variation norm]\label{ex:TV} Let the base manifold be $\M = \smeasures_1$ and let the fibers be $B_\mu = \smeasures_0$ for all $\mu \in \probabilities$. In this case, the affine structure is simply the affine structure of the affine space $\smeasures_1$. Recall that $\smeasures$ is a Banach space for the total variation norm. This space has even more structure. It is a \emph{Dedekind complete Banach lattice} for the total variation norm; See \cite[Ch.~8--9]{aliprantis|border:2006}. Any bounded above set (respectively, below) in the natural order of signed measures has an upper (respectively, a lower) limit. In particular, $\mu \vee \nu$ and $\mu \wedge \nu$ exist, and open intervals are an open set. We define a displacement by $\displacement(\nu,\mu) = \mu - \nu$, that is, $s_\nu \colon \mathcal P \ni \mu \mapsto \mu-\nu \in M_0$. For each $\nu$ the mapping $\mu \mapsto \vectorat \nu \mu$ is 1-to-1 and the image is the set of all $\xi \in \smeasures_0$. The dual space is the space of bounded measurable functions with the topology of bounded convergence. This space supports many special structures, particularly the geometry of scores and an embedded Riemannian geometry. This construction has been detailed in the monograph \cite{ay|jost|le|schwachhofer:2017}.
\end{example}

\begin{example}[Positive probability densities with the $L^1(m)$ topology]\label{ex:L1}
Let us take $M = \setof{\rho \in L^1(m)}{\rho \geq 0, \int \rho \ dm =1}$ and $B = L^1_0(m)$. These are subsets of the cases in ex.~\ref{ex:TV}. The mapping $s_\eta \colon M \ni \rho \mapsto \rho - \eta \in B$ is 1-to-1. Consider that the image of $s_\eta$ has an empty interior in the $L^1(m)$ topology if the sample space is not finite. This remark is a counter-example showing that no trivial construction is feasible in infinite dimensions.
\end{example}   

\begin{example}[Continuous probability densities for a Borel probability measure on a compact space] This has already been used in the previous section as an introductory example. We assume $\Omega$ is metric and compact. This assumption is satisfied in the finite state space case. Most of the non-parametric Information Geometry literature rests on this assumption. The tutorial  \cite{pistone:2020-NPCS} is a presentation along such lines. We consider a reference Radon measure $m$, a positive, hence continuous, linear functional on $C(\Omega)$. We construct an affine space with base $M = \setof{p \in C(\Omega)}{ p > 0, \int p \ d\mu = 1}$ by setting $\displacement p q = q-p$. In this case, $s_p \colon M \to U_p = \setof{u \in C(\Omega)}{u + p > 0, \int u \ dm =0}$ The space of vectors is $B = \setof{u \in C(\Omega)}{\int u \ dm =0}$. We want to show that $U_p$ is open in $B$. In fact, if $u \in U_p$, there is an $\epsilon > 0$ such that $u -\epsilon > p$, $\epsilon = \min (u + p)$. If we define $B_p = \setof{u \in C(\Omega)}{\int u \ p\ dm = 0}$, then the velocity is $\velocity p_t = \dot p_t / p_t \in B_{p_t}$.
\end{example}     

\begin{example}[Arens-Eells] 
Let $(\Omega,d)$ be a metric space with Borel measurable space $(\Omega, \mathcal B)$. Let $B$ be the vector space of all  signed measures $\xi$ of the form
\begin{equation}
  \xi = \int (\delta_x - \delta_y) \ a(dx,dy) \quad \text{with $a$ a signed measure on $\Omega \times \Omega$} \ .
\end{equation}
That is, $f \in \mathcal L^\infty(\mathcal B)$,
\begin{equation}
\int f \ d\xi = \int (f(x) - f(y)) \ a(dx,dy)  ,     
\end{equation}
in particular, $\xi(\Omega) = 0$. 
The Arens-Eells norm is 
\begin{equation}
 \xi \mapsto \sup \setof{\int d(x,y) a(dx,dy) \,}{ \,\xi = \int (\delta_x - \delta_y) \ a(dx,dy)}
\end{equation}
is its Arens-Eells norm. Take $M$ as a maximal set of probability measures such that $\nu,\mu \in M$ implies $\mu - \nu \in B$. This is clear on a finite state space; otherwise, see \cite[Ch.~15]{aliprantis|border:2006} and \cite[sec.~3.1]{weaver:2018-la-2nd-ed}.
\end{example}

\subsection{\emph{Exponential} affine manifold} \label{sec:exponential-affine-manifold}

This section sets the exponential affine geometry already described in Examples \ref{ex:running-1} to \ref{ex:running-3} in a larger framework. There are many feasible choices for the Banach spaces to act as coordinated spaces. However, not all settings will work, and we consider it essential to spell out general requirements. The following sections will describe two specific choices of Banach spaces.

Let the base set $M$ be the set of all probability measures equivalent to a reference $\sigma$-finite measure $m$. That is, $\mu = p \cdot m$ and $p > 0$ $m$-a.s. The maximum possible base set is the set of all positive $m$-desities. Many models we will define below apply to a smaller set of densities. 

As all the measures $p \cdot m$, $p \in M$, are equivalent,  the vector spaces of $p \cdot m$-equivalent classes of real random variables are equal,  $L^0(p \cdot m) = L^0(m)$. $L^0(m)$ is a topological vector space with the convergence in $m$-measure. However, the bundle $M \times L^0(m)$ seems too big to support an exponential geometric structure because the random variable $U$ and $\euler ^ U$ will not always be $m$-integrable unless the state space is finite. 

Let us assume that $M$ is a set of positive $m$-probability densities possibly smaller than the maximal one. Each probability density $p\in M$ defines its own Banach space of integrable random variables $L^1(p) = L^1(p \cdot m)$. In general, the spaces are not equal for different densities. A sufficient condition for $L^1(p) = L^1(q)$ for all of all $p,q \in M$ is that the density ratio is bounded above and below for all couples, $k \leq q/p \leq K$ for some $0 < k \leq K$. In fact,
\begin{equation}\label{eq:aboveandbelow}
  \int \avalof f \ q \ dm = \int \avalof f \frac q p \ p\ dm \leq
  K \int \avalof f \ p \ dm \ .
\end{equation}

We want a set of integrable random variables for all densities in our base set $M$. That is, we look for a topological vector space $B$ of $m$-classes of random variables such that
\begin{equation}\label{eq:B-space}
  B \hookrightarrow \cap _ {p \in M} L^1(p) \hookrightarrow L^0(m) \ .
\end{equation}

\begin{example}[Compact sample space, continuous densities] If $\Omega$ is compact and $p,q$ are assumed to be continuous, then a bound in Eq.~\eqref{eq:aboveandbelow} always exists.
\end{example}

\begin{example}[Bounded random variables] Clearly, that $\cap_{p \in M} L^1(p) = L^\infty(m)$, so that one could restrict the attention to a bounded random variable only. Such an assumption does not seem to produce a model with sufficient applicability. Many useful random variables are unbounded.
\end{example}

Given a vector space $B$ satisfying Eq.~\eqref{eq:B-space}, we define the family of spaces
\begin{equation}
  \label{eq:B-p-spaces}
  B_p = \setof{u \in B}{ \int u \ p \ dm = 0}
\end{equation}
together with the transports
\begin{equation}\label{expart}
  \etransport p q \colon B_p \ni  \mapsto u - \expectat q u \in B_q \ .
\end{equation}

Notice that the transports are composed correctly in a cocycle,
\begin{equation}
  \etransport p p u = u \quad \text{and} \quad \etransport p r = \etransport q r \etransport p q
\end{equation}

Moreover, we assume $M$ is such that for each couple $p,q \in M$ the $\log$-ratio is well defined in $B$, $\log \frac q p \in B$. The displacement
\begin{equation}
  \displacement \colon (p,q) \mapsto \log \frac q p  - \expectat p {\log \frac q p} \in B_p \subset L^1_0(p)  
\end{equation}
defines an \emph{affine space}. The parallelogram law holds,
\begin{multline}
\left(\log \frac q p  - \expectat p {\log \frac q p}\right) + \etransport q p \left(\log \frac r q  - \expectat q {\log \frac rq}\right) = \\ \log \frac q p  - \expectat p {\log \frac q p}  + \log \frac r p - \log \frac q p - \expectat p {\log \frac r p - \log \frac q p} \\ \log \frac r p - \expectat p {\log \frac rp}
\end{multline}

Finally, we want to check that the charts
\begin{equation}
  s_p \colon M \ni q \mapsto \log \frac q p  - \expectat p {\log \frac q p}
\end{equation}
with
\begin{equation}
  s_p^{-1} \colon u \mapsto \euler^{u - K_p(u)} \cdot p \ , \quad K_p(u) = \expectat p {\euler^u} \ .
\end{equation}
are such that the image of each $s_p$ is a neighborhood of 0 in $B_p$. 

The velocity \eqref{eq:velocity} of the curve
\begin{equation}
  t \mapsto p(t) = \euler^{u(t) - K_p(u(t))} \cdot p
  \end{equation}
  is
\begin{equation}
  \label{eq:velocity-exponential}
  \velocity p(t) = \derivby t \log p(t) = \dot(u(t)) - \derivby t K(u(t)) = \etransport p {p(t)} \dot u(t) \ ,
\end{equation}
where the last equality follows from a well-known property of the derivative of the cumulant generating function. A classical reference for exponential families is \cite{brown:86}.

The auto-parallel curves are the exponential families
\begin{equation}
  t \mapsto \euler^{tu-K_p(tu)} \cdot p \quad u \in B_p \ .
\end{equation}

Let us compute the acceleration with Eq.~\eqref{eq:starstar}
\begin{multline}\label{eq:exponential-acceleration}
  \acceleration p(t) = 
  \etransport p {p(t)} \derivby t \etransport {p(t)} p \velocity p(t) = \\ \etransport p {p(t)} \derivby t \left(\frac{\dot p(t)}{p(t)} - \int \frac{\dot p(t)}{p(t)} \ p \ bm\right) = \\ \etransport p {p(t)} \left(\frac {\ddot p(t)}{p(t)} - \left(\frac{\dot p(t)}{p(t)}\right)^2 - \int \frac {\ddot p(t)}{p(t)} - \left(\frac{\dot p(t)}{p(t)}\right)^2 \ p \ dm\right) = \\ \frac{\ddot p(t)}{p(t)} - \left(\frac{\dot p(t)}{p(t)}\right)^2 + \int \left(\frac{\dot p(t)}{p(t)}\right)^2 \ p(t) \ dm \ .
\end{multline}

\begin{example}
We have seen that $\acceleration p(t) = 0$ implies $p(t) = \euler^{tu - K_p(tu)} \cdot p$, $p(0) = p$, $u \in B_p$. In a different time scale, $q(t) = \exp(a(t)v - K_p(a(t)v) \cdot p$, $\dot a > 0$, $v \in B_p$. We have
\begin{equation}
  \velocity q(t) = \dot a(t) \left(v - \int v \ q(t)  \ dm\right) \quad \acceleration q(t) = \ddot a(t) \left(v - \int v \ q(t) \ dm\right)
\end{equation}
so that
\begin{equation}
    \acceleration q(t) = \frac {\ddot a(t)}{\dot a(t)} \velocity q(t) = \derivby t \log \dot a(t) \velocity q(t) \ . 
\end{equation}
In particular, $a(t) = -1/t$, yields the equation $t \acceleration q(t) + 2 \velocity q(t) = 0$. A classical reference for Boltzmann-Gibbs is \cite{landau|lifshits:1980}. \end{example}

\section{Banach spaces of random variables as coordinate spaces}\label{sec:Banach}

In our definition of affine statistical manifold, we consider a set of probability measures $M$ and, for each $\nu \in M$, a mapping $s_\nu \colon M \to B_\nu$, where $B_\nu$ is a topological vector space, a \emph{toplinear space}. The specific needs of the modeling dictate the choice of the displacement map, the only restriction being the parallelogram law. The choice of the family of toplinear spaces $B_\nu$ and the family of parallel transport $\transport \nu \mu$, $\nu,\mu \in M$, could be challenging. There are two topological requirements:

\begin{enumerate}
    \item $\transport \nu \mu$ is an toplinear isomorphism of $B_\nu$ onto $B_\mu$, and
    \item The set image of $M$ with $s_\nu$, the set of all coordinates, is open in $B_\nu$.
\end{enumerate}

The previous requirements are strict, but there is a wide choice of possible setups. The number of possible variations is too large to provide an exhaustive list. Functional analysis offers a toolbox for adapting to each specific case, just as other fields do. For the reader's convenience and future reference, we provide the displacement maps we introduced in the previous sections in Table~\ref{tab:displacements}.

\begin{table}[t]
\centering
\begin{tabular}{| c | c | c |}
\hline
\quad  \text{name} \qquad & \text{chart} & $\text{patch}$ \\
\hline
\,&\, &\,\\
   \text{mixture}     & $u = \frac{q}{p} - 1$  &  $q =  (1 + u) \cdot p$ \\
   \,&\, &\,\\
\quad   \text{exponential}\quad \quad  & \quad $u = \log \frac{q}{p} - \int \log \frac{q}{p} \, p \, dm $\quad \quad & $q = \euler^{u - K_p(u)} \cdot p$\\
\,&\, &\,\\
   \quad  \text{Hyv\"arinen} \quad  & ${\bm u} = \nabla \log \frac{q}{p}$ & \quad $q = \euler^{U - K_p(U)} \cdot p \quad {\bm u} = \nabla U$ \quad \quad\\
   \,&\, &\,\\
       \hline
       \end{tabular}
\caption{Basic types of affine charts}
\label{tab:displacements}
\end{table}

We have already discussed one instance of the base set $M$ in examples \ref{ex:running-1}, \ref{ex:running-2}, \ref{ex:running-3}. In applications where one can assume bounded densities for a standard reference measure, one can focus on specific bounded functions, such as bounded measurable or continuous functions on a compact metric space. Such a setup is not suitable for many statistical applications. For example, Gaussian probability measures on $\R^d$ do not transform via bounded factors on a bounded domain. However, there is a considerable scope of application in cases where the sample space is naturally bounded, for example, in some Statistical Physics or Data Science applications.

A more general assumption follows from the observation that a model of exponential form $q_\theta \propto \euler^{\theta v}$ imposes a strong integrability condition on the random variable $v$, precisely, the moment generating function $\hat v(\theta) = \int \euler^{\theta v}$ must be finite for $\theta$ in an open interval. We shall show in the following sec.~\ref{sec:Orlicz-space} that such a condition implies the existence of a family of Banach spaces, and this space provides a statistical bundle. This approach to IG has been introduced in \cite{pistone|sempi:95}.

\subsection{Probability densities and Orlicz spaces} \label{sec:Orlicz-space}
\label{sec:stat-grad-model}
In Lebesgue spaces, one evaluates a function $f$ in some power scale $\avalof f ^ \alpha$, $1 \leq \alpha < \infty$. Then, one computes the norm as $\normat \alpha f ^ \alpha = \int \avalof f ^ \alpha$. In Orlicz spaces, one uses a more general scale $\Phi(f)$, for example, $\Phi(f) = \cosh f - 1$.  The review below does not cover the most general case, but it is general enough for our scope. The monograph by Adams and Fournier \cite[Ch. 8]{adams|fournier:2003} and the Musielak monograph \cite[Ch.~I-II]{musielak:1983} provide basic references to this topic.

If $\phi \in C[0,+\infty[$ satisfies:
\begin{enumerate}
\item  $\phi(0)=0$,
\item $\phi$ is strictly increasing, and
\item $\lim_{u \to +\infty} \phi(u) = +\infty$,
\end{enumerate}
its primitive function
\begin{equation}
  \Phi(x) = \int_0^x \phi(u) \ du \ , \quad x \geq 0 \ ,
\end{equation}
is strictly convex and a diffeomorphism of $]0,\infty[$. The function $\Phi$ is extended to $\R$ by symmetry, $\Phi(x) = \Phi(\avalof x)$ and it is called \emph{Young function}. Notice that $\avalof x ^\alpha$, $\alpha > 1$ is included in our definition, while $\avalof x$ is not. If more generality is needed, see \cite[sec,~8.2]{adams|fournier:2003} and \cite[sec.~7.1]{musielak:1983}.

The inverse function $\psi = \phi^{-1}$ has the same properties 1) to 3) as $\phi$, so that its primitive
\begin{equation}
  \Psi(y) = \int_0^y \psi(v) \ dv \ , \quad y \geq 0 \ ,
\end{equation}
is again a Young function. The couple $(\Phi, \Psi)$ is a couple of \emph{conjugate} Young functions. The relation is symmetric, and we write both $\Psi=\Phi^*$ and $\Phi = \Psi^*$.

The following properties are easy to check.  The \emph{Young inequality} holds true,
\begin{equation}\label{eq:Young-inequality}
  \Phi(x) + \Psi(y) \geq xy \ , \quad x,y \geq 0 \ ,
\end{equation}
and the \emph{Legendre equality} holds true ,
\begin{equation}\label{eq:Legendre-equality}
  \Phi(x) + \Psi(\phi(x)) = x \phi(x) \ , \quad x \geq 0 \ ,
\end{equation}
that is, the \emph{Legendre transform} coincides with the convex conjugate, 
\begin{equation}
  \label{eq:legendre-transform}
  \Psi(y) = xy - \Phi(\phi^{-1}(y)) = \inf_x \left(xy - \Phi(x)\right) \ ,
\end{equation}
Table~\ref{tab:Young-functions} collects the examples we will use in the following. The previous theory is just an exceptional case of convex duality; see, for example, \cite[Ch.~I]{ekeland|temam:1999convex2nd}.

\begin{table}[t]
\centering
\begin{tabular}{|c | c |}
      \hline
     \quad  $\Phi = \Psi_* $& \quad $\Psi = \Phi_*$ \\
      \hline
     \,&\, \\
  \quad $x^{\alpha}/\alpha$ &\quad  $\int_0^y v^ {1/(\alpha-1)} \ dv = y^\beta/\beta \ , \quad 1/\alpha + 1/ \beta = 1$\\
   \,&\, \\
  \quad $\exp_2(x) = \euler^x - 1 - x$ & \quad $(\exp_2)_*(y) = \int_0^y \log(1+v) \ dv = (1+y)\log(1+y) - y$ \quad  \\
   \,&\, \\
  \quad $\cosh_2(x) = \cosh x - 1 $ &\quad $(\cosh_2)_*(y) = \int_0^y \sinh^{-1}(v) \ dv = y \sinh^{-1} y - \sqrt {1 + y^2}$ \quad \\
   \,&\, \\
     \quad  $\operatorname{gauss}_{(x)} = \expof{\frac12x^2}-1 $ \quad & \quad\text{no closed form} \\ 
     \,&\, \\
      \hline     
\end{tabular}
\caption{Examples of Young functions. In the table first line $\alpha, \beta > 1$.}
\label{tab:Young-functions} 
\end{table}

Given a probability space $(X,\mathcal X,\mu)$, we denote by $L^0(\mu)$ the space of $\ mu$ classes of real random variables.
\begin{definition}\label{def:Orlicz-space}Given a Young function $\Phi$ and a probability measure $\mu$, the
\emph{Orlicz space} $\orliczof \Phi \mu$ is the vector subspace of $f \in L^0(\mu)$ such that $\int \Phi(\rho^{-1} f) \ d\mu$ is finite for some $\rho > 0$.
\end{definition}

\begin{proposition}
  $\orliczof \Phi \mu$ is a Banach space for the norm whose closed unit ball is $\setof{f \in L^0(\mu)}{\int \Phi(\avalof f) \ d\mu \leq 1}$.
\end{proposition}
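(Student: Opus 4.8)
The plan is to verify that the functional
\[
  \normof f = \inf\setof{\rho > 0}{\int \Phi(\rho^{-1} f) \ d\mu \leq 1}
\]
is a well-defined norm on $\orliczof \Phi \mu$ whose closed unit ball is exactly the set stated, and then to prove completeness. First I would observe that the infimum is taken over a nonempty set: by Definition~\ref{def:Orlicz-space}, for $f \in \orliczof \Phi \mu$ there is some $\rho_0 > 0$ with $\int \Phi(\rho_0^{-1}f)\ d\mu < \infty$, and since $\Phi$ is convex with $\Phi(0)=0$, the map $\rho \mapsto \int \Phi(\rho^{-1}f)\ d\mu$ is nonincreasing and (by dominated convergence, using convexity to dominate) tends to $0$ as $\rho \to \infty$, so the constraint set is a nonempty half-line and $\normof f < \infty$. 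I would then check the norm axioms: positive homogeneity is immediate from the substitution $\rho \mapsto \avalof\lambda \rho$ in the defining infimum; the triangle inequality follows from convexity of $\Phi$ — if $\rho > \normof f$ and $\sigma > \normof g$, then writing $f+g = \frac{\rho}{\rho+\sigma}\cdot\frac f\rho + \frac{\sigma}{\rho+\sigma}\cdot\frac g\sigma$ and applying Jensen to $\Phi$ gives $\int \Phi\!\left(\frac{f+g}{\rho+\sigma}\right)d\mu \leq \frac{\rho}{\rho+\sigma}\int\Phi(\rho^{-1}f)\,d\mu + \frac{\sigma}{\rho+\sigma}\int\Phi(\sigma^{-1}g)\,d\mu \leq 1$, whence $\normof{f+g} \leq \rho+\sigma$; and $\normof f = 0$ forces $\int \Phi(nf)\ d\mu \leq 1$ for all $n$, which by Fatou and $\Phi(x)\to\infty$ for $x\neq 0$ forces $f = 0$ a.e.

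Next I would identify the closed unit ball. The key technical point is that $\rho \mapsto \int \Phi(\rho^{-1}f)\ d\mu$ is continuous on the open half-line where it is finite (monotone convergence from above, dominated convergence from below via convex domination), so if $\normof f \leq 1$ then taking $\rho \downarrow \normof f$ along values where the constraint holds and passing to the limit with Fatou gives $\int \Phi(f)\ d\mu \leq 1$; conversely $\int \Phi(f)\ d\mu \leq 1$ means $\rho = 1$ is admissible, so $\normof f \leq 1$. This establishes that $\setof{f}{\int \Phi(\avalof f)\ d\mu \leq 1}$ is precisely the closed unit ball, which in particular re-proves that it is convex, balanced, and absorbing.

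For completeness, I would take a Cauchy sequence $(f_n)$ in $\orliczof\Phi\mu$. Comparing the Orlicz norm with the $L^1(\mu)$ norm — since $\Phi$ is convex, increasing and superlinear, there is a constant $c$ with $\avalof x \leq c\,\Phi(\avalof x) + c$, giving $\normat{L^1(\mu)}{f} \leq c'\normof f$ for a suitable constant (using that $\mu$ is a probability measure) — shows $(f_n)$ is Cauchy in $L^1(\mu)$, hence converges in $L^1(\mu)$ and, along a subsequence, $\mu$-a.e. to some $f$. Then for $\varepsilon > 0$, pick $N$ with $\normof{f_n - f_m} \leq \varepsilon$ for $n,m \geq N$, so $\int \Phi(\varepsilon^{-1}(f_n - f_m))\ d\mu \leq 1$; letting $m \to \infty$ along the a.e.-convergent subsequence and applying Fatou gives $\int \Phi(\varepsilon^{-1}(f_n - f))\ d\mu \leq 1$, i.e. $f_n - f \in \orliczof\Phi\mu$ (so $f \in \orliczof\Phi\mu$) and $\normof{f_n - f} \leq \varepsilon$ for $n \geq N$. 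I expect the main obstacle to be the care needed in the continuity/limit arguments for $\rho \mapsto \int \Phi(\rho^{-1}f)\ d\mu$ near the boundary of its domain of finiteness — deciding exactly when the infimum defining the norm is attained and justifying the interchange of limit and integral there — rather than the algebraic norm axioms, which are routine consequences of convexity.
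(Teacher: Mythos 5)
Your proof is correct. The paper itself does not prove this proposition: it only remarks that the vector-space property follows from convexity of $\Phi$ and defers the norm and completeness arguments to the standard monographs (Adams--Fournier and Musielak), so you have supplied in full the standard argument those references contain. Two small remarks. First, the finiteness of the infimum near $\rho \to \infty$ does not need dominated convergence: since $\Phi$ is convex with $\Phi(0)=0$, one has $\Phi(\lambda x) \leq \lambda \Phi(x)$ for $0 \leq \lambda \leq 1$, so $\int \Phi(\sigma^{-1}\avalof f)\,d\mu \leq (\rho_0/\sigma)\int\Phi(\rho_0^{-1}\avalof f)\,d\mu \to 0$ directly. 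Second, the boundary issue you flag at the end is less delicate than you suggest: the map $\rho \mapsto \int\Phi(\rho^{-1}\avalof f)\,d\mu$ is nonincreasing, and for $\rho_n \downarrow \rho_*$ the integrands increase monotonically to $\Phi(\rho_*^{-1}\avalof f)$, so monotone convergence (your Fatou step) settles both the unit-ball identification and the fact that the infimum is attained whenever $\normof f > 0$; no genuine continuity analysis on the domain of finiteness is required. The $L^1$ comparison you use for completeness is valid precisely because $\mu$ is a probability measure and $\Phi$ is convex and unbounded, hence eventually above a line of positive slope, which is the point worth stating explicitly.
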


The vector space property follows from the Young function's convexity $\Phi$.  The above-mentioned norm is called \emph{Luxemburg norm}. Explicitly,
\begin{equation}
  \normat {\orliczof \Phi \mu} f \leq \rho \quad \text{if, and only if,} \quad \int \Phi(\rho^{-1} \avalof f) \ d\mu   \leq 1 \ ,
\end{equation}
that is,
\begin{equation}
\normat {\orliczof \Phi \mu} f = \inf \setof{\rho}{\int \Phi(\rho^{-1} \avalof f) \ d\mu   \leq 1} \ .
\end{equation}
We refer to the standard monographs on Orlicz spaces for detailed proofs of the proposition above. See \cite[sec.~8.7-11]{adams|fournier:2003} and \cite[\S{I.1}]{musielak:1983}. See a completeness proof in \cite[Th.~7.7]{musielak:1983}. Some special features of this class of Banach spaces are listed below.

  \begin{enumerate}
  \item If $c$ is a constant function, then $\normat {\orliczof \Phi \mu} c = c$ if, and only if $\Phi(1) = 1$, which is the case for power functions, but is not the case for the other examples in the table.
  \item In the case $\Phi(x) = \avalof x ^\alpha$, $1 < \alpha < \infty$, the Luxemburg norm equals the Lebesgue norm. If $\Phi(x) = \alpha^{-1} \avalof x ^\alpha$, then the Luxemburg norm equals $\alpha^{-1/\alpha}$ $\times$ the Lebesgue norm.
    \item We have assumed the reference measure $\mu$ to be a probability measure. Reference to a probability measure is not part of the general theory of Orlicz spaces, it is a specific feature of the application we seek. 
  \item The convergence of a sequence $(f_n)$ to zero in $\orliczof \Phi \mu$, that is $\lim_{n \to \infty 0} \normat {\orliczof \Phi \mu} {f_n} = 0$, is not equivalent $\lim_{n\to\infty} \int \Phi(f_n) \ d\mu = 0$. In fact, it is required that, for all $\epsilon > 0$, it holds $\normat {\orliczof \Phi \mu} {\epsilon ^ {-1} f_n} \leq 1$ definitively. The condition of norm convergence in terms of integrals is 
    \begin{equation}
      \int \Phi(\epsilon ^ {-1} f_n) \ d\mu \leq 1 \quad \text{definitively for all $\epsilon > 0$.}
    \end{equation}
    Now, for all $0 < \lambda < 1$ it holds $\Phi(\lambda x) \leq \lambda \Phi(x)$, so that
    \begin{equation}
      \int \Phi(\epsilon ^ {-1} f_n) \ d\mu \leq \lambda \int \Phi((\lambda\epsilon) ^ {-1} f_n) \ d\mu \leq \lambda \quad \text{definitively for all $\epsilon > 0$.}
    \end{equation}
  In conclusion,
    \begin{equation}
      \label{eq:convergence}
  \lim_{n \to \infty 0} \normat {\orliczof \Phi \mu} {f_n} = 0 \quad \Leftrightarrow \quad \lim_{n\to\infty} \int \Phi(\epsilon ^ {-1} f_n) \ d\mu = 0 \ , \quad \epsilon > 0 \ .
    \end{equation}
\item If a growth condition of the form $\Phi(ax) \leq C(a) \Phi(x)$, $a > 0$, holds, then the condition $\lim _ {n \to \infty} \int \Phi(f_n) \ d\mu = 0$ clearly implies \eqref{eq:convergence}. The result is true the case of the power functions $\avalof{ax}^ \alpha = a^\alpha \avalof x ^ \alpha$, but it is not the case for $\exp_2$ of Table~\ref{tab:Young-functions} because $\exp_2(cx)/\exp_x(x)$ is unbouded for $x \geq 0$. This issue is important for the duality between conjugate spaces.
\end{enumerate}

For each couple of conjugate Young function $\Phi$ and $\Psi=\Phi^*$, we have a couple of conjugate Orlicz spaces with a duality pairing. Integration of the Young inequality \eqref{eq:Young-inequality} gives
\begin{equation} \label{eq:Young-inequality-integrated}
  \int \avalof{uv} \ d\mu \leq \int \Phi(\avalof u) \ d\mu + \int \Psi(\avalof v) \ d\mu \ .
\end{equation}
The duality pairing is
\begin{equation}
  \label{eq:Orlicz-pairing}
\orliczof {\Phi} \mu \times \orliczof {\Psi} \mu \ni (u,v) \mapsto \scalarat \mu u v = \int uv \ d\mu \ .
\end{equation}
If the norms of $u$ and $v$ in \eqref{eq:Young-inequality-integrated} are both 1, the LHS is bounded by 2, that is,
\begin{equation}\label{eq:continuity-Orlicz-pairing}
  \scalarat \mu u v \leq 2 \normat {\orliczof{\Phi} \mu} u  \normat {\orliczof {\Phi_*} \mu} v \ .
\end{equation}

Each element of an Orlicz space is associated via the duality pairing \eqref{eq:Orlicz-pairing} to a linear continuous functional of the conjugate. However, an Orlicz space is the dual Banach space of its conjugate in particular cases only; see below. However, an equivalent norm follows from the duality pairing, namely, the \emph{Orlicz norm},
\begin{equation}\label{eq:orlicz-norm}
  \normat {{\orliczof {\Psi} \mu }^*} f = \sup \setof{\scalarat \mu f
    g}{\normat {\orliczof {\Psi} \mu} f \leq 1} \ .
\end{equation}
By bounding the pairing with \eqref{eq:continuity-Orlicz-pairing}, we see that $\normat {{\orliczof \Phi \mu}^*} f \leq 2 \normat {\orliczof \Psi \mu} f$. Because of this inequality, \eqref{eq:orlicz-norm} defines a continuous norm on $\orliczof \Psi \mu$ and  $\scalarat \mu f g \leq \normat {\orliczof \Psi \mu ^*} f \normat {\orliczof \Psi \mu} g$. Moreover, the Luxembourg norm and the Orlicz norm are equivalent. Let us show that $\normat {\orliczof \Phi \mu ^*} f \leq 1$ whenever $\normat {\orliczof \Psi \mu ^*} f \leq 1$. The conjugation relation extends to integrals,
\begin{equation}\label{eq:integrated-conjugation}
  \int \Phi(f) \ d\mu = \sup \setof{\scalarat \mu f g - \int \Psi(g) \ d\mu}{g \in \orliczof \Psi \mu} \ .
\end{equation}
but we can compute the $\sup$ on a smaller set because
  \begin{multline}
  \sup \setof{\scalarat \mu f g - \int \Psi(g) \ d\mu}{\int \Psi(g)  d\mu > 1} \leq  \\ \sup \setof{\normat {\orliczof \Psi \mu} g - \int \Psi(g) \ d\mu}{\int \Psi(g)  d\mu > 1} \leq 0 \ .
\end{multline}
With that, \eqref{eq:integrated-conjugation} becomes
\begin{equation}
 \int \Phi(f) \ d\mu = \sup \setof{\scalarat \mu f
    g - \int \Psi(g) d\mu}{\normat {\orliczof {\Psi} \mu} f \leq 1} \leq 1
\end{equation}
and the bound is proved.

Other equivalent norms are of interest and will be discussed later in specific instances of the Young function or the base measure $\mu$.

The domination relation between Young functions implies continuous injection properties for the corresponding Orlicz spaces. We will say that $\Phi_2$ \emph{eventually dominates} $\Phi_1$, written $\Phi_1 \prec \Phi_2$, if there are positive constants $a,b$ and a non-negative $\bar x$ such that $\Phi_1(x) \leq a\Phi_2(bx)$ for all $x \geq \bar x$. As, in our case, $\mu$ is a probability measure, the continuous embedding $\orliczof {\Phi_2} \mu \to \orliczof {\Phi_1} \mu$ holds if, and only if, $\Phi_1 \prec \Phi_2$. If $\Phi_1 \prec \Phi_2$, then $(\Phi_2)_* \prec (\Phi_1)_*$. See \cite[Th.~8.12]{adams|fournier:2003} or \cite[Th.~8.5]{musielak:1983}.

When there exists a function $C$ such that $\Psi(ax) \leq C(a) \Psi(x)$ for all $a \geq 0$. In such a case, the conjugate $\orliczof {\Psi^*} \mu$ is the dual Banach space of $\orliczof \Psi \mu$, and bounded functions are a dense
set. We do not discuss this classical topic further because it is not relevant to our application to IG. See details in \cite[sec.~8.17-20]{adams|fournier:2003}.

We now discuss the examples we use in our version of the affine statistical manifold. See the list in tab.~\ref{tab:Young-functions}.

The spaces corresponding to the power functions coincide with the ordinary Lebesgue spaces. The norm is related by
\begin{equation}
  \normat {\orliczof {\Phi_\alpha} \mu } f = \alpha^{1/\alpha} \normat {L^\alpha(\mu)} f \ .
\end{equation}
The embedding conditions hold. The spaces are dual of each other.

The Young function $\exp_2$ and $\cosh_2$ are equivalent, and the Orlicz spaces are isomorphic equal as vector spaces and isomorphic, $\orliczof {\cosh_2} \mu \leftrightarrow \orliczof {\exp_2} \mu$. This example is of special interest to us as it provides the model spaces for a non-parametric version of Information Geometry; see sec.~\ref{sec:exponential-bundle} below.  They both are eventually dominated by $\operatorname{gauss}_2$ and eventually dominate all powers, that is,
\begin{multline}
L^\infty(\mu) \hookrightarrow \orliczof {\operatorname{gauss}_2} \mu \hookrightarrow \orliczof {\exp_2} \mu \approx \orliczof {\cosh_2} \mu \hookrightarrow \\ L^\alpha(\mu) \hookrightarrow L^2(\mu) \hookrightarrow L^\beta(\mu) \\ \hookrightarrow \orliczof {\exp_2^*} \mu \approx \orliczof {\cosh_2^*} \mu \hookrightarrow \orliczof {\operatorname{gauss}_2^*} \mu \hookrightarrow L^1(\mu) \ ,
\end{multline}
where $\alpha > 2$ and $1 < \beta  <2$ are conjugate, $1/\alpha+1/\beta = 1$. Each space at the left of $L^2(\mu)$ is the dual of one space at the right.

The Orlicz space $\orliczof {\exp_2} \mu = \orliczof {\cosh_2} \mu$ is known by many different names in various chapters of Statistics. The proposition below provides such definitions with references.
\begin{proposition}\ 
  \begin{enumerate}
  \item A function belongs to the space $\orliczof {\cosh_2} \mu$ if, and only if, its \emph{moment generating function $\lambda \mapsto \int \euler^{\lambda f}$ is finite in a neighborhood of 0}. This implies that the moment-generating function is analytic at 0. See \cite[Ch. 2]{brown:86}.
  \item The same property is equivalent to a large deviation inequality. See \cite[Ch. 2]{wainwright:2019-hds}. Precisely, a function $f$ belongs to $\orliczof {\cosh_2} \mu$ if, and only if, it is \emph{sub-exponential}, that is, there exist constants $C_1,C_2 > 0$ such that
\begin{equation}
  \mu (\avalof{f} \geq t) \leq C_1 \expof{-C_2 t} \ , \quad t \geq 0 \ .
\end{equation}
\end{enumerate}
\end{proposition}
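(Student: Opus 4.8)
The plan is to prove the two equivalences separately, in each case reducing the Orlicz-space membership condition $f \in \orliczof {\cosh_2} \mu$ to an equivalent statement about tails or moment generating functions. The key observation throughout is that $\cosh_2$ and $\exp_2$ are equivalent Young functions (as noted in the excerpt), so $\orliczof {\cosh_2} \mu = \orliczof {\exp_2} \mu$, and I may freely work with whichever of the two is more convenient. By Definition~\ref{def:Orlicz-space}, $f \in \orliczof {\cosh_2} \mu$ iff $\int \cosh_2(\rho^{-1} f) \ d\mu < \infty$ for \emph{some} $\rho > 0$.

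For item (1), first I would show the forward direction: if $\int \cosh_2(\rho^{-1} f) \ d\mu < \infty$ for some $\rho > 0$, then since $\cosh_2(x) = \frac12(\euler^x + \euler^{-x}) - 1 \geq \frac12 \euler^{\avalof x} - 1$ up to constants, finiteness of this integral forces $\int \euler^{\rho^{-1} f} \ d\mu < \infty$ and $\int \euler^{-\rho^{-1} f} \ d\mu < \infty$; hence $\lambda \mapsto \int \euler^{\lambda f} \ d\mu$ is finite on the interval $(-\rho^{-1}, \rho^{-1})$, a neighborhood of $0$. Conversely, if the moment generating function is finite on $(-a,a)$, pick $\rho > 1/a$, so that $\pm\rho^{-1} \in (-a,a)$; then $\int \cosh_2(\rho^{-1} f)\ d\mu \leq \frac12\int \euler^{\rho^{-1} f}\ d\mu + \frac12 \int \euler^{-\rho^{-1} f}\ d\mu < \infty$, giving membership. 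The analyticity of the moment generating function at $0$ is a classical fact (it is cited via \cite{brown:86}): finiteness of $\int \euler^{\lambda f}\ d\mu$ on an open interval lets one differentiate under the integral sign and expand in a convergent power series, so I would simply invoke this rather than re-derive it.

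For item (2), I would establish the equivalence of the subexponential tail bound with finiteness of the moment generating function near $0$ (using item (1) as a bridge). The easy direction is that a finite moment generating function implies subexponential tails: if $\int \euler^{\lambda_0 \avalof f}\ d\mu =: C < \infty$ for some $\lambda_0 > 0$, then by Markov's inequality $\mu(\avalof f \geq t) = \mu(\euler^{\lambda_0\avalof f} \geq \euler^{\lambda_0 t}) \leq C\euler^{-\lambda_0 t}$, which is the desired bound with $C_1 = C$, $C_2 = \lambda_0$. For the reverse direction, given $\mu(\avalof f \geq t) \leq C_1 \euler^{-C_2 t}$, I would compute $\int \euler^{\lambda \avalof f}\ d\mu$ via the layer-cake / tail-integral formula $\int \euler^{\lambda\avalof f}\ d\mu = 1 + \lambda\int_0^\infty \euler^{\lambda t}\, \mu(\avalof f \geq t)\ dt \leq 1 + \lambda C_1 \int_0^\infty \euler^{(\lambda - C_2) t}\ dt$, which is finite for every $\lambda < C_2$. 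This shows the moment generating function is finite on a neighborhood of $0$, and hence, by (1), that $f \in \orliczof {\cosh_2} \mu$.

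I do not expect a genuine obstacle here — the argument is a standard chain of Markov-inequality and layer-cake estimates — but the point requiring the most care is the bookkeeping between one-sided integrability ($\euler^{\lambda f}$ for $\lambda > 0$) and two-sided integrability ($\euler^{\lambda\avalof f}$): the moment generating function condition as stated concerns $\euler^{\lambda f}$ for $\lambda$ in a \emph{symmetric} neighborhood of $0$, which is what makes it match $\cosh_2$ (an even function) rather than $\exp_2$ alone, and one should note that finiteness on $(-a,a)$ of $\lambda\mapsto\int\euler^{\lambda f}\,d\mu$ is equivalent to finiteness of $\int\euler^{b\avalof f}\,d\mu$ for small $b>0$, since $\euler^{b\avalof f}\le \euler^{bf}+\euler^{-bf}$. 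Keeping this symmetrization explicit is what ties the three characterizations cleanly together.
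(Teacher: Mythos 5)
Your proposal is correct and follows essentially the same route as the paper: part (1) via the elementary comparison of $\cosh_2$ with $\tfrac12\euler^{\avalof x}$ (which the paper dismisses as ``immediate''), and part (2) via Markov's inequality applied to $\euler^{\lambda\avalof f}$ for the forward direction and the layer-cake/tail-integral formula for the converse. The only cosmetic difference is that the paper normalizes by the Luxemburg norm to extract the explicit constants $C_1=4$, $C_2=\normat{\orliczof{\cosh_2}\mu}{f}^{-1}$, whereas you keep a generic constant; your explicit remark on the symmetrization between $\euler^{\lambda f}$ on a symmetric interval and $\euler^{\lambda\avalof f}$ is a useful clarification the paper leaves implicit.
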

\begin{proof} The first statement is immediate. If $\normat {\orliczof {\cosh_2}
  \mu} f = \rho$, then $\int \euler^{\rho^{-1} \avalof f} \ d\mu \leq 4$. It
  follows that
  \begin{equation}
    \mu(\avalof f > t) =
    \mu\left(\euler^{\rho^{-1}\avalof f} >
      \euler^{\rho^{-1}t}\right) \leq
    \left(\int \euler^{\rho^{-1}\avalof f} \ d\mu\right) 
    \euler^{- \rho^{-1}t} \leq
    4 \euler^{- \rho^{-1}t} \ .  
  \end{equation}
The sub-exponential inequality holds with $C_1=4$ and $C_2 = \normat
{\orliczof {\cosh_2} \mu} f ^{-1}$. Conversely, for all $\lambda > 0$,
\begin{equation}
  \int \euler^{\lambda f} \ d\mu \leq \int_1^\infty \mu\left(\euler^{\lambda
      f^+}> t\right) \ dt \leq C_1 \int_0^\infty \euler^{-(C_2
    \lambda^{-1} - 1)s} \ ds \ .  
\end{equation}
The right-hand side is finite if $\lambda < C_2$ and the same
bound holds for $-f$. \smartqed
\end{proof}

Further useful facts are listed below.

\begin{enumerate}
  \item A sub-exponential random variable is exciting in applications because it admits an explicit exponential bound in the Law of Large Numbers. Another class of interest consists of the \emph{sub-Gaussian} random variables, that is, those random variables whose square is sub-exponential. See \cite{wainwright:2019-hds}.
\item
The theory of sub-exponential random variables provides an
\emph{equivalent norm for the space} $\orliczof {\cosh_2} \mu$. See \cite{buldygin|kozachenko:2000} or
\cite{siri|trivellato:2021-SPL}. The norm is
\begin{equation}
  f \mapsto \sup_k \left((2k)!^{-1} \int f^{2k} \ d\mu
  \right)^{1/2k} = \left\bracevert f \right\bracevert _ {\cosh_2} \ .
\end{equation}
Let us prove the equivalence. If
$\normat {\orliczof {\cosh_2} \mu} f \leq 1$, then
\begin{equation}
  1 \geq \int \cosh_2 f \ d\mu \geq \frac1{(2k)!} \int f^{2k} \ d\mu
    \quad \text{for all $k = 1,2,\dots$,}
\end{equation}
so that $1 \geq  \left\bracevert f \right\bracevert _
{\cosh_2}$. Conversely, if the latter inequality holds, then
\begin{equation}
  \int \coshtwo (f/\sqrt 2) \ d\mu = \sum_{k=1}^\infty \frac1{(2k)!}
  \int f^{2k} \ d\mu \left(\frac12\right)^k \leq 1 \ , 
\end{equation}
so that $\normat {\orliczof {\coshtwo} \mu} f \leq \sqrt 2$.
\item 
It is convenient to introduce a further notation. For each Young
function $\Phi$, the function $\overline \Phi(x) = \Phi(x^2)$ is again
a Young function such that
$\normat {\orliczof {\overline \Phi} \mu} f \leq \lambda$ if, and only
if, $\normat {\orliczof \Phi \mu} {\avalof f ^2} \leq
\lambda^2$. \emph{We will denote the resulting space by}
$\orliczpof \Phi 2 \mu$. For example, $\operatorname{gauss}_2$ and
$\overline{\cosh_2}$ are $\prec$-equivalent , hence the isomorphism
$\orliczof {\operatorname{gauss}_2} \mu \leftrightarrow \orliczpof
{\cosh_2} 2 \mu$. As an application of this notation, consider that
for each increasing convex $\Phi$ it holds
$\Phi(fg) \leq \Phi((f^2+g^2)/2) \leq (\Phi(f^2) + \Phi(g^2))/2$. It
follows that when the $\orliczpof \Phi 2 \mu$-norm of $f$ and of $g$
is bounded by one, the $\orliczof \Phi \mu$-norm of $f$, $g$, and
$fg$, are all bounded by one. The space $\orliczof {\coshtwo} \mu$ has
a continuous injection in the Fr\'echet space
$L^{\infty-0}(\mu) = \cap_{\alpha>1} L^\alpha(\mu)$, which is an
algebra. When we need the product, we can either assume the factors
are both sub-Gaussian or move up the functional framework to the
Lebesgue spaces' intersection.
\end{enumerate}

\begin{example}[Gaussian exponential Orlicz space]
\label{ex:calc-gauss-space}
Let us now discuss other special issues of Orlicz spaces by focusing on a case of specific interest in IG that is, the \emph{Gaussian exponential Orlicz space} $\orliczof {\cosh_2} \gaussdensity$, with $\gaussdensity$ the standard $n$-variate Gaussian density. We note that that \emph{dominated convergence does not hold in this space}. In fact, the squared-norm function $f(x) = \avalof x ^ 2$ belongs to the Gaussian exponential Orlicz space $\orliczof {\cosh_2} \gaussdensity$ because
\begin{equation}
  \int \cosh_2(\lambda f(x)) \ \gaussdensity(x)dx < \infty \quad
  \text{for all} \quad \lambda < 1/2 \ .
\end{equation}
The sequence $f_N(x) = f(x)(\avalof x \leq N)$ converges to $f$
point-wise and in all $L^\alpha(\gamma)$, $1 \leq \alpha < \infty$.
However, the convergence does not hold in the Gaussian exponential
Orlicz space. We see that, for all $\lambda \geq 1/2$,
\begin{multline*}
 \int  \cosh_2(\lambda (f(x)-f_N(x))) \ \gaussdensity(x)dx =
 \int_{\avalof x > N} \coshtwo (\lambda f(x)) \ \gaussdensity(x)dx =
 \infty \ ,
\end{multline*}
while convergence would imply
\begin{equation}
  \limsup_{N \to \infty} \int  \cosh_2(\lambda (f(x)-f_N(x))) \ \gaussdensity(x)dx \leq 1 \quad \text{for all $\lambda >0$}\ .
\end{equation}
In the same spirit, one must observe that the closure in $\orliczof {\cosh_2} \gaussdensity$ of the vector space of bounded functions is called \emph{Orlicz class} $M_{\cosh_2}(\gaussdensity)$ and is strictly smaller than the full Orlicz space. Precisely, one can prove that $f \in M_{\cosh_2}(\gaussdensity)$ if, and only if. the moment generating function $\lambda \mapsto \int \euler^{\lambda f(x)} \ \gaussdensity(x)dx$ is finite for all $\lambda$. See \cite{pistone:2018}. An example is $f(x) = x$. Bounded convergence holds in the Orlicz class. Assume $f \in M_{\cosh_2}(\gaussdensity)$ and consider the sequence $f_N(x) = (\avalof x \leq N) f(x)$. Now,
\begin{equation}
  \int \coshtwo(\lambda(f(x) - f_N(x)) \ \gaussdensity(x) dx =
  \int_{\avalof x \geq N} \coshtwo(\lambda f(x)) \ \gaussdensity(x) dz
\end{equation}
converges to 0 as $N \to \infty$.
\end{example}

\section{Exponential statistical \emph{bundles}} \label{sec:exponential-bundle}

We now show how to apply the construction of the exponential and mixture affine bundles when the fibers are Orlicz spaces, as defined in the previous sections. Precisely, we consider two equivalent conjugate couples of Young functions from Table~\ref{tab:Young-functions},  
\begin{align}
  \exp_2(x) &= \euler^x - 1 - x = \int_0^x (x-s) \exp s \ ds \ . \label{eq:int-1} \\
  \exp_2^*(y) &= (1+y) \log(1+y) - y = \int_0^y \frac {y - s}{1+s} \ ds \ . \label{eq:int-2} \\
  \cosh_2(x) &= \cosh x -1 = \int_0^x (x-s) \cosh s \ ds \ . \label{eq:int-3}\\
  \cosh_2^*(y) &= y \sinh^{-1} y - \sqrt {1 + y^2} = \int_0^y \frac {y - s}{\sqrt{1+s^2}} \ ds \ .
\end{align}

The integral form is convenient in proving useful inequalities. The identity of the Orlicz spaces from the Young functions \eqref{eq:int-1} and \eqref{eq:int-3} follows from the inequalities $\cosh_2(x) \leq \exp_2(x) \leq 2 \cosh_2(x)$, for  $x \geq 0$, which, in turn, follow from $\cosh s \leq \exp s \leq 2 \cosh s$. For all $a,y > 0$, 
\begin{equation}
  \exp_2^*(ay) = \int_0^{ay} \frac{ay - s}{1+s} \ ds = a \int_0^y \frac {a(y -t)}{1+at} \ dt \leq \max(a,a^2) \int_0^y \frac{y - t}{1+t} \ dt \ , 
  \end{equation}
and similarly for $\cosh_2^*$. The growth bounds 
\begin{equation}\label{eq:Delta-2}
\exp_2^*(ay) \leq a \max(1,a) \exp_2^*(y) \quad \text{and} \quad \cosh_2^*(ay) \leq  a \max(1,a)\cosh_2^*(y)\end{equation}
imply a bound on the Luxemburg norm. If $\int \exp_2^*(f) \ dm < \infty$, then
\begin{equation} \label{eq:Delta-2-class}
 \int \exp_2^*(\rho^{-1} f) \ dm \leq \rho^{-1} \max(1, \rho^{-1}) \int \exp_2^*(f) \ dm < \infty \ , \quad \rho > 0 \ .   
\end{equation}
It follows that 
\begin{multline}
\normat {\orliczof {\exp_2^*} m} f ^{-1} \max\left(1,\normat {\orliczof {\exp_2^*} m} f ^{-1}\right)  \int \exp_2^*\left( f\right) \ dm \geq \\ \int \exp_2^*\left(\normat {\orliczof {\exp_2^*} m} f ^{-1} f\right) \ dm = 1 \ ,  
\end{multline}
and
\begin{equation}
  \normat {\orliczof {\exp_2^*} m} f  \min\left(1,\normat {\orliczof {\exp_2^*} m} f \right) \leq \int \exp_2^*\left( f\right) \ dm  \ .
\end{equation}

The bounds \eqref{eq:Delta-2} imply that the conjugate space equals the dual Banach space. This is a general result, see \cite[\S13]{musielak:1983} or \cite[\S8.17--20]{adams|fournier:2003}. Here, we sketch the argument in our special case. From the bound in Eq.~\eqref{eq:Delta-2-class}, the norm is follows with equality, that is, $\normat {\orliczof {\exp_2^*} m} { f} = 1$ if, and only if, $\int \exp_2^*(f) \ dm = 1$. Now, Eq.~\eqref{eq:int-2} shows that $\exp_2^*(y)$ is smaller than $y^2/2 = \int_0^y (y-s) \ ds$. Because of that, we have the injection $L^2(m) \hookrightarrow \orliczof {\exp_2^*} m$, and hence the dual injection $\left(\orliczof {\exp_2^*} m\right)^* \hookrightarrow (L^2(m))^* = L^2(m)$ so that each element of the dual is a random variable. That is, a linear functional on $\orliczof {\exp_2^*} m$ of norm $k$ is of the form $f \mapsto \scalarat m f g$ with $\avalof {\scalarat m f g} \leq k \normat {\orliczof {\exp_2^*} m} f$. The dual is $\orliczof {\exp_2} m$ with the Orlicz norm. In conclusion
\begin{equation}
  \orliczof {\exp_2} \mu = {\orliczof {\exp_2^*} \mu} ^* \quad \text{and} \quad \orliczof {\cosh_2^*} \mu = {\orliczof {\cosh_2^*} \mu} ^* \ .  
\end{equation}
The reverse duality, reflexivity, does not hold unless the sample space is finite. The dual of $\orliczof {\exp_2} \mu$ contains functionals that are not representable as functions. This is similar to the well known case $(L^1(m))^* = L^\infty(m)$ and $(L^\infty(m))^* \subsetneq L^1(m)$. See the general references already cited above.

\begin{remark}[Analytic bi-lateral Laplace transform] \label{remark:analytic-laplace}
  The random variable $u$ belongs to $\orliczof {\exp_2} m$ if, and only if, the Laplace transform of the image probability measure $u_*(m)$ is finite on an open interval containing 0. In such a case, the Laplace transform itself is analytic at 0. See, for example, \cite{malliavin:1995}
\end{remark}

\begin{remark}[Densities with finite entropy] \label{remark:entropy} Let us discuss the relation between the following properties: 1) the probability density $p$ belongs to the conjugate space $\orliczof {\exp_2^*} m$; 2) the density $p$ has integrable logarithm $-\infty < \int \log p \ dm \leq 0$; 3) the density $p$ has finite entropy $0 \leq \int p \log p \ dm < +\infty$. A classical reference is \cite{cover|thomas:2006}. The function $y \mapsto y \log y$, $y > 0$, is convex. Hence, the increment is bounded by the derivatives at the extreme points, $\log y + 1 \leq (1+y) \log (1+y) - y \log y \leq \log(1+y) + 1 \leq y + 1$. From the upper bound, integration gives $\int \exp_2^*(p) \ dm \leq \int p \log p \ dm + 1$, that is, 3) implies 1). From the lower bound, if 2) holds, then $\int p \log p \ dm \leq \int \exp_2^*(p) \ dm - \int \log p \ dm$, that is, 1) and 2) imply 3).
\end{remark}

\subsection{Maximal exponential model}\label{sec:MEM}

In this section, we apply the general methods of Sec.~\ref{sec:nonparametric-statistical-affine-manifolds} to the specific case of a Banach manifold modeled on the Orlicz space $\orliczof {\cosh_2} \mu$. The idea to study the geometry of statistical models by embedding them in a larger exponential family is due to \cite{efron:1975} and \cite{efron:1978}. The possibility of a non-parametric set-up for the statistical bundle was suggested first in \cite{dawid:75} and \cite{dawid:1977as}. See also the review paper \cite{kass:89}. The idea of considering the largest possible exponential model was discussed first in \cite{pistone|sempi:95}, \cite{cena|pistone:2007}, \cite{pistone|rogantin:99}. The extension to the statistical bundle appeared later; see \cite{gibilisco|pistone:98}, \cite{pistone:2013GSI} and \cite{pistone:2018}.

We shall first define the moment functional and the cumulant functional. These are non-parametric versions of the moment-generating and cumulant-generating functions, respectively. Given a probability measure $\mu$ on the measurable space $(\Omega,\mathcal B)$, we define $B_\mu = \setof{u \in \orliczof {\cosh_2} \mu}{\int u \ d\mu = 0}$, where the Orlicz space $\orliczof{\cosh_2}{\mu}$ is defined in sec.~\ref{sec:Orlicz-space}. $B_\mu$ is a Banach space when the Luxemburg norm of $\orliczof {\cosh_2} \mu$ is restricted to the sub-space. The \emph{moment functional} is the convex mapping
\begin{equation}
    M_\mu \colon B_\mu \ni u \mapsto \int \euler^u \ d\mu \in ]0,\infty] \ .
  \end{equation}
  
The \emph{proper domain} of $M_\mu$, $\setof{u \in B_\mu}{M_\mu(u) < \infty}$, is a convex subset of $B_\mu$ that contains the unit ball of $\orliczof {\cosh_2} \mu$. In fact, $\int \cosh_2 u \ d\mu \leq 1$ implies  $\int \euler^u \ d\mu = M_\mu(u) \leq 4$. It follows that the interior of the proper domain of $M_\mu$ is an open convex set, $\mathcal S(\mu) = \setof{u \in B_\mu}{k_\mu(u) < \infty}^\circ$.

The \emph{cumulant functional} is defined by for $u \in \mathcal S(\mu)$ by $K_\mu(u) = \log M_\mu(u)$. For all $u \in \mathcal S(\mu)$ and $h \in B_\mu$, the mapping $t \mapsto M_\mu(h+th) = \int \euler^{th} \ \euler^u \ d\mu$ is the Laplace transform of the random variable $h$ for the finite measure $\euler^u \cdot d\mu$ and it is defined on a neighborhood of 0. It follows from standard results that the mapping is infinitely differentiable at $t=0$ with $k$'th derivative. See, for example, \cite{brown:86}.

\begin{equation} \left. \frac{\partial M_\mu(u+\sum_{j=1}^k t_j h_j)}{\partial t_1 \cdots \partial t_k} \right|_{t_,\dots,t_k=0} =
\int h_1 \cdots h_k \ \euler^u \ d\mu \ .
\end{equation}

The multi-linear mapping $B_\mu \times \cdots \times B_\mu \ni (h_1, \dots, h_k) \to h_1 \cdots h_k \euler^u$ is bounded into $L^1(\mu)$. In fact,
\begin{equation}
  \int \avalof{h_1 \cdots h_k} \euler^u \ d\mu \leq \left(\int \avalof{h_1 \cdots h_k}^n \ d\mu\right)^{1/n} \left(\int \euler^{n u/(n-1)} \ d\mu\right)^{(n-1)/n} 
\end{equation}
and we can chose $n$ such that $nu/(n-1) \in s_\mu$. For such an $n$, the first factor is bounded by
\begin{equation}
  \int \avalof{h_1 \cdots h_k}^n \ d\mu \leq n!^k \int \euler^{h_1 + \cdots h_k} \ d\mu \ ,
\end{equation}
where the RHS integral is bounded if $h_1 + \cdot + h_k \in \mathcal S(\mu)$. This proves the boundedness. In other words, the mapping $\mathcal S(\mu) \ni u \mapsto M_\mu(u)$ is infinitely Gateaux-differentiable, and the derivatives are continuous linear operators.  In compact notation, the derivative is
\begin{equation}
  d^k M_\mu(u) [h_1 \cdots h_k] = \int h^k \euler^u \ d\mu \ .
\end{equation} 
We shall not discuss stronger differentiability conditions. See, for example, in \cite{cena:2002} the proof that the moment functional is Frech\'et-differentiable on $\mathcal S(\mu)$ and analytic on the open unit ball. The moment functional and the cumulant functional are the normalizing constants for probability densities of exponential form. The support $M$ of the affine manifold of interests here is the \emph{maximal exponential model} $\maxexpat \mu$ consisting of all probability densities of the form
\begin{equation}
  q = M_\mu(u)^{-1} \euler^u = \expof{u - K_\mu(u)} \ , \quad u \in \mathcal S(\mu) \ .
\end{equation}

The following portmanteau theorem is crucial for the consistency of the affine structure of our Orlicz space setup. It shows the existence of a statistical bundle with base $\maxexpat \mu$,  whose fibers are closed subspaces of $\orliczof {\cosh_2} \mu$ and admit a proper cycle of parallel transports.

\begin{proposition} \label{portmanteaux} For all densities $p, q \in \maxexpat \mu$ the following propositions are equivalent.
\begin{enumerate}
\item  $q = \euler^{u - K_p(u)} \cdot p$, where $u \in \orliczof {(\cosh-1)} \mu$, $\int u \ p \ d\mu = 0$, and $u$ belongs to the interior of the proper domain of the convex function $K_p = \int \euler^u \ p \ d\mu$. 
\item\label{port-1} An open exponential arc connects the densities $p$ and $q$, that is, there exists a one-dimensional exponential family $r_\theta \propto \euler^{\theta U}$ with $\theta \in I$, $r_0 = p$, $r_1 = q$, and $[0,1] \subset I$.
\item\label{port-2} $\orliczof {\cosh_2} p = \orliczof {\cosh_2} q$ and the norms are equivalent;
\item\label{port-3} $p/q \in \cup_{\alpha > 1} L^\alpha(q)$ and $q/p \in \cup_{\alpha>1} L^\alpha(p)$.
\item\label{port-4} The mapping $v \mapsto \frac q p v$ is an isomorphism of $\orliczof {\cosh_2^*} p$ onto $\orliczof {\cosh_2^*} q$\end{enumerate}

\end{proposition}
\begin{proof}[Main argument only] We give only part of the proof. See \cite{cena|pistone:2007,santacroce|siri|trivellato:2016,santacroce|siri|trivellato:2018,siri|trivellato:2021-SPL} for a detailed proof and some further developments. Precisely, we prove a generalization of the implication \ref{port-1}$\Rightarrow$\ref{port-2}. Let $F$ be logarithmically convex on $\R$, such that $\Phi = F - 1$ is a Young function. For example, the assumption holds for both $F(x)=\cosh x$ and $F(x) = \euler^{x^2/2}$. For all real $A$ and $B$, the function
\begin{equation*}
  \R^2 \ni  (\lambda,t) \mapsto F(\lambda A) \euler^{tB} = \expof{\log F(\lambda
    A) + tB}  
\end{equation*}
is convex, and so is the integral
\begin{equation*}
  C(\lambda,t) = \int F(\lambda f(x)) \ \euler^{tu(x)} \ p(x) \ \mu(dx) \ ,
\end{equation*}
where $f \in \orliczof \Phi \gaussdensity$ with and $u \in \orliczof {\cosh_2} p$ with $\int u(x) \ p(x) \ dx = 0$. Without the restriction of generality, assume $\normat {\orliczof \Phi p} f =1$. Let us derive two marginal inequalities. First, for $t=0$, the definition of Luxemburg norm gives
\begin{equation*}
  C(\lambda,0) = \int F(\lambda f) \ p(x) \ \mu(dx)
  \leq 2 \ , \quad -1 \leq \lambda \leq 1 \ .
\end{equation*}

Second, for $\lambda = 0$, consider $K_p(tu) = \log \int \euler^{tu} \ p(x) \mu(dx)$, where $t$ belongs to an an open interval $I$ containing $[0,1]$ and such that $K_p(tu) < + \infty$. It follows that
\begin{equation*}
  C(0,t) = \int \euler^{tu} \ p(x) \mu(dx) = \euler^{K_p(tu)} <
  + \infty \ .
\end{equation*}

Choose a $t>1$ in $I$ and consider the convex combination
\begin{equation*}
  \left(\frac{t-1}t,1\right) = \frac{t-1}t (1,0)+\frac1t (0,t)
\end{equation*}
and the inequality
\begin{equation*}
  C\left(\frac{t-1}t,1\right) \leq \frac{t-1}t C(1,0)+\frac1t C(0,t)
  \leq 2 \frac{t-1}t + \frac1t \euler^{K_1(tu)} \ .
\end{equation*}
Now,
\begin{multline*}
  \int \Phi\left(\frac{t-1}t f(x)\right) \euler^{u(x)-K_1(u)}
  p(x) \ \mu(dx) = \\ \int F\left(\frac{t-1}t f(x)\right)
  \euler^{u(x)-K_p(u)} p(x) \ \,u(dx) - 1 = \\
  \euler^{-K_p(u)} C\left(\frac{t-1}t,1\right) - 1 \leq
   \euler^{-K_p(u)} \left(2\frac{t-1}t + \frac1t
     \euler^{K_p(tu)}\right) - 1 \ .\end{multline*}

 As the RHS is finite, we have proved that $f \in \orliczof \Phi q$ for $q = \euler^{u - K_p(u)}$. Conversely, a similar argument shows the other implication. We have proved that all Orlicz spaces $\orliczof \Phi p$, $p \in \maxexpat \gaussdensity$ are equal. In turn, equality of spaces implies the equivalence norms. It is possible to derive explicit bounds by choosing a $t$ such that the RHS is smaller or equal to 1. \smartqed
\end{proof}

If $p,q$ are densities in the maximal exponential model, then there is an $\epsilon > 0$ such that the combination $r = (1-\lambda) p + \lambda q$, $-\epsilon < \lambda < 1+\epsilon$ is there too. That is, \emph{the maximal exponential model is a convex set and is open on lines}. This is proved by Prop.~\ref{portmanteaux}\eqref{port-3}. See the detailed proof in \cite{santacroce|siri|trivellato:2016}.

Let us recall the notations of Sec.~\ref{sec:nonparametric-statistical-affine-manifolds}: 
\begin{gather}
  B_p = \setof{u \in \orliczof {\cosh_2}  \mu}{\int u \ p \ d\mu = 0} \label{eq:bsubp}\\
  \euler_p(u) = \expof{u - K_p(u)} \cdot p \label{eq:esubp} \\
  \etransport p q u = u - \int u \ p \ d\mu \label{eq:etransp} \quad \mtransport p q u = \frac q p u
\end{gather}

The spaces $B_p$, $p \in \maxexpat \mu$, of Eq. ~\eqref{eq:bsubp} will be the fibers of the statistical bundle. Given $p, q \in \maxexpat \mu$, prop.~\ref{portmanteaux}\eqref{port-2} shows that the Banach spaces $B_p$ and $B_q$ are vector sub-spaces of co-dimension 1 of the two isomorphic space $\orliczof {\cosh_2} p \approx \orliczof {\cosh_2} q$. The mapping $\etransport p q \colon B_p \to B_q$ of Eq.~\eqref{eq:etransp} is such an automorphism.

According to prop.~\ref{portmanteaux}\eqref{port-1}, for each given $p \in \maxexpat \mu$, every other $q \in \maxexpat \mu$ is of the form $\euler_p(u)$ for some $u \in B_p$. Precisely, $\log \frac q p = u - K_p(u)$ with $\int \log \frac q p \ p \ d\mu = - K_p(u)$ and $u = \log \frac q p - \int \log \frac q p \ p \ d\mu$. It follows that the mapping
\begin{equation} \label{eq:exp-displacement}
\maxexpat \mu ^ 2 \ni (p,q) \mapsto \displacement (p,q) =  \log \frac q p - \int \log \frac q p \ p \ d\mu \in B_p
\end{equation}
is well defined and, moreover,
\begin{equation}
 s_p \colon \maxexpat \mu \ni q \mapsto \displacement(p,q) \in B_p 
\end{equation}
is 1-to-1 with image  the open set $\setof{u \in B_p}{K_p(u) < \infty}^\circ$.

Eq.~\eqref{eq:etransp} defines a linear continuous invertible operator from $B_p$ onto $B_q$. The cocycle properties hold: $\etransport p p$ is the identity and $\etransport q r \etransport p q = \etransport p r$. In turn, we see that $\left(\etransport p q\right) ^ {-1} = \etransport q p$.

We have thus proved that the statistical bundle
\begin{equation}
  S\maxexpat \mu = \setof{(p,u)}{p \in \maxexpat \mu, u \in B_p}
\end{equation}
admits the family of parallel transports $\etransport p q \colon B_p \to B_q$. For this family, the map $\displacement$ of eq.~\eqref{eq:exp-displacement} is an affine displacement. Moreover, the image of the chart is an open set. All requirements for an affine statistical manifold hold.

Let us discuss the duality. Define 
\begin{equation}
  \pstar B_b = \setof{v \in \orliczof {\exp_2^*} \mu}{ \int v p \ d\mu
    = 0} \ . 
\end{equation} 
It is a Banach space for the restriction of the Luxemburg norm. We use the pre-script notation to remember that $B_p$ is the dual of $\pstar B_p$, that is, $\left(\pstar B_p\right)^* = B_p$. In the pairing $B_p$, $\pstar B_p$, it holds
\begin{equation}
  \scalarat q {\etransport p q u} v = \scalarat p u {\mtransport q p v} \ ,
\end{equation}
for all $p,q \in \maxexpat \mu$, $u \in B_p$ and $v \in \pstar B_p$.

We can define the conjugate affine system with
\begin{equation}
 \pstar s_p \colon q \mapsto \frac q p - 1 \ .
\end{equation}
Let us check that $\pstar s_p(q) \in \pstar B_p$. Clearly, $\int \left(\frac q p - 1\right)\ p \ d\mu = 0$. From $\exp_2^*(y) = (1+y) \log(1+y) - y$, we find \begin{equation}\int \exp_2^*(s_p(q)) \ p \ d\mu = \int \frac q p \log \frac q p p \ d\mu - \int \frac q p \ d\mu + 1 = - \int \log \frac p q \ q \ d\mu \ .
\end{equation}
The last term of equality is finite because it is the opposite of the normalizing constant of the exponential representation of $p$ for $q$. The existence of a family of transports is shown in Prop.~\ref{portmanteaux}\eqref{port-4}. 

Below, we give the first three derivatives of the cumulant functional. As the values of the Gateaux derivatives are directional, all equalities below reduce to well-known properties of the usual cumulant generating functions.
\begin{equation}\label{eq:K1}
  d K_p(u) [h] = \int h \ \euler^{u - K_p(u)} \ d\mu = \int h \ \euler_p(u) \ d\mu \ . \end{equation}

\begin{multline} \label{eq:K3}   
d^2 K_p(u) [h_1,h_2] = \int (\etransport p {\euler_p(u)} h_1)(\etransport p {\euler_p(u)} h_2) \ \euler_p(u) \ d\mu = \\ \scalarat {\euler_p(u)}{\etransport p {\euler_p(u)} h_1}{\etransport p {\euler_p(u)} h_2} \ . 
\end{multline}
\begin{equation}\label{eq:K4}
d^3K_p(u) [h_1,h_2,h_3] =
    \int (\etransport p {\euler_p(u)} h_1)(\etransport p {\euler_p(u)} h_2)(\etransport p {\euler_p(u)} h_3) \ \euler_p(u) \ d\mu \ .
    \end{equation}

\begin{remark} The equations above show that the expected value $\expectat q h$, the covariance $\covat q {h_1} {h_2}$, and the triple covariance $\tricovat q {h_1}{h_2}{h_3}$, all depend on a convex function, namely, the cumulant functional. What we have here is a special case of Hessian geometry. See \cite{shima:2007}.
\end{remark} 
 
 \begin{remark}[Entropy: cf. ex.~\ref{ex:running-4} and Rem.~\ref{remark:entropy}] The entropy is finite on all of $q \in \maxexpat \mu$ because $-\entropyof q = \KL q 1$ and its expression in terms of $K_1$ and $dK_1$ is
 \begin{equation}
     \int q \log q \ d\mu = \int (u - K_1(u)) \euler_1(u) \ d\mu = dK_1(u)[u] - K_1(u) \ .
 \end{equation}
 
 The random variable $\log q = u - K_1(u)$ is integrable, with $\int \log q \ d\mu = -K_p(u)$. It follows from the argument in rem.~\ref{remark:entropy} that each density $p \in \maxexpat \mu$ is an element of $\orliczof {\exp_2^*} \mu$.

We have the mapping between convex sets 
\begin{equation}
B_1 \supset \mathcal S(\mu)   \ni u \mapsto \euler_1(u) \in \maxexpat \mu \subset \pstar B_1 + 1 \ .    
\end{equation}
Notice that $p \mapsto \pstar s(p) = p - 1 \in \pstar B_1$ is not a chart of $\maxexpat \mu$ because the image is not always an open set. It is the restriction of an affine chart defined on a larger base set, the affine subspace of $\orliczof {\cosh_2^*} \mu$ generated the maximal exponential model, that is, $\pstar B_1 + 1$.
\end{remark}
  
 We conclude this section with examples showing classical topics of IG in our formalism.
 
\begin{example}[First and second variation of the KL divergence] Given $p, q \in \maxexpat m$, we can write $q = \euler^{u - K_p(u)} \cdot p = \euler_p(u)$, that is, $u = s_p(q)$. It follows that the value at $u$ of the expression of the Kullback-Leibler divergence $q \mapsto \KL q p$ in the chart $s_p$ is
   \begin{multline}
     \KL q p = \int \log \frac q p \ q \ dm = \int (u - K_p(u)) \
     \euler_p(u) \ p \ dm = \\ \int u \
     \euler_p(u) \ p \ dm - K_p(u) = d K_p(u) [u] - K_p(u) \ .
   \end{multline}
The derivative in the direction $h$ is
   \begin{multline}
 d (d K_p(u) [u] - K_p(u)) [h] = d^2 K_p(u) [u,h] + d K_p(u) [h] -
 dK_p(u) [h] = \\ d^2 K_p(u) [u,h] =  \scalarat q {\etransport p q u}{\etransport p q h} 
 = \scalarat p {\mtransport q p \etransport p q u}{h} \ .
\end{multline}

The second derivative in the directions $h$ and $k$ is
\begin{multline}
d^2 (d K_p(u) [u] - K_p(u)) [h,k] =  \\ d (d^2 K_p(u) [u,h]) [k] =  d^3
K_p(u) [u,h,k] + d^2 K_p(u)[k,h] = \\ \int (\etransport p {q} u) (\etransport p {q} h)(\etransport p {q} k) \ q \ d\mu +  \int (\etransport p {q} h)(\etransport p {q} k) \ q \ d\mu
\end{multline}

Both $\etransport p {q} h$ and $\etransport p {q} k$ are in the fiber
$B_q \subset \orliczof {\cosh_2} {q}$ but, in general, their product
$(\etransport p {q} h)(\etransport p {q} k)$ is an element of
$\cap_{\alpha > 1} L^\alpha(q)$ only. If actually $h,k \in \orliczof
{\operatorname{gauss}_2} q$, a simple algebraic expansion presents the symmetric part of the bilinear operator as a
function of the product $hk - \int hk \ p \ dm \in B_p$.
\end{example}

\begin{example}[Sub-models with constant expectation] Let be given a constant $b \in \R$, a reference density $p \in \maxexpat m$, and a random variable $f \in \orliczof {\cosh_2} m$ such that $\int f \ p \ dm = b$.  Consider the subset $E(f,p)$ of the maximal exponential model $\maxexpat m$ consisting of all densities $q$ such that $\int f \ q \ dm = \int f \ p \ dm$. It is a relatively open convex set. Note that $f - b \in B_p$ and the condition can be equivalently rewritten in terms of the coordinate $u = s_p(q)$. Namely, $\euler_p(u) = \euler^{u - K_p(u)} \cdot p = q \in E(B,p)$, if, and only if,
\begin{equation}
0 = \int (f - b) \ \euler_p(u) \ dm = dK_p(u)[f - b] \ .   
\end{equation}
Any tangent vector $h$ satisfies $d^2K_p(u)[h,f-b] = \covat q h {f-b} = 0$ For a smooth $F \colon \maxexpat m \to \R$, the expression at $p$ is $F_p(u) = F \circ \euler_p(u)$. Any extremal point of $F$ restricted to $E(f,p)$ satisfies $DF_p(u)[h] = 0$.

Consider the entropy $\entropyof q = \int \log q \ q \ dm$. As seen in Rem.~\ref{remark:entropy}, we have
$\Entropy_p(u) = dK_1(u)[u] - K_1(u)$ and $D\Entropy_1(u)[h] = \covat
{\euler_1(u)} h u$, so that the stationarity condition is $\covat
{\euler_1(u)} h f = 0$ whenever $ \covat {\euler_1(u)} h {f - b}$. In
conclusion $u \propto (f - b)$ and the stationary point has the form
$q = \euler^{\theta (f - b) - K_p(\theta(f -b))}$.

A similar argument holds for a finite number of random variables with given expected values. In the case of an infinite dimensional subspace of random variables, the subspace must be splitting.
\end{example}

\begin{example}[Pythagorean theorem for the KL divergence]
The Kullback-Leibler divergence is $\KL p q = \int \log \frac p q \ p \ dm$. Consider positive densities $q = \euler^{u - K_p(u)} \cdot p$, $q \in \maxexpat p$, and $r = (1+v) \cdot p$, $v \in \pstar B_p$. A simple computation shows that
\begin{equation*}
  \KL r q + \scalarat p u v = \KL r p + \KL p q \ .
\end{equation*}
The case where $\scalarat p u v = 0$ is sometimes called Pythagorean
Theorem for divergences. The result implies a conjugation statement. In fact, $ \scalarat p u v \leq \KL r p + \KL p q$ and $r=q$ gives $\scalarat p u v = \KL q p + \KL p q$.
\end{example}

\subsection{Covariant derivatives, tensor bundle, acceleration}

In discussing higher-order geometry, one needs to define bundles whose fibers are the product of multiple copies of the mixture and exponential fibers.

As a first example, the \emph{full bundle} is
\begin{equation} \label{fullbundle}
  \fullbundleat \mu = \setof{(q,\eta,w)}{q \in \maxexpat \mu, \eta \in
    \mixfiberat q \mu, w \in \expfiberat q \mu} \ .
\end{equation}

 There is a duality pairing $\mixfiberat q \mu \times \expfiberat q \mu \ni (\eta,w) \mapsto \scalarat q \eta w$ and the dual of $\mixfiberat q \mu$ is $\expfiberat q \mu$. The full bundle is our setup to discuss second-order geometry, such as covariant derivatives. 

More generally, ${}^h\!S^k$ will denote the case with $h$ mixture factors and $k$ exponential factors.  We use both notations for expected values in the following sections, $\int F \ p d\mu = \expectat p F$.
\\

Let us compute the expression of the velocity at time $t$ of a smooth
curve in the exponential bundle:
\begin{equation}\label{statcurve}
t \mapsto \gamma(t) = (q(t), w(t)) \in \expbundleat \mu = \tensorat 0 1 \mu \ ,  
\end{equation}
where $q(t) \in \maxexpat \mu$ and $w(t) \in \expfiberat {q(t)} \mu$ is a $q(t)$-centered random variable in the Orlicz space $\orliczof {\cosh_2} {\mu}$. 

In the chart centered at $p$, the expression of the curve is
\begin{equation}
\gamma_{p}(t) = s_p(\gamma(t))=\left(s_p(q(t)),\etransport {q(t)}p w(t)\right) \ ,  
\end{equation}
and, consequently, the time derivative has two components, 
\begin{multline}\label{eq:deriv1}
  \derivby t  s_p(q(t)) = \derivby t \left(\log \frac {q(t)} p - \expectat p {\log \frac {q(t)} p}\right) = \frac {\dot q(t)}{q(t)} - \expectat p {\frac {\dot q(t)}{q(t)}} = \\ \etransport {q(t)}{p} \frac {\dot q(t)}{q(t)} = \etransport {q(t)} p \derivby t \log q(t) \ ,
\end{multline}
and 
\begin{equation}\label{eq:deriv2}
  \derivby t  \etransport {q(t)} p w(t) = \derivby t \left(w(t) - \expectat p {w(t)}\right) = \dot w(t) - \expectat p {\dot w(t)} \ . 
\end{equation}

By expressing the tangent at each time $t$ in the chart centred at the current position $q(t)$, from the first component, we obtain the \emph{velocity},
\begin{equation}\label{eq:exponential-velocity}
\velocity q(t) =  \etransport p {q(t)} \derivby t s_p(q(t)) = \dot u(t) - \expectat {q(t)}{\dot u(t)} = \derivby t \log q(t) = \frac {\dot q(t)}{q(t)} \ .
\end{equation}

Notice that $t \mapsto (q(t),\velocity q(t))$ is a curve in the exponential statistical bundle whose expression in the chart centered at $p$ is $t \mapsto (u(t),\dot u(t))$. The mapping $q \mapsto (q,\velocity q)$ is the \emph{lift} of the curve to the statistical bundle.

 Let us turn to the interpretation of the second component in
\eqref{eq:deriv2}. In terms of the exponential parallel transport in \eqref{expart}, we define an exponential \emph{covariant} derivative by setting
\begin{multline}\label{eq:exponential-derivative}
\Derivby t w(t) = \etransport p {q(t)} \derivby t \etransport {q(t)} p
w(t) = \\ \etransport p {q(t)} \left( \dot w(t) - \expectat p {\dot w(t)}\right) = \dot
w(t) - \expectat {q(t)} {\dot w(t)} \ .  
\end{multline}

The notation $\,\Derivby t \,$ will generically denote the covariant time derivative in a given transport or connection, whose choice will depend on the context. When necessary, we use $\eDeriv$, $\mDeriv$, or similar notations.

Let us now do the computation in the \emph{mixture bundle}. The curve is  
\begin{equation}
t \mapsto \zeta(t) = (q(t),\eta(t)) \in \mixbundleat \mu = \tensorat 1 0 \mu
\end{equation}
The computation in the first component is the same as above. The expression of the second component in the chart
is $\mtransport {q(t)} p \eta(t) = \frac {q(t)} p \eta(t)$. The derivation is
\begin{equation}
\derivby t \mtransport {q(t)} p \eta(t) = \derivby t \frac {q(t)} p
\eta(t) = \frac 1p\left(\dot q(t) \eta(t) + q(t)\dot \eta(t)\right) \ ,
\end{equation}
which, in turn, defines the mixture covariant derivative as 
\begin{multline}\label{eq:mixture-derivative}
\Derivby t \eta(t) = \mtransport p {q(t)} \derivby t \mtransport {q(t)} p
\eta(t) = \\ \frac {p}{q(t)} \frac 1p \left(\dot q(t) \eta(t) + q(t)\dot \eta(t)\right)
= \velocity q(t) \eta(t) + \dot \eta(t) \ .\end{multline}

A basic computation in the full statistical bundle is the variation of the duality
pairing. The covariant derivatives in eq. \eqref{eq:exponential-derivative} and eq. \eqref{eq:mixture-derivative} are compatible with the duality pairing.

\begin{proposition}[Duality of the covariant derivatives]\label{prop:covariantduality}
For each smooth curve in the full statistical bundle,
\begin{equation}
  t \mapsto (q(t),\eta(t),w(t)) \in \fullbundleat \mu \ ,
\end{equation}
it holds
\begin{equation}  \label{eq:d-inner}
  \derivby t \scalarat {q(t)} {\eta(t)}{w(t)} = 
\scalarat {q(t)} {\mDerivby t \eta(t)}{w(t)} + \scalarat {q(t)}
{\eta(t)} {\eDerivby t w(t)} \ .
\end{equation}
\end{proposition}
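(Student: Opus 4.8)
Here is how I would approach the proof.

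\medskip

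The plan is to reduce the claim to the product rule for a continuous bilinear form, after transporting both bundle factors to a fixed origin so that the fibers stop moving with $t$. Fix an arbitrary $p \in \maxexpat\mu$ and use the compatibility of the parallel transports with the pairing, $\scalarat q{\etransport p q u}v = \scalarat p u{\mtransport q p v}$, to rewrite the pairing along the curve as
\begin{equation}
  \scalarat {q(t)}{\eta(t)}{w(t)} = \scalarat p{\mtransport {q(t)} p \eta(t)}{\etransport {q(t)} p w(t)} \ .
\end{equation}
The right-hand side pairs the two curves $t \mapsto \mtransport{q(t)}p\eta(t)$ and $t \mapsto \etransport{q(t)}p w(t)$, which live respectively in the \emph{fixed} Banach spaces $\pstar B_p$ and $B_p$ (by Prop.~\ref{portmanteaux} these are, as sets, independent of the point). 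Accordingly I read ``smooth curve in the full statistical bundle'' as the requirement that these chart expressions are $C^1$.

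Then I would differentiate by the product rule for the continuous bilinear form on $\pstar B_p \times B_p$, continuity being the integrated conjugation bound \eqref{eq:continuity-Orlicz-pairing}:
\begin{multline}
  \derivby t \scalarat p{\mtransport {q(t)} p \eta(t)}{\etransport {q(t)} p w(t)} = \scalarat p{\derivby t\mtransport {q(t)} p \eta(t)}{\etransport {q(t)} p w(t)} \\ {}+ \scalarat p{\mtransport {q(t)} p \eta(t)}{\derivby t\etransport {q(t)} p w(t)} \ .
\end{multline}
By the very definitions \eqref{eq:mixture-derivative} and \eqref{eq:exponential-derivative} of the covariant derivatives, $\derivby t\mtransport{q(t)}p\eta(t) = \mtransport p{q(t)}\mDerivby t\eta(t)$ and $\derivby t\etransport{q(t)}p w(t) = \etransport p{q(t)}\eDerivby t w(t)$; substituting and pushing the transports back onto $q(t)$ with the same compatibility identity yields \eqref{eq:d-inner}.

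For the record I would also record the transparent chart-free check. Suppressing the argument $t$, write $\scalarat{q}{\eta}{w} = \int \eta w\, q\ d\mu$ and differentiate under the integral sign to get $\int(\dot\eta w q + \eta\dot w q + \eta w\dot q)\ d\mu$. On the other side, $\mDerivby t\eta = \velocity q\,\eta + \dot\eta$ with $\velocity q = \dot q/q$ gives $\scalarat{q}{\mDerivby t\eta}{w} = \int \dot q\,\eta w\ d\mu + \int \dot\eta w q\ d\mu$, while $\eDerivby t w = \dot w - \expectat{q}{\dot w}$ gives $\scalarat{q}{\eta}{\eDerivby t w} = \int \eta\dot w q\ d\mu - \expectat{q}{\dot w}\int \eta q\ d\mu$; the last term vanishes because $\eta$ is $q$-centered, and the three surviving terms sum to the left-hand side.

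The only genuine obstacle is the analytic justification of differentiating under the integral sign — equivalently, of the product rule along the curve. This is exactly where the smoothness hypothesis and the Orlicz structure are used: the difference quotients of $\eta(t)w(t)q(t)$ must be dominated uniformly near each $t$, which follows from the conjugate H\"older bound \eqref{eq:continuity-Orlicz-pairing} applied with the probabilities $q(t)\cdot\mu$, together with Prop.~\ref{portmanteaux}, which keeps the norms of $\orliczof{\cosh_2}{q(t)}$ and $\orliczof{\cosh_2^*}{q(t)}$ equivalent to those at $p$ as $q(t)$ ranges over the (compact) parameter interval. Once the problem is phrased in a fixed chart as above, this reduces to the standard Banach-space product rule and there is nothing further to do.
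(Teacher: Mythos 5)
Your argument is correct and is essentially the paper's own proof: transport both factors to a fixed reference density $p$ via the duality identity $\scalarat q{\etransport p q u}v = \scalarat p u{\mtransport q p v}$, apply the product rule for the bilinear pairing in the fixed spaces $\pstar B_p \times B_p$, and recognize the resulting terms as the covariant derivatives via their definitions \eqref{eq:mixture-derivative} and \eqref{eq:exponential-derivative}. The additional direct computation under the integral sign and the remarks on domination are consistent supplements, not a different route.
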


\begin{proof}
  In an arbitrary reference density $p$,
  \begin{multline*}
   \derivby t \scalarat {q(t)} {\eta(t)}{w(t)} =   \derivby t
   \scalarat {p} {\mtransport {q(t)} p \eta(t)}{\etransport {q(t)} p
     w(t)} = \\ \scalarat {p} {\derivby t \mtransport {q(t)} p \eta(t)}{\etransport
     {q(t)} p w(t)} + \scalarat {p} {\mtransport {q(t)} p
     \eta(t)}{\derivby t \etransport {q(t)} p w(t)} = \\ \scalarat {q(t)}
   {\mtransport p {q(t)} \derivby t \mtransport {q(t)} p
     \eta(t)}{w(t)} + \scalarat {p} {\eta(t)}{\etransport p {q(t)} \derivby t \etransport {q(t)} p w(t)} \ ,   
 \end{multline*}
which is Eq.~\eqref{eq:d-inner}. \smartqed
\end{proof}

\begin{remark}[Riemannian derivative]
In \cite{amari|nagaoka:2000} another covariant derivative is defined for $t \mapsto (q(t),w(t)) \in \expbundleat \mu$. Because of the embedding $\expbundleat \mu \subset \mixbundleat \mu$, we can define 
\begin{equation}\label{eq:0acc}
  \hDerivby t w(t) = \frac12 \left(\mDerivby t w(t) + \eDerivby t w(t)\right) \ .
\end{equation}
The remarkable property of this derivative is its compatibility with the inner product. If $t \mapsto (q(t),v(t),w(t)) \in \tensorat 0 2 \mu$, a straightforward computation shows that
\begin{equation}
\derivby t \scalarat {q(t)} {v(t)}{w(t)} = \scalarat {q(t)} {\hDerivby t  v(t)}{w(t)} + \scalarat {q(t)} {v(t)}{\hDerivby t w(t)} \ . 
\end{equation}
\end{remark}

Let us show that both the covariant derivatives we have defined in Eq.s~\eqref{eq:exponential-derivative} and \eqref{eq:mixture-derivative} deserve their name. We adapt the presentation of \cite[sec.~2.2]{docarmo:1992}. All curves and fields are assumed to be smooth.

\begin{proposition}
Both exponential and mixture covariant derivatives satisfy the following equations.
\begin{gather}
\Derivby t (X(t) + Y(t)) = \Derivby t X(t) + \Derivby t Y(t) \\
\Derivby t f(t) v(t) = \dot f(t) X(t) + f(t) \Derivby t X(t)
\end{gather}
\end{proposition}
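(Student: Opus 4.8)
The plan is to exploit the common structure shared by both covariant derivatives: each is obtained by conjugating the ordinary time derivative with a parallel transport, namely $\Derivby t X(t) = \transport p {q(t)} \derivby t \transport {q(t)} p X(t)$, where $\transport{}{}$ denotes $\etransport{}{}$ in the exponential case (Eq.~\eqref{eq:exponential-derivative}) and $\mtransport{}{}$ in the mixture case (Eq.~\eqref{eq:mixture-derivative}). The two asserted identities — additivity in the field and the Leibniz rule with respect to a scalar function $f$ (reading $v(t)=X(t)$ in the second displayed equation) — will follow from exactly three facts, all of which hold verbatim for $\etransport{}{}$ and $\mtransport{}{}$: each transport $\transport \nu \mu$ is a \emph{linear} map of fibers; the ordinary derivative $\derivby t$ of a curve in a fixed Banach space is additive and obeys the product rule; and $\transport p {q(t)} = \left(\transport {q(t)} p\right)^{-1}$ by the cocycle property Def.~\ref{def:affine-space}(AF0).

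First I would fix a reference density $p$ and pass to the chart at $p$, so that $t \mapsto \transport {q(t)} p X(t)$ and $t \mapsto \transport {q(t)} p Y(t)$ are genuine curves in the single Banach space $B_p$, on which ordinary calculus applies. For additivity: linearity of $\transport {q(t)} p$ gives $\transport {q(t)} p (X(t)+Y(t)) = \transport {q(t)} p X(t) + \transport {q(t)} p Y(t)$; differentiating in $t$ and using additivity of $\derivby t$ in $B_p$ preserves the splitting; applying the linear map $\transport p {q(t)}$ and recalling it is the inverse of $\transport {q(t)} p$ returns $\Derivby t (X(t)+Y(t)) = \Derivby t X(t) + \Derivby t Y(t)$. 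For the Leibniz rule the same scheme applies with the scalar factored out: $\transport {q(t)} p (f(t) X(t)) = f(t)\, \transport {q(t)} p X(t)$ by linearity; the product rule in $B_p$ gives $\derivby t \bigl(f(t)\,\transport {q(t)} p X(t)\bigr) = \dot f(t)\,\transport {q(t)} p X(t) + f(t)\,\derivby t \transport {q(t)} p X(t)$; and applying $\transport p {q(t)}$ yields $\dot f(t) X(t) + f(t)\,\Derivby t X(t)$.

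The only point needing a word of care, rather than a genuine obstacle, is admissibility of the computation: one must know that $t \mapsto \transport {q(t)} p X(t)$ is differentiable in $B_p$ — which is precisely the smoothness hypothesis on the curve and the field — and that the result is independent of the auxiliary choice of $p$, a fact already implicit when the covariant derivatives were introduced, since a change of reference composes the transport with a fixed linear isomorphism and hence commutes with all the operations above. No estimate is involved; the statement is essentially the observation that conjugating $\derivby t$ by a time-dependent linear isomorphism preserves the defining algebraic identities of a covariant derivative, and since this argument never uses anything specific to the exponential or mixture transport beyond linearity and the cocycle law, it establishes both cases at once.
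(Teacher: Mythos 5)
Your proposal is correct and follows essentially the same route as the paper, which simply notes that both identities follow immediately from the definition $\Derivby t X(t) = \transport p {q(t)} \derivby t \transport {q(t)} p X(t)$. You merely make explicit the ingredients (linearity of the transports, additivity and the product rule for $\derivby t$ in the fixed fiber $B_p$, and the cocycle/inverse relation) that the paper leaves implicit.
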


\begin{proof}
Both equations follow immediately from the definitions, that is, $\Derivby t X(t) = \transport p {q(t)} \derivby t \transport {q(t)} p X(t)$.  \smartqed
\end{proof}

\begin{remark}
Let us briefly justify our unusual presentation of a classical topic such as covariant derivatives. As the manifold structure we discuss is an affine space with global charts, the geometry of the tangent bundle of the manifold follows in terms of explicitly defined parallel transports on its expression as a statistical bundle. See \cite{gibilisco|pistone:98}. Because of the non-parametric set-up and of the statistical application of interest, the covariant derivatives are operations on smooth curves.

The usual covariant derivatives of vector fields, as described, for example, in \cite[Ch.~VIII]{lang:1995}, could be defined, whether it is needed, as follows. If $F$ is a smooth section of the statistical bundle, $F(q)$ in the fiber at $q$, for each smooth curve $t \mapsto q(t)$ one could compute $\Derivby t F(q(t))$ and looks at its representation as a linear operator $d F (q(t))$ applied to $\velocity q(t)$. Here $DF$ would be the covariant derivative defined on smooth sections. As we have a specific representation of the linear operators of fibers represented precisely by the full statistical bundle $\tensorat 1 1 \mu$, we prefer to talk about covariant gradients instead of covariant derivatives.
\end{remark}

We now define the \emph{second statistical bundle} to be
\begin{equation}
\tensorat 0 3 \mu = \setof{(q,w_1,w_2,w_3)}{(q \in \maxexpat \mu,w_1,w_2,w_3 \in \expfiberat q \mu} \ ,
\end{equation}
with charts centered at $p \in \maxexpat \mu$ defined by
\begin{equation}
  s_p(q,w_1,w_2,w_3) = \left(s_p(q),\etransport q p w_1,\etransport q p w_2,\etransport q p w_3\right) \ .  
\end{equation}

The second bundle is an expression of the statistical bundle of the exponential statistical bundle, $S \expbundleat \mu$. That is, for each curve $t \mapsto \gamma(t) =
(q(t),w(t)) \in \expbundleat \mu$, we define its \emph{velocity
  at $t$} to be
\begin{equation}
  \velocity \gamma (t) =  \left(q(t),w(t),\velocity q(t),\Derivby t w(t)\right) \ ,
\end{equation}
because $t \mapsto \velocity \gamma(t)$ is a curve in the second statistical bundle and that its expression in the chart at $p$ has the last two components equal to the values given in \eqref{eq:deriv1} and \eqref{eq:deriv2}, respectively. 

For each smooth curve $t \mapsto q(t)$, the velocity of
its  lift $t \mapsto \chi(t) = (q(t),\velocity
q(t))$ is 
\begin{equation}\label{eq:velocity-2}
\velocity \chi(t) = \left(q(t),\velocity q(t), \velocity q(t), \acceleration q(t)\right) \ ,  
\end{equation}
where $\acceleration q(t)$ defines the \emph{exponential acceleration} at $t$, 
\begin{equation}\label{eq:acceleration}
\acceleration q(t) = \Derivby t \velocity q(t) = \derivby t \frac{\dot q(t)}{q(t)}   - \expectat {q(t)} {\derivby t \frac{\dot q(t)}{q(t)}} = \frac {\ddot q(t)}{q(t)} - \Big(\velocity q(t)^2 - \expectat {q(t)} {\velocity q(t)^2}\Big) \ .\end{equation}

As the two middle components of the RHS of Eq.~\eqref{eq:velocity-2} are equal, the acceleration is defined in $\tensorat 0 2 \mu$.

\begin{remark}[e-Geodesic] The acceleration defined above has the one-dimensional exponential families as a (differential) geodesic. Every exponential curve $t \mapsto q(t) = \euler_p(t u)$ has velocity $\velocity q(t) = u - d K(t u)[u]$ so that the acceleration is $\acceleration q(t) = 0$. Conversely, if one writes $v(t) = \log q(t)$, then
\begin{equation}
 0 = \acceleration q(t) = \ddot v(t) + \expectat {q(t)} {\ddot v(t)} \ ,
\end{equation}
so that $v(x;t) = t v(x) + c$.
\end{remark}

We have defined the \emph{exponential acceleration} $\eDerivby t \velocity q(t) = \acceleration
q(t)$ via exponential transport in \eqref{eq:acceleration}. Similarly, we  define the \emph{mixture acceleration}, via mixture transport, as
\begin{equation}\label{eq:macc}
 \mDerivby t \velocity q(t) = \transport p {q(t)} \derivby t  \transport {q(t)} p
\velocity q(t) = \ddot q(t)/q(t) \ .
\end{equation}

\begin{example}[The SIR model as a second order equation]
The Kermack and McKendrick SIR model is a differential equation on the positive probability densities on $\set{S,I,R}$. As an equation in  the statistical bundle is
\begin{equation*}
  \begin{cases}
    \velocity p(S;t) &= -\beta p(I;t)  \\
    \velocity p(I;t) &= \beta p(S;t) - \gamma \\
    \velocity p(R;t) &= \gamma  p(I;t)/p(R;t)
  \end{cases}
\end{equation*}

The mixture acceleration $\macc p = \ddot p / p$ as a function of $(p(t),\velocity p(t))$ is linear in each fiber:
  \begin{equation*}
    \macc p(t) = \begin{bmatrix} - \beta p(I;t) & - \beta p(I;t) & 0 \cr
      \beta p(S;t) & (\beta p(S;t) - \gamma) & 0 \cr
      0 & \gamma & 0 
    \end{bmatrix} \velocity p(t) 
  \end{equation*}
\end{example}

\subsection{Gradients on the Tensor Bundles}

Given a scalar field $F \colon \maxexpat \mu \to \R$ the \emph{gradient} of $F$ is the section $q \mapsto \Grad F(q)$ of the mixture bundle $\mixbundleat \mu$ such that for all smooth curve $t \mapsto q(t) \in \maxexpat \mu$ it holds
\begin{equation}\label{eq:naturalgradient}
  \derivby t F(q(t)) = \scalarat {q(t)} {\Grad F(q(t))}{\velocity  q(t)} \ .
\end{equation}

\begin{example}[Gradient of the entropy]
  The computation of the natural gradient does not require, in some cases, the computation in charts. For example, if conditions for existence and smoothness are satisfied, the derivative of the entropy function $\entropyof {q(t)}= \expectat 1 {q(t) \log q(t)}$ along the curve $t \mapsto q(t)$ is 
\begin{multline*}
    \derivby t \entropyof{q(t)} = -\derivby t \expectat 1 {q(t) \log q(t)} = -\expectat 1 {\dot q(t)(\log q(t) + 1)} = \\  - \expectat {q(t)} {\log q(t) \velocity q(t)} = \scalarat {q(t)} {- \log q(t) - \entropyof {q(t)}}{\velocity q(t)} \ ,
\end{multline*}
hence $\Grad \entropyof q = - \log q - \entropyof q$.

More precisely, in coordinates, we have
\begin{multline*}
    \entropyof {\euler_p(u(t))}=- \int (u(t) - K_p(u(t)) \ \euler_p(u(t)) \ d\mu = \\ - dK_p(u(t))[u(t)] + K_p(u(t)] \ . 
\end{multline*}
With cancellations and centering, the derivative is
\begin{equation*}
    d^2 K_p(u(t)][u(t),\dot u(t)] = \scalarat {q(t)}  {-\log q(t) - \expectat {q(t)} {-\log q(t)}} {\velocity q(t)} \ .
\end{equation*}
\end{example}

In a chart, the gradient is expressed as a function of the ordinary gradient $\nabla F_p$ of $F$. In the generic chart at $p$, with $q = \euler_p(u)$ and $F(q) = F_p(u)$, it holds
\begin{multline*}
\scalarat {q(t)} {\Grad F(q(t))}{\velocity q(t)} = \derivby t F(q(t)) = \derivby t F_p(u(t))
  = DF_p(u(t))[\dot u(t)] = \\ 
  DF_p(u(t))[\transport {q(t)} p \velocity q(t)] 
  = \scalarat {p} {p^{-1} \nabla F_p(u(t)) }{\transport {q(t)} p \velocity q(t)} = \\ \scalarat {q(t)} {\transport p {q(t)} p^{-1} \nabla F_p(u(t)) }{\velocity q(t)} 
  =\scalarat {q(t)} {q^{-1} \nabla F_p(u(t)) }{\velocity q(t)} =  \\ \scalarat {q(t)} {q^{-1} \nabla F_p(u(t) ) - \expectat {q(t)} {q^{-1} \nabla F_p(u(t) )} }{\velocity q(t)} \ ,
\end{multline*} 
where, in the pairing, mixture and exponential transports consistently act on the fibers.

\begin{remark}[Natural gradient] The definition of gradient above is a non-parametric version of the natural gradient introduced by S-I Amari; see \cite{amari:1998natural}. Consider a $d$-dimensional statistical model $\Theta \ni \theta \mapsto q(\theta) \in \maxexpat \mu$, $\Theta \subset \R^d$ open. The variation along the curve $\theta_i \mapsto q(\theta)$ is
  \begin{equation*}
    \pderivby {\theta_i} F(q(\theta)) = \scalarat {q(\theta(t))} {\Grad F(\theta)} {\pderivby {\theta_i} \log q(\theta)} \ .
  \end{equation*}
  Assume there is a $q(\theta)$-orthogonal projection of $\Grad F(q(\theta))$ onto the space generated by $\pderivby {\theta_1} \log q(\theta),\dots, \pderivby {\theta_d} \log q(\theta)$.Hence,
   \begin{multline*}
    \pderivby {\theta_i} F(q(\theta)) = \sum_{j=1}^d \widehat F_{j}(\theta) \scalarat {q(\theta(t))} {\pderivby {\theta_j} \log q(\theta)} {\pderivby {\theta_i} \log q(\theta)} = \\ \sum_{j=1}^d \widehat F_j(\theta) I_{ij}(\theta) \ ,
  \end{multline*}
 where the matrix $I(\theta)$ is the \emph{Fisher matrix} of the given model. The last equation presents the natural gradient $\widehat F_j(\theta)$ as the inverse Fisher matrix applied to the ordinary gradient for the parameters. Notice that the non-parametric setup clarifies an explicit set of assumptions to justify the computation.   
\end{remark}

\begin{example}[Gradient Flow of the entropy: cf. ex.~\ref{ex:running-4}]\label{entropy}
  The integral curves of the gradient flow equation
\begin{equation*}
\velocity q(t) = \Grad \entropyof {q(t)}  
\end{equation*}
are exponential families of the form $q(t) \propto
q(0)^{\euler^{-t}}$. If we write the equation in $\R^N$,
we get the quasi-linear ODE
\begin{equation*}
 \derivby t \log q(t) = -\log q(t) - \entropyof {q(t)} \ ,  
\end{equation*}
and, in turn,
\begin{equation*}
  \log q(t) = \euler^{-t} \log q(0) - \euler^{-t} \int_0^t \euler^s
  \entropyof {q(t)} \ ds \ .
\end{equation*}
The behavior as $t \to \pm \infty$ and other properties follow quickly; see \cite{pistone:2013Entropy}.

Given a section $q \mapsto F(q) \in \expfiberat q \mu$, the variation
of the entropy along the integral curves, $\velocity q(t) = F(q(t))$, is
\begin{multline*}
  \derivby t \entropyof{q(t)} =  \scalarat {q(t)} {\Grad \entropyof
    {q(t)}} {F(q(t))} \\ =  - \scalarat {q(t)} {\log q(t) + \Grad \entropyof
    {q(t)}} {F(q(t))} \ .
  \end{multline*}
  For example, the condition for entropy production becomes
  \begin{equation*}
    \scalarat {q} {\log q + \entropyof {q(t)}} {F(q)} = \expectat q {\log q F(q)} < 0 \ .
  \end{equation*}
\end{example}

The definition of the natural gradient extends to functions defined on the full statistical bundle $\fullbundleat \mu$. Consider a real function $F \colon \fullbundleat \mu \times \mathcal D \to \R$, where $\mathcal D$ is a domain of $\R^k$. For a generic smooth curve
\begin{equation}
  t \mapsto (q(t),\eta(t),w(t),c(t)) \in \fullbundleat \mu \times
  \mathcal D \ ,
\end{equation}

we want to write 
\begin{multline}\label{eq:totalderivative}
  \derivby t F\big(q(t),\eta(t),w(t),c(t)\big) = \\ \scalarat {q(t)} {
    \Grad F\big(q(t),\eta(t),w(t),c(t)\big) }{\velocity q(t)} + 
\scalarat
  {q(t)} {\Derivby t \eta(t)} { \Grad_\text{m}
    F\big(q(t),\eta(t),w(t),c(t)\big)} + \\   \scalarat {q(t)} {\Grad_\text{e} F\big(q(t),\eta(t),w(t),c(t)\big)}{\Derivby t w(t)}
  + \nabla F\big(q(t),\eta(t),w(t),c(t)\big) \cdot \dot c(t) \ ,
\end{multline}
where the four components of the gradient are
\begin{equation}
\fullbundleat \mu \times \mathcal D \ni (q,\eta,w,c) \mapsto \begin{cases} (q,\Grad F\big(q,\eta,w,c\big))
  \in \mixfiberat q \mu \\
(q,\Grad_\text{m} F\big(q,\eta,w,c\big))
  \in \expfiberat q \mu \\  (q,\Grad_\text{e} F\big(q,\eta,w,c\big))
               \in \mixfiberat q \mu \\  (q,\nabla  F\big(q,\eta,w,c\big))
  \in \maxexpat \mu \times \R^k
\end{cases}
\end{equation}

\begin{proposition}\label{prop:totalderivative} 
The total derivative \eqref{eq:totalderivative} holds true, where
\begin{enumerate}
\item \label{prop:totalderivative1} $\Grad F\big(q,\eta,w,c\big)$ is the natural gradient of
  \begin{equation}
    q
  \mapsto F(q,\transport p q \zeta,\transport p q v,c) \ ,
  \end{equation}
that is, with the representation in $p$-chart
\begin{equation}
  F_p(u,\zeta,w,c) = F(\euler_p(u),\transport p {\euler_p(u)} \zeta,\transport p {\euler_p(u)} v,c) \ ,
\end{equation}
it is defined by
\begin{equation}
      \scalarat{q}{\Grad F(q,\zeta,w,c)}{\velocity q} = d_1F_p(u,\zeta,w,c) \left[\transport{q}{p} \velocity q\right] \ , \quad (q,\velocity q) \in \expbundleat \mu \ ;
  \end{equation}
\item \label{prop:totalderivative2} $\Grad_\text{m} F\big(q,\eta,w,c\big)$ and $\Grad_\text{e}
  F\big(q,\eta,w,c\big)$ are the fiber gradients;
\item $\nabla F\big(q,\eta,w,c\big)$ is the Euclidean gradient w.r.t.~the last variable.
\end{enumerate}
\end{proposition}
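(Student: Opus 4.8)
\emph{Proof proposal.} The plan is to fix an arbitrary reference density $p \in \maxexpat \mu$, pass to the global chart centred at $p$, differentiate by the ordinary chain rule in the model Banach space, and then re-read the four resulting terms intrinsically through the trivializations of the statistical bundles and the definitions of velocity and of the exponential and mixture covariant derivatives.

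First I would trivialize the curve: put $u(t) = s_p(q(t))$, $\zeta(t) = \mtransport {q(t)} p \eta(t)$, $v(t) = \etransport {q(t)} p w(t)$, so that $t \mapsto (u(t),\zeta(t),v(t),c(t))$ runs in $B_p \times \pstar B_p \times B_p \times \reals^k$ and $F(q(t),\eta(t),w(t),c(t)) = F_p(u(t),\zeta(t),v(t),c(t))$ with $F_p$ the chart expression as in part~\eqref{prop:totalderivative1}. The smoothness assumed on $F$ makes $F_p$ differentiable with continuous partial derivatives (as for the cumulant functional above), so the chain rule gives
\[
  \derivby t F_p(u(t),\zeta(t),v(t),c(t)) = d_1 F_p[\dot u(t)] + d_2 F_p[\dot\zeta(t)] + d_3 F_p[\dot v(t)] + \nabla F \cdot \dot c(t) \ ,
\]
all partials evaluated at $(u(t),\zeta(t),v(t),c(t))$, the last being already the Euclidean term.

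Next I would rewrite the three time-derivatives of the chart components: by \eqref{eq:exponential-velocity} (equivalently \eqref{eq:velocity-in-chart}), $\dot u(t) = \etransport {q(t)} p \velocity q(t)$; by the definition \eqref{eq:mixture-derivative} of the mixture covariant derivative, $\dot\zeta(t) = \mtransport {q(t)} p \mDerivby t \eta(t)$; and by \eqref{eq:exponential-derivative}, $\dot v(t) = \etransport {q(t)} p \eDerivby t w(t)$. The first term is then $d_1 F_p[\etransport {q(t)} p \velocity q(t)] = \scalarat {q(t)} {\Grad F}{\velocity q(t)}$ by the defining relation for $\Grad F$ in part~\eqref{prop:totalderivative1}. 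For the second term, the continuous functional $d_2 F_p$ on $\pstar B_p$ is represented, via $(\pstar B_p)^* = B_p$, by the element $\etransport {q(t)} p \Grad_\text{m}F$ of $B_p$ --- this is what \emph{defines} the fibre gradient $\Grad_\text{m}F \in \expfiberat {q(t)} \mu$ --- and the compatibility of the transports with the pairing, $\scalarat q {a}{b} = \scalarat p {\etransport q p a}{\mtransport q p b}$ for $a \in \expfiberat q \mu$, $b \in \mixfiberat q \mu$, yields $d_2 F_p[\mtransport {q(t)} p \mDerivby t \eta(t)] = \scalarat {q(t)} {\mDerivby t \eta(t)}{\Grad_\text{m}F}$. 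Symmetrically, $d_3 F_p$ is represented by $\mtransport {q(t)} p \Grad_\text{e}F$ and $d_3 F_p[\etransport {q(t)} p \eDerivby t w(t)] = \scalarat {q(t)} {\Grad_\text{e}F}{\eDerivby t w(t)}$. Collecting the four identifications gives exactly \eqref{eq:totalderivative}; since the left-hand side does not depend on $p$ and each gradient was defined intrinsically (the natural gradient by \eqref{eq:naturalgradient}, the two fibre gradients fibrewise), the decomposition is well posed, the consistency of the chart formulas across origins being the cocycle property Def.~\ref{def:affine-space}(AF0).

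I expect the only genuine difficulty to lie not in this bookkeeping but in the representability built into parts~\eqref{prop:totalderivative1}--\eqref{prop:totalderivative2}: as $\orliczof {\cosh_2} \mu$ is not reflexive, a continuous linear functional on an ``exponential'' fibre $B_p$ need not be represented by an element of the ``mixture'' fibre $\pstar B_p$, so $\Grad F$ and $\Grad_\text{e}F$ exist only under the (presupposed) assumption that $d_1 F_p$ and $d_3 F_p$ are so representable, whereas $\Grad_\text{m}F$ exists unconditionally because $(\pstar B_p)^* = B_p$. Once existence is granted, the remaining point --- that the representatives obtained at two different origins are related by the appropriate transport, so that $\Grad F,\Grad_\text{m}F,\Grad_\text{e}F$ are genuine sections --- follows again from AF0.
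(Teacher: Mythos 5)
Your proposal is correct and follows essentially the same route as the paper's proof: fix a reference density $p$, expand $\derivby t F_p$ by the chain rule into four partial-derivative terms, and re-identify each term intrinsically using $\dot u = \etransport {q}{p}\velocity q$, $\dot\zeta = \mtransport{q}{p}\mDerivby t \eta$, $\dot v = \etransport{q}{p}\eDerivby t w$ together with the transport--duality relation. Your closing remarks on the representability of $d_1F_p$ and $d_3F_p$ (owing to non-reflexivity of the exponential Orlicz space) and on chart-independence via the cocycle property are sound additions that the paper leaves implicit.
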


\begin{proof}
  Let us fix a reference density $p$ and express both the given function and the
generic curve in the chart at $p$. We can write the total derivative as
\begin{multline*}
  \derivby t F \big(q(t),\eta(t),w(t),c(t)\big) = \derivby t
   F_p(u(t),\zeta(t),v(t),c(t)) = \\
  D_1 F_p \big(u(t),\zeta(t),v(t),c(t)\big) \big[\dot u(t) \big] + D_2
  F_p \big(u(t),\zeta(t),v(t),c(t)\big) \big[\dot \zeta(t) \big] + \\
   D_3 F_p\big(u(t),\zeta(t),v(t),c(t)\big) \big[\dot v(t) \big] +
   D_4 F_p\big(u(t),\zeta(t),v(t),c(t)\big) \big[\dot c(t) \big] \ . 
\end{multline*}

In the equation above, $D_j$, with $j=1,\dots,4$, denotes the partial derivative with
respect to the $j$-th variable of $F_p$,which is
intended to provide a linear
operator represented by the appropriate dual vector, that is,
the value of the proper gradient.

The last term of the total derivative does not require any comment, and we can write it as the ordinary
Euclidean gradient:
\begin{equation*}
  D_4 F_p\big(u(t),\zeta(t),v(t),c(t)\big) \big[\dot c(t) \big] =
  \nabla F_p\big(u(t),\zeta(t),v(t),c(t)\big) \cdot \dot c(t) \ .
\end{equation*}

Let us consider the second and third terms together. It is a
computation of the fiber derivative and does not involve the
representation in the chart. Given
$\alpha \in \mixfiberat p \mu$ and $\beta \in \expfiberat p \mu$, that
is, $(\alpha,\beta) \in \fullfiberat p \mu$, we have
\begin{multline*}
  D_2 F_p(u,\zeta,v,c) [\alpha] + D_3 F_p(u,\zeta,v,c) [\beta] =
  \left. \derivby t F_p(u,\zeta+t\alpha,w+t\beta,c) \right|_{t=0} = \\
  \left. \derivby t F(q,\eta +t \transport p q \alpha,v + t
    \transport p q \beta,c) \right|_{t=0} = \mathbb F
  F(q,\eta,w,c)[(\transport p q \alpha,\transport p q
  \beta)] = \\
  \scalarat q {\transport p q \alpha} {\Grad_\text{m} F(q,\eta,w,c)}
  + \scalarat q {\Grad_\text{e} F(q,\eta,w,c)} {\transport p q \beta}
  \ ,
\end{multline*}
where $\mathbb F$ denotes the fiber derivative in $\fullfiberat q
\mu$, which is expressed, in turn, with the relevant gradients. Consider that the inner
product always has $\mixfiberat q \mu$ first, followed by $\expfiberat
q \mu$ and that the subscript to the $\Grad$ symbol displays which
component of the full bundle is considered.

We have that
\begin{equation*}
  \Derivby t w(t) = \transport p {q(t)} \dot v(t) \ , \quad \Derivby
  t \eta (t) = \transport p {q(t)} \dot \zeta(t) \ .
\end{equation*}
Putting together all the contributions, we have proved that
\begin{multline*}
  \derivby t F\big(q(t), \eta(t),w(t),c(t)\big) =  D_1 F_p
                                                   \big(u(t),\zeta(t),v(t),c(t)\big)
                                                  \big[\transport
                                                   {\euler_p(u(t))} p
                                                   \velocity q(t)  \big]  + \\
                                                 \scalarat
                                                   {q(t)} {\Derivby t \eta(t)} { \Grad_\text{m}
                                                   F\big(q(t),\eta(t),w(t),c(t)\big)} + \\  \scalarat {q(t)} {\Grad_\text{e} F\big(q(t),\eta(t),w(t),c(t)\big)}{\Derivby t w(t)}
                                                                                               + \\ \nabla F\big(q(t),\eta(t),w(t),c(t)\big) \cdot \dot c(t) \ ,
\end{multline*}

To identify the first term in the total derivative above, consider the geodesic'' case,
\begin{equation*}
  q(t) = \euler_p(u(t)), \quad \eta(t) =
  \transport p {\euler_p(u(t))} \zeta,  \quad w(t) = \transport p {\euler_p(u(t))} v, \quad c(t) = c \ ,
\end{equation*}
so that the first term reduces to $D_1 F_p(u(t),\zeta,v,c) [\transport
                                                   {\euler_p(u(t))} p
                                                   \velocity q(t) ]$. It
follows that the proper way to compute the first gradient is to
consider the function on $\maxexpat \mu$ defined by 
 \begin{equation*}
 q \mapsto F_{\zeta,v,c}(q) = F(q,\transport p q \zeta,\transport p q v,c)
 \end{equation*}
which  has a natural gradient whose chart representation is precisely
that first term. \smartqed
\end{proof}

We have concluded the computation of the total derivative of a parametric function on the full bundle. Notice that the computation of the natural gradient for $\Grad_\text{m} F\big(q,\eta,w,c\big)$ and $\Grad_\text{e}
  F\big(q,\eta,w,c\big)$ is done by fixing the variables in the fibers to be translations of fixed ones.

We are going to discuss the following examples of gradient flow on the full statistical bundle: the scalar function $L(q,w) = \frac12 \scalarat q w w$; the \emph{cumulant} function $L(q,w) = K_q(w)$; the \emph{conjugate cumulant} function $H(q,\eta) = \expectat q {(1+\eta) \log (1 + \eta)}$.

\begin{example}[Scalar function $\frac12 \scalarat q w w$]\label{ex:quadratic-lagrangian}
On the statistical bundle, consider the scalar function given by the pairing 
\begin{equation*}
 L(q,w) = \frac12 \scalarat q w w   \ . 
\end{equation*}

In chart, we have  $L_p(u,v) = L\left(\euler_p(u),\etransport p {\euler_p(u)} v\right) = d^2K_p(u)[v,v]$  from Eq.~\eqref{eq:K3}, where $K_p(u)$ is the expression in the chart at $p$ of Kullback–Leibler divergence of $q\mapsto \KL p q$. 

From Eq.~\eqref{eq:K4}, we write the derivative with respect to $u$ in the direction $h$ as
  \begin{multline*}
\frac 12 d^3 K_p(u)[v,v,h] = \frac12 \expectat {\euler_p(u)}{\left(v -  \expectat {\euler_p(u)} v\right)^2 \etransport p {\euler_p(u)} h} = \\ \frac 12 \scalarat q {w^2 - \expectat q w^2}{ \etransport p q h}
  \end{multline*}
  which, in turn, identifies the gradient as $\Grad \frac12 \scalarat q w w = \frac12 (w^2 - \expectat q {w^2}) \in \mixbundleat \mu$. The exponential gradient is $\Grad_{\text{e}} \frac 12 \scalarat q w w = w$.
\end{example}

\begin{example}[Cumulant functional]\label{ex:cumulant-lagrangian}
If  $L(q,w) = K_q(w)$, then  
  \begin{multline*}
  L_p(u,v) =
    K_{\euler_p(u)}\left(\etransport p {\euler_p(u)} v\right) = 
    \log \expectat {\euler_p(u)}  {\euler^{v - \expectat {\euler_p(u)} v}}x = \\
    \log\expectat p {\euler^{u - K_p(u) + v - \expectat {\euler_p(u)} v}} =
    \log\expectat p {\euler^{u + v - K_p(u)- dK_p(u)[v] }} = \\
    K_p(u+v) - K_p(u) - dK_p(u)[v] \ .
  \end{multline*}
The derivative with respect to $u$ in the direction $h$ is
  \begin{multline*}
    dK_p(u+v)[h] - dK_p(u)[h] - d^2K_p(u)[v,h] = \\ \expectat
    {\euler_p(u+v)} h - \expectat {\euler_p(u)} h - \expectat {\euler_p(u)}
    {\left(\transport p {\euler_p(u)} v\right) \left(\transport p {\euler_p(u)} h\right) } =
    \\ \expectat {\euler_p(u)} {\frac {\euler_p(u+v)}{\euler_p(u)} h} - \expectat {\euler_p(u)} h -
    \expectat {\euler_p(u)} {w \left(\transport p {\euler_p(u)} h\right)} = \\
 \expectat {\euler_p(u)} {\frac {\euler_p(u+v)}{\euler_p(u)} \left(\transport p
     {\euler_p(u)} h\right)} - \scalarat  q {w} {\transport p {\euler_p(u)} h}   \ .
  \end{multline*}
 The expected value of the factor $\frac {\euler_p(u+v)}{\euler_p(u)}$ in the first term of the RHS equals
  \begin{multline*}
    \expectat {\euler_p(u)} {\euler^{v - (K_p(u+v)-K_p(u))}
      \left(\etransport p {\euler_p(u)} h\right)} = \\ \expectat {\euler_p(u)}
    {\euler^{v - (K_{\euler_p(u)}(\transport p {\euler_p(u)} v) + dK_p(u)[v])}
      \left(\transport p
        {\euler_p(u)} h\right)} = \\
    \expectat {\euler_p(u)} {\euler^{\etransport p {\euler_p(u)} v -
        K_{\euler_p(u)}(\transport p {\euler_p(u)} v)} \left(\etransport p
        {\euler_p(u)} h\right)} = \\
    \expectat q {\euler^{w - K_q(w)} \left(\transport p {\euler_p(u)}
        h\right)} = \scalarat q {\frac{e_q(w)}{q} - 1}{\etransport p
        {\euler_p(u)} h}  \ .
  \end{multline*}
In conclusion, $\Grad K_q(w) = \left(\frac{e_q(w)}{q} - 1\right) -
w$. The expectation gradient is
\begin{equation*}
  \Grad_{\text{e}} K_q(w) = \frac{\euler_q(w)}{q} - 1 \ .
\end{equation*}
\end{example}

\begin{example}[Conjugate cumulant functional]\label{ex:cumulant-hamiltonian}
The conjugate cumulant functional
\begin{equation*}
 \mixbundleat \mu \colon (q,\eta) \mapsto H(q,\eta) = \expectat q {(1+\eta) \log (1 + \eta)} \ , \quad \eta > -1 \ ,   
\end{equation*}
is the Legendre transform of the cumulant function $K_q$,
\begin{equation*}
  H(q,\eta) = \scalarat q \eta {(\Grad K_q)^{-1}(\eta)} -
  K_q\left((\Grad K_q)^{-1}(\eta)\right) \ .
\end{equation*}
In particular, the fiber gradient of $H_q$ is $\Grad_{\text{m}} H(q,\eta) = \log(1+\eta) - \expectat q {\log(1+\eta)}$ which is the inverse of the fiber gradient of $K_q$. Notice that $r = (1+\eta)q$ is a density, and $\KL r q = H(q,\eta)$. 

Let us compute the gradient. The expression of the conjugate cumulant functional in the chart at $p$ is 
\begin{multline*}
 H_p(u,\zeta) = \expectat {\euler_p(u)} {\left(1 + \frac p {\euler_p(u)} \zeta \right) \log \left(1 + \frac p {\euler_p(u)} \zeta \right)}  = \\ \expectat p {\left(\frac{\euler_p(u)} p  + \zeta \right) \log \left(1 + \frac p {\euler_p(u)} \zeta \right)} \ .  
\end{multline*}
As, for $h \in \expfiberat p \mu$,
\begin{gather*}
  d \left(\frac{\euler_p(u)} p  + \zeta \right) [h]= \frac{\euler_p(u)}p
  \mtransport p {\euler_p(u)} h \ , \\  d \left(1 + \frac
    p {\euler_p(u)} \zeta \right) [h]= - \frac p{\euler_p(u)} \zeta \mtransport p
  {\euler_p(u)}     h \ ,
\end{gather*}
the derivative of $H_p$ with respect to $u$ in the direction $h$ is given by
\begin{multline*}
  DH_p(u,\zeta) [h]= \expectat p {\left(\frac{\euler_p(u)}p \mtransport p
      {\euler_p(u)} h\right) \log \left(1 + \frac p {\euler_p(u)}
      \zeta \right)} - \\
  \expectat p {\left(\frac{\euler_p(u)} p + \zeta \right) \left(1 + \frac p
      {\euler_p(u)} \zeta \right)^{-1}\frac p{\euler_p(u)} \zeta \mtransport p
    {\euler_p(u)}     h} = \\
  \expectat q {\log(1+\eta)  \mtransport p {\euler_p(u)} h} - \expectat q
  {\zeta \transport p {\euler_p(u)} h}\ ,
\end{multline*}
hence $\Grad H(q,\eta) = \log(1+\eta) - \expectat q {\log(1+\eta)} - \eta$.
\end{example}

\subsection{Lagrangian and Hamiltonian formalisms on the full Statistical Bundle}
\label{sec:mechanics}
The dually affine geometry of the statistical bundle is naturally well suited for describing the dynamics of probability densities in a Lagrangian and Hamiltonian formalism; see \cite{pistone:2018lagrange, chirco|malago|pistone:2020}. This is apparent from the previous examples.

The Lagrangian formulation of mechanics derives the fundamental laws of force balance from variational principles. In our context, the exponential model $\maxexpat \mu$ corresponds to the configuration space, while the statistical bundle is associated with the velocity phase space.
For a given smooth curve $q \colon [0,1] \ni t \mapsto q(t)$ in $\maxexpat \mu$ and its lift $t \mapsto (q(t),\velocity q(t)) \in \expbundleat \mu$, we introduce a generic Lagrangian function
\begin{equation}
  L(q(t),\velocity q(t)) \colon \expbundleat \mu \times [0,1] \to \R
\end{equation}
and define an action as the integral of the Lagrangian along the curve over the fixed time interval $[0,1]$,
\begin{equation}
q \mapsto   \mathcal{A}(q) = \int_{0}^{1} L(q(t),\velocity q(t),t) \ dt \ .
\end{equation}

Hamilton’s principle states that this function has a critical point at a solution within the space of curves on $\maxexpat \mu$. We have

\begin{proposition}[Euler-Lagrange equation]
  If $q$ is an extremal of the action integral, then 
  \begin{equation}\label{eq:Euler-Lagrange}
  \Derivby t \Grad_\text{e} L(q(t),\velocity q(t),t) = \Grad L(q(t),\velocity q(t),t) \ .
\end{equation}
\end{proposition}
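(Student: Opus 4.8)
The plan is to carry the classical derivation of the Euler--Lagrange equations into a fixed affine chart and then re-read the outcome through the intrinsic gradients and covariant derivatives of the statistical bundle. First I would fix a reference density $p\in\maxexpat\mu$, write $u(t)=s_p(q(t))\in B_p$, and recall from \eqref{eq:exponential-velocity} that in the bundle chart $s_p(q,w)=(s_p(q),\etransport q p w)$ the lift $t\mapsto(q(t),\velocity q(t))$ has expression $t\mapsto(u(t),\dot u(t))$. Setting
\[
 L_p(u,v,t)=L\big(\euler_p(u),\,\etransport p {\euler_p(u)} v,\,t\big)\ ,
\]
the action becomes the ordinary integral $\mathcal{A}(q)=\int_0^1 L_p(u(t),\dot u(t),t)\,dt$ on the Banach space $B_p$, with $t$ entering merely as a passive parameter.

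Next I would run the calculus-of-variations argument in this chart. A fixed-endpoint variation reads, in the chart, $u_\epsilon(t)=u(t)+\epsilon h(t)$ with $h\colon[0,1]\to B_p$ smooth and $h(0)=h(1)=0$; for small $\epsilon$ this stays inside the open set $s_p(\maxexpat\mu)$ because $h$ is bounded on $[0,1]$. Differentiating under the integral sign and integrating by parts the term in $\dot h$, the boundary contribution vanishing since $h(0)=h(1)=0$, extremality of $\mathcal{A}$ at $q$ yields, for all admissible $h$,
\[
 \int_0^1\Big(D_1 L_p(u(t),\dot u(t),t)[h(t)]-\big(\derivby t D_2 L_p(u(t),\dot u(t),t)\big)[h(t)]\Big)\,dt=0\ .
\]
Choosing $h(t)=\varphi(t)\,b$ for scalar bump functions $\varphi$ and fixed $b\in B_p$, and invoking the fundamental lemma of the calculus of variations (the integrand being continuous in $t$ by smoothness), I would obtain the chart identity of linear functionals on $B_p$,
\[
 D_1 L_p(u(t),\dot u(t),t)=\derivby t D_2 L_p(u(t),\dot u(t),t)\ .
\]

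Then I would translate both sides back. Differentiating $L_p$ in its first slot holds the second slot fixed, that is, keeps the middle argument of $L$ equal to the exponential transport of a frozen vector: this is precisely the situation of Proposition~\ref{prop:totalderivative}\eqref{prop:totalderivative1}, so with $q=\euler_p(u)$ and $w=\etransport p q v$ one has $D_1 L_p(u,v,t)[h]=\scalarat q {\Grad L(q,w,t)}{\etransport p q h}$, where $\Grad L(q,w,t)\in\mixfiberat q\mu$ is the natural gradient \eqref{eq:naturalgradient}. Differentiating $L_p$ in its second slot is the exponential-fibre derivative of $L$ pulled back by $\etransport p q$, so $D_2 L_p(u,v,t)[h]=\scalarat q {\Grad_\text{e} L(q,w,t)}{\etransport p q h}$, with $\Grad_\text{e} L(q,w,t)\in\mixfiberat q\mu$ as well. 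For the time derivative of the latter along the curve I would set $\eta(t)=\Grad_\text{e} L(q(t),\velocity q(t),t)$ and $w(t)=\etransport p {q(t)} h$; since $\eDerivby t w(t)=\etransport p {q(t)}\derivby t\big(\etransport {q(t)} p\,\etransport p {q(t)} h\big)=0$ by the cocycle property, Proposition~\ref{prop:covariantduality} gives
\[
 \derivby t D_2 L_p(u(t),\dot u(t),t)[h]=\derivby t\scalarat{q(t)}{\eta(t)}{w(t)}=\scalarat{q(t)}{\mDerivby t\Grad_\text{e} L(q(t),\velocity q(t),t)}{\etransport p {q(t)} h}\ .
\]
Substituting both expressions into the chart identity, and using that $\etransport p {q(t)}\colon B_p\to\expfiberat{q(t)}\mu$ is onto while the pairing $\mixfiberat{q(t)}\mu\times\expfiberat{q(t)}\mu$ separates points (the exponential fibre being the dual of the mixture fibre), I would conclude $\mDerivby t\Grad_\text{e} L(q(t),\velocity q(t),t)=\Grad L(q(t),\velocity q(t),t)$, which is \eqref{eq:Euler-Lagrange} with $\Derivby t$ read, as the context demands, as the mixture covariant derivative $\mDeriv$ (the natural choice, since $\Grad_\text{e} L$ and $\Grad L$ are both sections of $\mixbundleat\mu$).

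The hard part will be the intrinsic bookkeeping of the transports: one must verify that the plain time derivative along the curve of the chart momentum $D_2 L_p$ is the \emph{mixture} covariant derivative of $\Grad_\text{e} L$ and not the exponential one. This is the step that genuinely uses the dually affine structure, and it relies on the compatibility relation of Proposition~\ref{prop:covariantduality} together with the vanishing of $\eDeriv$ on a transported constant. The second point requiring care is the infinite-dimensional fundamental lemma of the calculus of variations, which I would handle as above by testing against $\varphi(t)\,b$ and using continuity in $t$.
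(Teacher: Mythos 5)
Your proof is correct and follows essentially the same route as the paper's: pass to the chart at $p$, identify $D_1L_p$ and $D_2L_p$ with $\Grad L$ and $\Grad_\text{e}L$ via the total-derivative proposition, and convert $\derivby t D_2L_p[h]$ into the covariant derivative of the momentum using the duality formula together with $\Derivby t \etransport p {q(t)} h = 0$. The only difference is that you supply the fixed-endpoint variational derivation of the chart Euler--Lagrange identity (including the infinite-dimensional fundamental lemma), a step the paper simply asserts.
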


\begin{proof}
Let us express the action integral in the exponential chart $s_p$ centered at $p$. If $q(t) = \euler^{u(t)-K_p(u(t))}\cdot p$, with $t \mapsto u(t) \in \expfiberat p \mu$, we have
\begin{equation*}
L(q(t),\velocity q(t),t) = L\left(\euler_p(u(t)),\etransport p {\euler_p(u(t))} \dot
u(t),t\right) = L_p(u(t),\dot u(t),t) \ , 
\end{equation*}
so that the expression of the Euler-Lagrange equation in a chart is given by 
\begin{equation}\label{eq:EL-chart}
   D_1 L_p(u(t),\dot u(t),t)[h] = \derivby t D_2 L_p(u(t),\dot u(t),t)[h] \ . \quad t \in [0,1] \ , h \in \expfiberat p \mu \ . 
\end{equation}
Consider first the RHS of eq.~\eqref{eq:EL-chart}. From Proposition \eqref{prop:totalderivative} we have
  \begin{equation*}
  D_1 L_p(u(t),\dot u(t),t)[h] = \scalarat {q(t)}
  {\Grad L(q(t),\velocity q(t),t)}{\etransport p {q(t)} h} \ .
  \end{equation*}
On the left-hand side, we have
  \begin{equation*}
  D_2 L_p(u(t),\dot u(t))[h] = \scalarat {q(t)}
  {\Grad_\text{e}L(q(t),\velocity q(t),t)}{\etransport p {q(t)} h} \ .
  \end{equation*}
The derivation formula of \eqref{eq:d-inner} gives
\begin{multline*}
\derivby t   D_2 L_p(u(t),\dot u(t),t)[h] = \derivby t \scalarat {q(t)}
  {\Grad_\text{e} L(q(t),\velocity q(t),t)}{\etransport p {q(t)} h} = \\ \scalarat {q(t)}
  {\Derivby t \Grad_\text{e} L(q(t),\velocity q(t),t)}{\etransport p {q(t)} h} + \scalarat{q(t)}{\Grad_\text{e} L(q(t),\velocity q(t),t)}{\Derivby t \etransport{p}{q(t)} h} = \\ \scalarat {q(t)}
  {\Derivby t \Grad_\text{e} L(q(t),\velocity q(t),t)}{\etransport p {q(t)} h} \ ,
\end{multline*}
because $\Derivby t \etransport{p}{q(t)} h = 0$. As $h$ is arbitrary, the conclusion follows.
\end{proof}

\subsubsection{Hamiltonian mechanics}
\label{sec:legendre-transform}
At each fixed density $q \in \maxexpat \mu$, and each time $t$, the
partial mapping
$\expfiberat q \mu \ni w \mapsto L_{q,t}(w) = L(q, w, t)$ is defined on
the vector space $\expfiberat q \mu$, and its gradient mapping in the
duality of $\mixfiberat q \mu \times \expfiberat q \mu$ is
$w \mapsto \Grad_\text{e} L(q,w,t)$. The standard Legendre transform
the argument provides the intrinsic form of the Hamilton equations under
the following assumption.

\begin{assumption}\label{h1}
  We restrict our attention to Lagrangians such that the
  fiber gradient mapping at $q$,
  $w \mapsto \eta = \Grad_\text{e} L_q(w)$ is a 1-to-1 mapping from
  $\expfiberat q \mu$ to $\mixfiberat q \mu$. In particular, this true
  when the partial mappings $w \mapsto L_q(w)$ are strictly convex for
  each $q$. 
\end{assumption}

In the finite-dimensional context, this assumption is
equivalent to the assumption that the fiber gradient is a
diffeomorphism of the statistical bundles
$\Grad_{\text{2}} L \colon \expbundleat \mu \to \mixbundleat
\mu$. This is related to the properties of \emph{regularity} and
hyper-regularity, cf. \cite[sec.~3.6]{abraham|marsden:1978}. The bilinear form
$\mixfiberat q \mu \times \expfiberat q \mu \ni (\eta,w) \mapsto
\scalarat q \eta w = \expectat q {\eta w}$ will always be written in
this order. The Legendre transform of $L_{q,t}$ is defined
for each $\eta \in \mixfiberat q \mu$ of the image of $\Grad_\text{e}
L(q, \cdot, t)$, so that the Hamiltonian is
\begin{equation}
  \label{eq:derived-Hamiltonian}
  H(q,\eta,t) = \scalarat q {\eta}{(\Grad_\text{e} L_{q,t})^{-1}(\eta)} - L(q,(\Grad_\text{e}
  L_{q,t})^{-1}(\eta)) \ .
\end{equation}
If $t \mapsto q(t)$ a solution of Euler-Lagrange
\eqref{eq:Euler-Lagrange}, the curve
$t \mapsto \zeta(t) = (q(t),\eta(t))$ in $\mixbundleat \mu$, where
$\eta(t) = \Grad_\text{e} L(q(t),\velocity q(t),t)$ is the
\emph{momentum}. The mixture bundle $\mixbundleat \mu$ then plays the role of the cotangent bundle in mechanics.

\begin{proposition}[Hamilton equations]\label{prop:Hamiltonequation}
  When \eqref{h1} holds, the momentum curve satisfies the \emph{Hamilton equations}, 
\begin{equation}
  \label{eq:Hamilton}
  \left\{\begin{aligned}
    \Derivby t \eta(t) &= - \Grad H(q(t),\eta(t),t) \\
     \velocity q(t) &= \Grad_\text{m} H(q(t),\eta(t),t) .
  \end{aligned}\right.
\end{equation}
Moreover,
\begin{equation}
  \label{eq:conservationH}
  \derivby t H(q(t),\eta(t),t) = \pderivby t H(q(t),\eta(t),t) \ .
\end{equation}
\end{proposition}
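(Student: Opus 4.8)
The plan is to reduce the assertion to the classical Banach-space Legendre transform duality, performed in a fixed exponential chart $s_p$, and then to combine it with the Euler--Lagrange equation \eqref{eq:Euler-Lagrange}. Fix a reference density $p$ and, for a solution curve $t \mapsto q(t) = \euler_p(u(t))$ with lift $t \mapsto (q(t),\velocity q(t))$, set
\begin{equation}
  L_p(u,v,t) = L\left(\euler_p(u),\etransport p {\euler_p(u)} v,t\right) \ , \qquad H_p(u,\zeta,t) = H\left(\euler_p(u),\mtransport p {\euler_p(u)} \zeta,t\right) \ .
\end{equation}
As in the proof of the Euler--Lagrange proposition, $D_2 L_p(u,v,t)$ is, through the $p$-pairing, the chart representative $\mtransport q p \Grad_\text{e} L(q,w,t)$ of the fiber gradient, and $D_1 L_p(u,v,t)$ is the chart representative of $\Grad L(q,w,t)$ in the sense of Proposition~\ref{prop:totalderivative}\eqref{prop:totalderivative1}; the same holds for $H_p$. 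Since the parallel transports are adjoint with respect to the dual pairings, the defining relation $\eta = \Grad_\text{e} L(q,w,t)$ becomes $\zeta = D_2 L_p(u,v,t)$ in the chart, and the intrinsic Hamiltonian \eqref{eq:derived-Hamiltonian} is, chart by chart, the ordinary Legendre transform $H_p(u,\zeta,t) = \scalarat p \zeta {v(u,\zeta,t)} - L_p(u,v(u,\zeta,t),t)$, where $v = v(u,\zeta,t)$ solves $\zeta = D_2 L_p(u,v,t)$. Assumption~\ref{h1} guarantees that $v(u,\zeta,t)$ is well defined; assuming moreover strict convexity and $C^2$-regularity of $v \mapsto L_p(u,v,t)$, the implicit function theorem makes it smooth.

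The three classical identities for the Legendre transform then hold in the chart: $D_2 H_p(u,\zeta,t) = v(u,\zeta,t)$, $D_1 H_p(u,\zeta,t) = -D_1 L_p(u,v(u,\zeta,t),t)$, and $\pderivby t H_p(u,\zeta,t) = -\pderivby t L_p(u,v(u,\zeta,t),t)$, each obtained by differentiating $H_p$ and using $\zeta = D_2 L_p$ to cancel the contribution of the variation of $v$. Translating the first identity back to the bundle gives $\Grad_\text{m} H(q,\eta,t) = (\Grad_\text{e} L_{q,t})^{-1}(\eta)$; evaluated along the lift, where $\eta(t) = \Grad_\text{e} L(q(t),\velocity q(t),t)$, this is precisely $\velocity q(t) = \Grad_\text{m} H(q(t),\eta(t),t)$, the second Hamilton equation (and this uses only the Legendre correspondence, not \eqref{eq:Euler-Lagrange}). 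Translating the second identity gives $\Grad H(q,\eta,t) = -\Grad L\big(q,\Grad_\text{m} H(q,\eta,t),t\big)$, hence along the lift $\Grad H(q(t),\eta(t),t) = -\Grad L(q(t),\velocity q(t),t)$. The Euler--Lagrange equation \eqref{eq:Euler-Lagrange} --- whose covariant derivative, by the computation in its proof, is the mixture one --- reads $\mDerivby t \Grad_\text{e} L(q(t),\velocity q(t),t) = \Grad L(q(t),\velocity q(t),t)$, that is, $\mDerivby t \eta(t) = -\Grad H(q(t),\eta(t),t)$, which is the first Hamilton equation.

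For the energy identity \eqref{eq:conservationH} I would apply the total derivative formula of Proposition~\ref{prop:totalderivative} to $H$ along $t \mapsto (q(t),\eta(t),t) \in \mixbundleat \mu \times [0,1]$:
\begin{multline}
  \derivby t H(q(t),\eta(t),t) = \scalarat {q(t)}{\Grad H(q(t),\eta(t),t)}{\velocity q(t)} + \\ \scalarat {q(t)}{\mDerivby t \eta(t)}{\Grad_\text{m} H(q(t),\eta(t),t)} + \pderivby t H(q(t),\eta(t),t) \ ,
\end{multline}
and then substitute the two Hamilton equations: $\velocity q(t) = \Grad_\text{m} H$ and $\mDerivby t \eta(t) = -\Grad H$ make the first two terms opposite, so they cancel. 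I expect the genuine difficulties to be bookkeeping ones: keeping the mixture and exponential fibers --- and the fixed order of the duality pairing --- straight throughout, and verifying carefully that the \emph{position} gradient of $H$ in the bundle, which by definition holds the mixture fiber variable fixed as a parallel translate, corresponds in the chart to $D_1 H_p$ with $\zeta$ held fixed, so that the classical identity $D_1 H_p = -D_1 L_p$ transfers to exactly the relation between $\Grad H$ and $\Grad L$ that matches the $\Grad L$ occurring in Euler--Lagrange. The only analytic input beyond this is the smoothness of the Legendre dual map $v(u,\zeta,t)$, covered by Assumption~\ref{h1}.
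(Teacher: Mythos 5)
Your argument is correct and is precisely the ``standard Legendre transform argument'' the paper invokes but does not write out: reduction to the classical chart-wise Legendre identities, transfer back to the bundle via the adjointness of $\etransport p q$ and $\mtransport q p$, substitution of \eqref{eq:Euler-Lagrange} (with the mixture covariant derivative, correctly identified) for the first equation, and cancellation in the total-derivative formula of Proposition~\ref{prop:totalderivative} for \eqref{eq:conservationH}. The only point worth flagging is one you already flag yourself: Assumption~\ref{h1} gives bijectivity of the fiber gradient but not the invertibility of $D_2^2L_p$ needed for the implicit function theorem, so the smoothness of $\zeta \mapsto v(u,\zeta,t)$ is a genuine extra hypothesis (the paper's own remark on regularity versus hyper-regularity concedes the same gap).
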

The special intrinsic form of the Hamilton equations follows from the covariant derivatives and the gradients of the statistical bundles.

\begin{example}[Mechanics of $\frac12 \scalarat q w w$]\label{ex:mech-quadratic-lagrangian}\label{eg5}
The scalar function $\frac12 \scalarat q w w$ of Example \ref{ex:quadratic-lagrangian} corresponds to the kinetic energy Lagrangian in mechanics. In this case, as first shown in \cite{pistone:2018lagrange}, the Euler–Lagrange equations are equivalent to the equations of geodesic motion, whose solution coincides with the one-dimensional exponential families.

Now, if $L(q,w) = \frac12 \scalarat q w w$ is our Lagrangian, then via Legendre transform, we obtain the Hamiltonian $H(q,\eta) = \frac 12 \scalarat q \eta \eta$. The gradients are
 
  \begin{align*}
    \Grad H(q,\eta) &= -\frac12 \left(\eta^2 - \expectat q {\eta^2}\right)\\
    \Grad_{\text{m}} H(q,\eta) &= \eta\\
    \Grad L(q,w) &= \frac12 (w^2 - \expectat q {w^2}) \\
    \Grad_{\text{e}} L(q,w) &= w
  \end{align*}

  For $\velocity q=w \in \mixbundleat \mu$, the Euler-Lagrange equation is
  \begin{equation*}
    \Derivby t \velocity q(t) = \frac12 \left(\velocity q(t)^2 -
      \expectat {q(t)} {\velocity q(t)^2}\right) \ ,
  \end{equation*}
where the covariant derivative is computed in $\mixbundleat \mu$, that
is, $\Derivby t \velocity q(t) = \ddot q(t) / q(t)$. In terms of the
exponential acceleration $\acceleration q(t) = \ddot q(t) / q(t) - \left(\velocity q(t)^2 -
      \expectat {q(t)} {\velocity q(t)^2}\right)$, the Euler-Lagrange
      equation reads
      \begin{equation*}
        \acceleration q(t) = - \frac12 \left((\velocity q(t))^2 -
      \expectat {q(t)} {(\velocity q(t))^2}\right) \ ,
      \end{equation*}
consistently with the result in Example 22.

The Hamilton equations are
\begin{equation*}
  \left\{
    \begin{aligned}
      \Derivby t \eta(t) &= \frac12 \left(\eta^2 - \expectat q {\eta^2}\right) \\
      \velocity q(t) &= \eta(t)
    \end{aligned}
\right. \ ,
\end{equation*}
with the covariant derivative again computed in $\mixbundleat \mu$.

The conserved energy is
\begin{equation*}
  H(q(t),\eta(t)) = \frac12 \scalarat {q(t)} {\velocity q(t)}{\velocity q(t)} = \frac12
  \expectat 1 {\frac {{\dot q(t)}^2}{q(t)}} \ .
\end{equation*}
which reflects in the conservation of the \emph{Fisher information}. 

\end{example}

\bibliographystyle{spmpsci}

\begin{thebibliography}{10}
\providecommand{\url}[1]{{#1}}
\providecommand{\urlprefix}{URL }
\expandafter\ifx\csname urlstyle\endcsname\relax
  \providecommand{\doi}[1]{DOI~\discretionary{}{}{}#1}\else
  \providecommand{\doi}{DOI~\discretionary{}{}{}\begingroup
  \urlstyle{rm}\Url}\fi

\bibitem{abraham|marsden:1978}
Abraham, R., Marsden, J.E.: Foundations of mechanics.
\newblock Benjamin/Cummings Publishing Co., Inc., Advanced Book Program,
  Reading, Mass. (1978).
\newblock Second edition, revised and enlarged, With the assistance of Tudor
  Ra\c tiu and Richard Cushman

\bibitem{abraham|marsden|ratiu:1988}
Abraham, R., Marsden, J.E., Ratiu, T.: Manifolds, tensor analysis, and
  applications, \emph{Applied Mathematical Sciences}, vol.~75, second edn.
\newblock Springer-Verlag (1988).
\newblock \doi{10.1007/978-1-4612-1029-0}.
\newblock \urlprefix\url{http://dx.doi.org/10.1007/978-1-4612-1029-0}

\bibitem{absil|mahony|sepulchre:2008}
Absil, P.A., Mahony, R., Sepulchre, R.: Optimization algorithms on matrix
  manifolds.
\newblock Princeton University Press (2008).
\newblock With a foreword by Paul Van Dooren

\bibitem{adams|fournier:2003}
Adams, R.A., Fournier, J.J.F.: Sobolev spaces, \emph{Pure and Applied
  Mathematics (Amsterdam)}, vol. 140, second edn.
\newblock Elsevier/Academic Press, Amsterdam (2003)

\bibitem{aliprantis|border:2006}
Aliprantis, C.D., Border, K.C.: Infinite dimensional analysis, third edn.
\newblock Springer, Berlin (2006).
\newblock A hitchhiker's guide

\bibitem{amari:85}
Amari, S.: Differential-geometrical methods in statistics, \emph{Lecture Notes
  in Statistics}, vol.~28.
\newblock Springer-Verlag (1985)

\bibitem{amari:87}
Amari, S.: Differential geometry in statistical inference.
\newblock In: Proceedings of the 46th Session of the International Statistical
  Institute, Vol. 2 (Tokyo, 1987), \emph{Bulletin de l'Institut International
  de Statistique}, vol.~52, pp. 321--338 (1987)

\bibitem{amari:87dual}
Amari, S.: Dual connections on the {H}ilbert bundles of statistical models.
\newblock In: Geometrization of statistical theory (Lancaster, 1987), pp.
  123--151. ULDM Publ. (1987)

\bibitem{amari|nagaoka:2000}
Amari, S., Nagaoka, H.: Methods of information geometry.
\newblock American Mathematical Society (2000).
\newblock Translated from the 1993 Japanese original by Daishi Harada

\bibitem{amari:1998natural}
Amari, S.I.: Natural gradient works efficiently in learning.
\newblock Neural Computation \textbf{10}(2), 251--276 (1998).
\newblock \doi{10.1162/089976698300017746}.
\newblock \urlprefix\url{http://dx.doi.org/10.1162/089976698300017746}

\bibitem{ay|jost|le|schwachhofer:2017}
Ay, N., Jost, J., L\^e, H.V., Schwachh\"ofer, L.: Information geometry,
  \emph{Ergebnisse der Mathematik und ihrer Grenzgebiete. 3. Folge. A Series of
  Modern Surveys in Mathematics [Results in Mathematics and Related Areas. 3rd
  Series. A Series of Modern Surveys in Mathematics]}, vol.~64.
\newblock Springer, Cham (2017)

\bibitem{bourbaki:71variete}
Bourbaki, N.: Variétés differentielles et analytiques. Fascicule de
  résultats / Paragraphes 1 à 7.
\newblock No. XXXIII in Éléments de mathématiques. Hermann (1971)

\bibitem{brezis:2011fasspde}
Brezis, H.: Functional analysis, {S}obolev spaces and partial differential
  equations.
\newblock Universitext. Springer, New York (2011)

\bibitem{brown:86}
Brown, L.D.: Fundamentals of statistical exponential families with applications
  in statistical decision theory.
\newblock No.~9 in IMS Lecture Notes. Monograph Series. Institute of
  Mathematical Statistics (1986)

\bibitem{buldygin|kozachenko:2000}
Buldygin, V.V., Kozachenko, Y.V.: Metric characterization of random variables
  and random processes, \emph{Translations of Mathematical Monographs}, vol.
  188.
\newblock American Mathematical Society, Providence, RI (2000).
\newblock Translated from the 1998 Russian original by V. Zaiats

\bibitem{burdet|combe|nencka:2001}
Burdet, G., Combe, P., Nencka, H.: On real {H}ilbertian info-manifolds.
\newblock In: Disordered and complex systems ({L}ondon, 2000), \emph{AIP Conf.
  Proc.}, vol. 553, pp. 153--158. Amer. Inst. Phys. (2001)

\bibitem{docarmo:1992}
do~Carmo, M.P.: Riemannian geometry.
\newblock Mathematics: Theory \& Applications. Birkhäuser Boston Inc. (1992).
\newblock Translated from the second Portuguese edition by Francis Flaherty

\bibitem{cena:2002}
Cena, A.: Geometric structures on the non-parametric statistical manifold.
\newblock Ph.D. thesis, Universit\`a degli Studi di Milano (2002)

\bibitem{cena|pistone:2007}
Cena, A., Pistone, G.: Exponential statistical manifold.
\newblock Ann. Inst. Statist. Math. \textbf{59}(1), 27--56 (2007)

\bibitem{chirco|malago|pistone:2020}
Chirco, G., Malagò, L., Pistone, G.: Lagrangian and hamiltonian mechanics for
  probabilities on the statistical manifold (2020).
\newblock \doi{10.48550/ARXIV.2009.09431}.
\newblock \urlprefix\url{https://arxiv.org/abs/2009.09431}

\bibitem{cover|thomas:2006}
Cover, T.M., Thomas, J.A.: Elements of information theory, second edn.
\newblock Wiley-Interscience [John Wiley \& Sons] (2006)

\bibitem{dawid:75}
Dawid, A.P.: Discussion of a paper by {B}radley {E}fron.
\newblock Ann. Statist. \textbf{3}(6), 1231--1234 (1975)

\bibitem{dawid:1977as}
Dawid, A.P.: Further comments on: ``{S}ome comments on a paper by {B}radley
  {E}fron''\ ({A}nn. {S}tatist. {\bf 3} (1975), 1189--1242).
\newblock Ann. Statist. \textbf{5}(6), 1249 (1977)

\bibitem{efron:1975}
Efron, B.: Defining the curvature of a statistical problem (with applications
  to second order efficiency).
\newblock Ann. Statist. \textbf{3}(6), 1189--1242 (1975).
\newblock With a discussion by C. R. Rao, Don A. Pierce, D. R. Cox, D. V.
  Lindley, Lucien LeCam, J. K. Ghosh, J. Pfanzagl, Niels Keiding, A. P. Dawid,
  Jim Reeds and with a reply by the author

\bibitem{efron:1978}
Efron, B.: The geometry of exponential families.
\newblock Ann. Statist. \textbf{6}(2), 362--376 (1978)

\bibitem{ekeland|temam:1999convex2nd}
Ekeland, I., T\'emam, R.: Convex analysis and variational problems,
  \emph{Classics in Applied Mathematics}, vol.~28, english edn.
\newblock Society for Industrial and Applied Mathematics (SIAM) (1999).
\newblock \doi{10.1137/1.9781611971088}.
\newblock \urlprefix\url{http://dx.doi.org/10.1137/1.9781611971088}.
\newblock Translated from the French

\bibitem{gibilisco:2020}
Gibilisco, P.: $l^p$-unit spheres and the $\alpha$-geometries: Questions and
  perspectives.
\newblock Entropy \textbf{22}(12), 1409 (2020).
\newblock \doi{10.3390/e22121409}.
\newblock \urlprefix\url{https://doi.org/10.3390/e22121409}

\bibitem{gibilisco|pistone:98}
Gibilisco, P., Pistone, G.: Connections on non-parametric statistical manifolds
  by {O}rlicz space geometry.
\newblock IDAQP \textbf{1}(2), 325--347 (1998)

\bibitem{hivarinen:2005}
Hyv\"{a}rinen, A.: Estimation of non-normalized statistical models by score
  matching.
\newblock J. Mach. Learn. Res. \textbf{6}, 695--709 (2005)

\bibitem{jost:2017textbook}
Jost, J.: Riemannian geometry and geometric analysis, seventh edn.
\newblock Universitext. Springer, Cham (2017).
\newblock \doi{10.1007/978-3-319-61860-9}.
\newblock \urlprefix\url{https://doi.org/10.1007/978-3-319-61860-9}

\bibitem{kakutani:1948}
Kakutani, S.: On equivalence of infinite product measures.
\newblock The Annals of Mathematics \textbf{49}(1), 214 (1948).
\newblock \doi{10.2307/1969123}.
\newblock \urlprefix\url{http://dx.doi.org/10.2307/1969123}

\bibitem{kass:89}
Kass, R.E.: The geometry of asymptotic inference (with discussion).
\newblock Statistical Science \textbf{4}, 188--234 (1089)

\bibitem{kass|vos:1997}
Kass, R.E., Vos, P.W.: Geometrical foundations of asymptotic inference.
\newblock Wiley Series in Probability and Statistics: Probability and
  Statistics. John Wiley \& Sons, Inc., New York (1997).
\newblock \doi{10.1002/9781118165980}.
\newblock \urlprefix\url{http://dx.doi.org/10.1002/9781118165980}.
\newblock A Wiley-Interscience Publication

\bibitem{klingenberg:1995}
Klingenberg, W.P.A.: Riemannian geometry, \emph{De Gruyter Studies in
  Mathematics}, vol.~1, second edn.
\newblock Walter de Gruyter \& Co., Berlin (1995).
\newblock \doi{10.1515/9783110905120}.
\newblock \urlprefix\url{https://doi.org/10.1515/9783110905120}

\bibitem{landau|lifshits:1980}
Landau, L.D., Lifshits, E.M.: Course of Theoretical Physics. Statistical
  Physics., vol.~V, 3rd edn.
\newblock Butterworth-Heinemann (1980)

\bibitem{lang:1995}
Lang, S.: Differential and {R}iemannian manifolds, \emph{Graduate Texts in
  Mathematics}, vol. 160, third edn.
\newblock Springer-Verlag (1995)

\bibitem{lott:2008calculations}
Lott, J.: Some geometric calculations on {W}asserstein space.
\newblock Comm. Math. Phys. \textbf{277}(2), 423--437 (2008).
\newblock \doi{10.1007/s00220-007-0367-3}.
\newblock \urlprefix\url{https://doi.org/10.1007/s00220-007-0367-3}

\bibitem{malliavin:1995}
Malliavin, P.: Integration and probability, \emph{Graduate Texts in
  Mathematics}, vol. 157.
\newblock Springer-Verlag (1995).
\newblock With the collaboration of H\'el\'ene Airault, Leslie Kay and G\'erard
  Letac, Edited and translated from the French by Kay, With a foreword by Mark
  Pinsky

\bibitem{musielak:1983}
Musielak, J.: Orlicz spaces and modular spaces, \emph{Lecture Notes in
  Mathematics}, vol. 1034.
\newblock Springer-Verlag (1983)

\bibitem{nomizu|sasaki:94}
Nomizu, K., Sasaki, T.: Affine differential geometry: geometry of affine
  immersions.
\newblock No. 111 in Cambridge Tracts in Mathematics. Cambridge University
  Press (1994)

\bibitem{ogouyandjo|wadagni:2020}
Ogouyandjou, C., Wadagni, N.: Wasserstein riemannian geometry on statistical
  manifold.
\newblock International Electronic Journal of Geometry \textbf{13}(2),
  144–151 (2020).
\newblock \doi{10.36890/iejg.689702}.
\newblock \urlprefix\url{http://dx.doi.org/10.36890/IEJG.689702}

\bibitem{otto:2001}
Otto, F.: The geometry of dissipative evolution equations: the porous medium
  equation.
\newblock Comm. Partial Differential Equations \textbf{26}(1-2), 101--174
  (2001).
\newblock \urlprefix\url{../publications/Riemann.ps}

\bibitem{petersen|mueller:2016}
Petersen, A., M\"{u}ller, H.G.: Functional data analysis for density functions
  by transformation to a hilbert space.
\newblock The Annals of Statistics \textbf{44}(1) (2016).
\newblock \doi{10.1214/15-aos1363}.
\newblock \urlprefix\url{https://doi.org/10.1214/15-aos1363}

\bibitem{pistone:2013Entropy}
Pistone, G.: Examples of the application of nonparametric information geometry
  to statistical physics.
\newblock Entropy \textbf{15}(10), 4042--4065 (2013).
\newblock \doi{10.3390/e15104042}.
\newblock \urlprefix\url{http://dx.doi.org/10.3390/e15104042}

\bibitem{pistone:2013GSI}
Pistone, G.: Nonparametric information geometry.
\newblock In: F.~Nielsen, F.~Barbaresco (eds.) Geometric science of
  information, \emph{Lecture Notes in Comput. Sci.}, vol. 8085, pp. 5--36.
  Springer, Heidelberg (2013).
\newblock First International Conference, GSI 2013 Paris, France, August 28-30,
  2013 Proceedings

\bibitem{pistone:2018}
Pistone, G.: Information geometry of the {G}aussian space.
\newblock In: Information geometry and its applications, \emph{Springer Proc.
  Math. Stat.}, vol. 252, pp. 119--155. Springer, Cham (2018)

\bibitem{pistone:2018lagrange}
Pistone, G.: Lagrangian function on the finite state space statistical bundle.
\newblock Entropy \textbf{20}(2), 139 (2018).
\newblock \doi{10.3390/e20020139}.
\newblock \urlprefix\url{http://www.mdpi.com/1099-4300/20/2/139}

\bibitem{pistone:2020-NPCS}
Pistone, G.: Information geometry of the probability simplex: A short course.
\newblock Nonlinear Phenomena in Complex Systems \textbf{23}(2), 221--242
  (2020)

\bibitem{pistone|rogantin:99}
Pistone, G., Rogantin, M.: The exponential statistical manifold: mean
  parameters, orthogonality and space transformations.
\newblock Bernoulli \textbf{5}(4), 721--760 (1999)

\bibitem{pistone|sempi:95}
Pistone, G., Sempi, C.: An infinite-dimensional geometric structure on the
  space of all the probability measures equivalent to a given one.
\newblock Ann. Statist. \textbf{23}(5), 1543--1561 (1995)

\bibitem{rao:45}
Radhakrishna~Rao, C.: Information and the accuracy attainable in the estimation
  of statistical parameters.
\newblock Bull. Calcutta Math. Soc. \textbf{37}, 81--91 (1945)

\bibitem{rudin:1987-3rd}
Rudin, W.: Real and complex analysis, third edn.
\newblock McGraw-Hill Book Co., New York (1987)

\bibitem{santacroce|siri|trivellato:2016}
Santacroce, M., Siri, P., Trivellato, B.: New results on mixture and
  exponential models by {O}rlicz spaces.
\newblock Bernoulli \textbf{22}(3), 1431--1447 (2016).
\newblock \doi{10.3150/15-BEJ698}.
\newblock \urlprefix\url{https://doi.org/10.3150/15-BEJ698}

\bibitem{santacroce|siri|trivellato:2018}
Santacroce, M., Siri, P., Trivellato, B.: Exponential models by {O}rlicz spaces
  and applications.
\newblock J. Appl. Probab. \textbf{55}(3), 682--700 (2018).
\newblock \doi{10.1017/jpr.2018.45}.
\newblock \urlprefix\url{https://doi.org/10.1017/jpr.2018.45}

\bibitem{schwartz:1981}
Schwartz, L.: Cours d'analyse. 1, second edn.
\newblock Hermann, Paris (1981)

\bibitem{shima:2007}
Shima, H.: The geometry of {H}essian structures.
\newblock World Scientific Publishing Co. Pte. Ltd., Hackensack, NJ (2007).
\newblock \doi{10.1142/9789812707536}.
\newblock \urlprefix\url{http://dx.doi.org/10.1142/9789812707536}

\bibitem{siri|trivellato:2021-SPL}
Siri, P., Trivellato, B.: Robust concentration inequalities in maximal
  exponential models.
\newblock Statistics \& Probability Letters \textbf{170}, 109001 (2021).
\newblock \doi{https://doi.org/10.1016/j.spl.2020.109001}.
\newblock
  \urlprefix\url{http://www.sciencedirect.com/science/article/pii/S0167715220303047}

\bibitem{wainwright:2019-hds}
Wainwright, M.J.: High-dimensional statistics: a non-asymptotic viewpoint.
\newblock Cambridge Series in Statistical and Probabilistic Mathematics.
  Cambridge University Press, Cambridge (2019).
\newblock \doi{10.1017/9781108627771}

\bibitem{weaver:2018-la-2nd-ed}
Weaver, N.: Lipschitz algebras.
\newblock World Scientific Publishing Co. Pte. Ltd., Hackensack, NJ (2018).
\newblock Second edition

\bibitem{weyl:1952}
Weyl, H.: Space- time- matter / by Hermann Weyl.
\newblock Dover, New York (1952).
\newblock Translation of the 1921 RAUM ZEIT MATERIE

\bibitem{cenkov:1982}
Čencov, N.N.: Statistical decision rules and optimal inference,
  \emph{Translations of Mathematical Monographs}, vol.~53.
\newblock American Mathematical Society (1982).
\newblock Translation from the Russian edited by Lev J. Leifman

\end{thebibliography}

\end{document}